\newcommand{\mb}[1]{\mathbb{#1}}
\newcommand{\inj}{\hookrightarrow}
\newcommand{\id}{\text{id}}
\newcommand{\ip}[1]{\langle#1\rangle}
\newcommand{\Hom}{\text{Hom}}
\newcommand{\st}{\text{ : }}
\newcommand{\mc}[1]{\mathcal{#1}}
\newcommand{\G}{\Gamma}
\newcommand{\Q}{\mathbb{Q}}
\newcommand{\R}{\mathbb{R}}
\newcommand{\Z}{\mathbb{Z}}
\newcommand{\F}{\mathbb{F}}
\newcommand{\btz}{\begin{tikzcd}}
\newcommand{\etz}{\end{tikzcd}}
\newcommand{\x}[1]{\xrightarrow{#1}}
\newcommand{\bpm}{\begin{pmatrix}}
\newcommand{\epm}{\end{pmatrix}}
\newcommand{\bg}{\bigg}
\newcommand{\Spec}{\text{Spec}}
\newcommand{\smsh}{\wedge}
\newcommand{\tx}[1]{\text{#1}}
\newcommand{\tb}[1]{\textbf{#1}}
\newcommand{\ti}[1]{\textit{#1}}
\newcommand{\ov}[1]{\overline{#1}}
\newcommand{\aln}[1]{\begin{align*}#1\end{align*}}
\newcommand{\un}[1]{\underline{#1}}
\DeclareMathOperator{\colim}{\text{colim}}
\newtheorem{theorem}{Theorem}[section]
\newtheorem{lemma}[theorem]{Lemma}
\theoremstyle{definition}
\newtheorem{definition}[theorem]{Definition}
\newtheorem{example}[theorem]{Example}
\theoremstyle{remark}
\newtheorem{remark}[theorem]{Remark}
\theoremstyle{construction}
\newtheorem{construction}[theorem]{Construction}
\theoremstyle{corollary}
\newtheorem{corollary}[theorem]{Corollary}
\theoremstyle{proposition}
\newtheorem{proposition}[theorem]{Proposition}
\numberwithin{equation}{section}
\begin{document}
\title{Smashing Localizations in Equivariant Stable Homotopy}

\author{Christian Carrick}
\address{University of California, Los Angeles, Los Angeles, CA 90095}
\email{carrick@math.ucla.edu}





\keywords{Bousfield localization, equivariant homotopy, chromatic homotopy}

\begin{abstract}
We study how smashing Bousfield localizations behave under various equivariant functors. We show that the analogs of the smash product and chromatic convergence theorems for the Real Johnson-Wilson theories $E_{\mathbb{R}}(n)$ hold only after Borel completion. We establish analogous results for the $C_{2^n}$-equivariant Johnson-Wilson theories constructed by Beaudry, Hill, Shi, and Zeng. We show that induced localizations upgrade the available norms for an $N_\infty$-algebra, and we determine which new norms appear. Finally, we explore generalizations of our results on smashing localizations in the context of a quasi-Galois extension of $E_\infty$-rings.
\end{abstract}


\maketitle

\section{Introduction}
Chromatic homotopy theory studies the robust connection between stable homotopy theory and the theory of formal groups. This connection comes from a theorem of Quillen \cite{Quillen}, which gives a refinement of complex cobordism homology $MU_*(-)$ to a functor
\[\mc F:\tx{Spectra}\to\tx{QCoh}(\mc M_{FG})\]
where the latter is the category of quasi-coherent sheaves on $\mc M_{FG}$, the moduli stack of 1-dimensional, commutative formal groups \cite{Naumann}\cite{Goerss}. This functor retains a surprising amount of information about the stable homotopy category. In particular, we may localize at a prime $p$, and $(\mc M_{FG})_{(p)}$ carries a natural filtration by height. The height filtration rigidifies $(\mc M_{FG})_{(p)}$, and a series of conjectures made by Ravenel in \cite{Rav84}, proven in \cite{DHS}\cite{HS}\cite{Ravenel}, establish that this rigidity is reflected quite strongly in the stable homotopy category. We explore some of these conjectures in an equivariant context.


In $\tx{QCoh}((\mc M_{FG})_{(p)})$, there is a natural localization functor $\iota_n^*$ given by restricting sheaves to the open substack $\mc M_{FG}^{\le n}$ of formal groups of height $\le n$, so that
\[\iota_n^*(-)=\mc O_{\mc M_{FG}^{\le n}}\otimes_{\mc O_{\mc M_{FG}}}(-)\]
Bousfield localization of spectra provides a way to, in some sense, lift this localization functor along $\mc F$. The Johnson-Wilson theories, $E(n)$, have the property that, for a spectrum $X$, $L_{E(n)}(X)=0$ if and only if $\iota_n^*(\mc F(X))=0$, and the smash product theorem states that $L_{E(n)}(-)=L_{E(n)}(S^0)\smsh (-)$. The chromatic convergence theorem, says, moreover, that a p-local finite spectrum $X$ is determined by these localizations: the natural map
\[X\to\varprojlim\bg(\cdots\to L_{E(n)}(X)\to L_{E(n-1)}(X)\to\cdots\to L_{E(0)}(X)\bg)\]
is an equivalence, reflecting the fact that finitely presented sheaves in $\tx{QCoh}((\mc M_{FG})_{(p)})$ are determined by their restrictions to each open substack in the height filtration.

$MU$ carries a natural action of $C_2$, the cyclic group of order two, coming from complex conjugation. The resulting $C_2$-spectrum, $MU_\R$, also known as Real bordism theory, was defined by Fujii \cite{Fujii} and Landweber \cite{Landweber} and studied extensively by Hu and Kriz \cite{Hu}, who established a similar connection between genuine $C_2$-spectra and the theory of formal groups. In particular, they define $C_2$-equivariant lifts $E_\R(n)$ of the $E(n)$'s. We establish that the analogous theorems do not hold in genuine $C_2$-spectra for the $E_\R(n)$'s:
\begin{theorem}\label{1.1}
For $n>0$, $L_{E_\R(n)}(-)$ is not smashing. In fact, for $X\in Sp^{C_2}$, 
\[L_{E_\R(n)}(X)\simeq F(E{C_2}_+,L_{E_\R(n)}(S^0)\smsh X)\simeq F(E{C_2}_+,i_*L_{E(n)}(S^0)\smsh X)\]
\end{theorem}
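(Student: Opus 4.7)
The strategy is to show that $L_{E_\R(n)}$ factors through the Borel completion $F(E{C_2}_+,-)$ and then reduce to the non-equivariant smash product theorem for $E(n)$. The key input is the vanishing $\Phi^{C_2}E_\R(n)\simeq 0$, equivalently $\widetilde{EC_2}\smsh E_\R(n)\simeq 0$, which should follow from the computation of $\Phi^{C_2}BP_\R$ (via $\Phi^{C_2}MU_\R\simeq MO$ and the Real orientation structure) together with the fact that inverting the Real class $\ov{v}_n\in\pi_{(2^n-1)\rho}^{C_2}BP_\R$ used to construct $E_\R(n)$ annihilates the geometric fixed points. This is the principal equivariant input.

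Granted this vanishing, $\widetilde{EC_2}\smsh Z$ is $E_\R(n)$-acyclic for every $C_2$-spectrum $Z$, so for any $E_\R(n)$-local $Y$ one has $F(\widetilde{EC_2},Y)\simeq 0$; the cofiber sequence $F(\widetilde{EC_2},-)\to\id\to F(E{C_2}_+,-)$ then forces $Y\simeq F(E{C_2}_+,Y)$. In particular $L_{E_\R(n)}(X)$ is always Borel complete. Next, I would identify the underlying spectrum via $i^*L_{E_\R(n)}(X)\simeq L_{E(n)}(i^*X)$: since $i^*E_\R(n)\simeq E(n)$ and $i^*$ is symmetric monoidal, $i^*$ carries $E_\R(n)$-equivalences to $E(n)$-equivalences, while the projection formula $i_!A\smsh E_\R(n)\simeq i_!(A\smsh E(n))$ combined with the adjunction $i_!\dashv i^*$ shows that $i^*$ also carries $E_\R(n)$-local objects to $E(n)$-local ones.

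The non-equivariant smash product theorem then yields $L_{E(n)}(i^*X)\simeq L_{E(n)}(S^0)\smsh i^*X\simeq i^*(L_{E_\R(n)}(S^0)\smsh X)$, so $L_{E_\R(n)}(X)$ and $L_{E_\R(n)}(S^0)\smsh X$ agree as underlying spectra with $C_2$-action. Since $L_{E_\R(n)}(X)$ is Borel complete and $F(E{C_2}_+,-)$ depends only on this underlying Borel $C_2$-datum, I conclude
\[L_{E_\R(n)}(X)\simeq F(E{C_2}_+,L_{E_\R(n)}(S^0)\smsh X).\]
The second equivalence, replacing $L_{E_\R(n)}(S^0)$ by $i_*L_{E(n)}(S^0)$, follows similarly: both share the same underlying spectrum with compatible $C_2$-action, so their Borel completions coincide. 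Non-smashingness of $L_{E_\R(n)}$ is then immediate, since Borel completion is a nontrivial functor.

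The primary obstacle is the geometric fixed-point vanishing $\Phi^{C_2}E_\R(n)\simeq 0$, which requires a concrete computation in the equivariant homotopy of $BP_\R$. The remaining steps are essentially formal manipulations with equivariant Bousfield localization, restriction-induction adjunctions, and the projection formula, combined with the non-equivariant smash product theorem and the Borel-completeness rigidity.
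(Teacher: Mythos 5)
Your route to the localization formula is in substance the same as the paper's, just unpacked in this special case: the paper first identifies $\ip{E_\R(n)}=\ip{{C_2}_+\smsh E(n)}$ (contractible geometric fixed points plus the underlying Bousfield equivalence) and then applies its general formula for induced localizations, whose proof is precisely your argument that the candidate is cofree, restricts to an $E(n)$-local spectrum, and receives an $E_\R(n)$-equivalence from $X$. Two steps are stated too loosely, though both are repairable. To pass from ``same underlying spectrum'' to ``same Borel completion'' you need an actual map of $C_2$-spectra inducing the underlying equivalence, not an abstract identification of underlying homotopy types: for the first equivalence use the natural map $L_{E_\R(n)}(S^0)\smsh X\to L_{E_\R(n)}(X)$, and for the second you must produce a natural map $i_*L_{E(n)}(S^0)\to L_{E_\R(n)}(S^0)$ (which exists because $\mc Z_{i_*E(n)}\subset\mc Z_{E_\R(n)}$, so $S^0\to i_*L_{E(n)}(S^0)\simeq L_{i_*E(n)}(S^0)$ is an $E_\R(n)$-equivalence into an object that can be further localized). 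The Borel type is the underlying spectrum \emph{together with} its action, and ``compatible $C_2$-action'' is exactly what has to be exhibited.

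The genuine gap is your final sentence. Non-smashingness is \emph{not} immediate from the formula: the identical formula holds for $n=0$, where $L_{E_\R(0)}$ \emph{is} smashing (rational Tate vanishing makes the Borel completion invisible there). What must be shown is that $L_{E_\R(n)}(S^0)\smsh X$ fails to be Borel complete, equivalently $E_\R(n)$-local, for some $X$; by the paper's Corollary 3.22 this reduces to the nonvanishing of
\[
\Phi^{C_2}\bigl(F(E{C_2}_+,i_*L_{E(n)}(S^0))\bigr)\simeq \bigl(L_{E(n)}(S^0)\bigr)^{tC_2},
\]
which is the Hovey--Sadofsky theorem that the Tate construction does not annihilate $L_nS^0$ for $n>0$. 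This is a substantive computational input, not a formal consequence; ``Borel completion is a nontrivial functor'' does not address whether it is trivial on the specific objects $L_{E_\R(n)}(S^0)\smsh X$.
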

\begin{theorem}\label{1.2}
If $X$ is a $2$-local finite $C_{2}$-spectrum, we have a diagram
\[
\btz
X\arrow[r]\arrow[dr]&F(E{C_{2}}_+,X)\arrow[d,"\simeq"]\\
&\varprojlim_n L_{E_\R(n)}(X)
\etz
\]
\end{theorem}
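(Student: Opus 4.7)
The plan is to invoke Theorem \ref{1.1} and reduce to the classical chromatic convergence theorem applied to the underlying spectrum $i^*X$.

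By Theorem \ref{1.1}, for each $n$ we have $L_{E_\R(n)}(X)\simeq F(EC_{2+},L_{E_\R(n)}(S^0)\smsh X)$, and since $F(EC_{2+},-)$ is right adjoint to $-\smsh EC_{2+}$, it preserves inverse limits. Thus
\[\varprojlim_n L_{E_\R(n)}(X)\simeq F\bg(EC_{2+},\,\varprojlim_n(L_{E_\R(n)}(S^0)\smsh X)\bg).\]
The comparison map $F(EC_{2+},X)\to\varprojlim_n L_{E_\R(n)}(X)$ is then induced by the natural map $X\to\varprojlim_n(L_{E_\R(n)}(S^0)\smsh X)$ followed by Borel completion; both source and target are Borel complete.

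Next, I would invoke the standard fact that $F(EC_{2+},-)$ is the Bousfield localization at the underlying equivalences in $Sp^{C_2}$; hence it suffices to check the comparison is an equivalence after applying the underlying spectrum functor $i^*$. Since $i^*$ preserves limits and is symmetric monoidal, and since $i^*L_{E_\R(n)}(S^0)\simeq L_{E(n)}(S^0)$ (as $E_\R(n)$ restricts non-equivariantly to $E(n)$), the relevant map identifies with
\[i^*X\to\varprojlim_n(L_{E(n)}(S^0)\smsh i^*X)\simeq\varprojlim_n L_{E(n)}(i^*X),\]
where the last equivalence is the non-equivariant smash product theorem. Because $X$ is a $2$-local finite $C_2$-spectrum, $i^*X$ is a $2$-local finite spectrum, so this is the chromatic convergence map of Hopkins--Ravenel and hence an equivalence.

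For the commutativity of the triangle, each $L_{E_\R(n)}(X)$ is Borel complete by Theorem \ref{1.1}, so the natural map $X\to\varprojlim_n L_{E_\R(n)}(X)$ factors canonically through $F(EC_{2+},X)$ by universality; the map $X\to F(EC_{2+},X)$ itself is generally not an equivalence, reflecting that $L_{E_\R(n)}(-)$ cannot detect genuine data beyond the Borel completion. The main technical step to verify is the identification $i^*L_{E_\R(n)}(S^0)\simeq L_{E(n)}(S^0)$; given this, the remainder is formal adjointness, monoidality of $i^*$, and the classical chromatic convergence theorem.
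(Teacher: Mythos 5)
Your proposal is correct and follows essentially the same route as the paper: both arguments observe that the target is a limit of Borel complete spectra (hence Borel complete, giving the vertical map), reduce to checking an underlying equivalence via the fact that a map of cofree spectra is an equivalence iff it is one on underlying spectra, and then invoke classical chromatic convergence. The only cosmetic difference is that you identify the underlying tower via Theorem \ref{1.1} together with the nonequivariant smash product theorem, whereas the paper gets the same identification directly from the compatibility of restriction with localization.
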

There is, however, another perspective on these theorems: the chromatic convergence and smash product theorems for the $E_\R(n)$'s do not hold in genuine $C_2$-spectra, but they do hold in Borel $C_2$-spectra. That we need to pass to Borel $C_2$-spectra is perhaps unsurprising because $MU_\R$ itself is Borel complete: it is a theorem of Hu and Kriz that the map
\[MU_\R\to F(E{C_2}_+,MU_\R)\]
is an equivalence in $Sp^{C_2}$, and similarly for $E_\R(n)$. In the case of the nilpotence and thick subcategory theorems, the analogs for Real bordism theory are easily seen to fail in genuine $C_2$-spectra. Passing to Borel $C_2$-spectra, the nilpotence conjecture still fails for $MU_\R$, but the analog of the thick subcategory theorem is more delicate, and we remark on the difficulties in \hyperref[4.3]{4.3}.

In their solution to the Kervaire Invariant One problem \cite{HHR}, Hill, Hopkins, and Ravenel construct genuine $C_{2^n}$-spectra $MU^{((C_{2^n}))}:=N_{C_2}^{C_{2^n}}MU_\R$ that bring Real bordism theory into the $C_{2^n}$-equivariant context. These play an essential role in their proof: their detecting spectrum is a localization of $MU^{((C_8))}$. Recently, Beaudry, Hill, Shi, and Zeng have constructed versions of Johnson-Wilson theories in this context \cite{BHSZ}, which they call $D^{-1}BP^{((G))}\ip{m}$. We give a description of the Bousfield classes of these spectra (\hyperref[4.6]{4.6}) and deduce analogous results:
\begin{theorem}\label{1.3}
Let $E_G(m)$ denote the $C_{2^n}$-spectrum $D^{-1}BP^{((G))}\ip{m}$ constructed in \cite{BHSZ}, where $h=2^{n-1}m$.
\begin{itemize}
\item If $m>0$, then $E_G(m)$ is not smashing. Moreover, for $X\in Sp^{C_{2^n}}$,
\[L_{E_G(m)}(X)\simeq F(E{C_{2^n}}_+,i_*L_{E(h)}(S^0)\smsh X)\]
\item If $X$ is a 2-local finite $C_{2^n}$-spectrum, we have a diagram
\[
\btz
X\arrow[r]\arrow[dr]&F(E{C_{2^n}}_+,X)\arrow[d,"\simeq"]\\
&\varprojlim_m L_{E_G(m)}(X)
\etz
\]
\end{itemize}
\end{theorem}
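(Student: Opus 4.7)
The plan is to follow the same pattern as the proofs of \hyperref[1.1]{Theorem 1.1} and \hyperref[1.2]{Theorem 1.2}, using the Bousfield class identification referenced in \hyperref[4.6]{4.6} as the key new input.

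For the first bullet, I would first argue that $E_G(m)$ is Borel complete, i.e.\ that $E_G(m)\to F(E{C_{2^n}}_+,E_G(m))$ is an equivalence. Since $MU^{((G))}=N_{C_2}^{C_{2^n}}MU_\R$ inherits Borel completeness from $MU_\R$ via the norm, and forming $BP^{((G))}$, inverting the detecting classes $D$, and passing to the $\langle m\rangle$-quotient all preserve this property, $E_G(m)$ is Borel complete. Consequently $L_{E_G(m)}$ factors through the Borel completion functor $F(E{C_{2^n}}_+,-)$. Among Borel $C_{2^n}$-spectra, a Bousfield localization is determined by the underlying Bousfield class, and by \hyperref[4.6]{4.6} the underlying spectrum $i^*E_G(m)$ is Bousfield equivalent to $E(h)$ for $h=2^{n-1}m$. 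Combining these observations with the non-equivariant smash product theorem for $E(h)$ yields
\[L_{E_G(m)}(X)\simeq F(E{C_{2^n}}_+,i_*L_{E(h)}(S^0)\smsh X),\]
and since $F(E{C_{2^n}}_+,-)$ is not the identity on $\tx{Sp}^{C_{2^n}}$, this also shows $L_{E_G(m)}$ fails to be smashing when $m>0$.

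For the second bullet, by the first bullet each $L_{E_G(m)}(X)$ is Borel complete, so the canonical map $X\to\varprojlim_m L_{E_G(m)}(X)$ factors through $F(E{C_{2^n}}_+,X)$ as in the diagram. To see the resulting arrow is an equivalence, I would use that $F(E{C_{2^n}}_+,-)$ preserves limits and depends only on the underlying spectrum with its $C_{2^n}$-action, reducing the problem to the non-equivariant statement
\[i^*X \to \varprojlim_m L_{E(h)}(i^*X).\]
As $m$ grows, $h=2^{n-1}m$ ranges over unboundedly large nonnegative integers, so the right-hand side is a cofinal subtower of the classical chromatic tower for $i^*X$, and the equivalence follows from the non-equivariant chromatic convergence theorem of Hopkins and Ravenel applied to the $2$-local finite spectrum $i^*X$.

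The main obstacle I anticipate is verifying Borel completeness of $E_G(m)$: one must confirm that each step of the construction (norming, extracting the $BP$-summand, inverting $D$, and passing to the $\langle m\rangle$-quotient) preserves Borel completeness starting from $MU_\R$. The interaction of norms with Borel completion is subtle, and the sequential colimit involved in inverting $D$ does not \emph{a priori} commute with $F(E{C_{2^n}}_+,-)$, so some care is needed. Once this is established, however, the rest of the argument is essentially formal and parallel to the $C_2$-case, with $E(h)$ on underlying spectra playing the role of $E(n)$.
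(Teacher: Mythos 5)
Your endgame (reduction to underlying spectra via Corollary \hyperref[3.6]{3.6} and nonequivariant chromatic convergence for the tower $L_{E(h)}$, $h=2^{n-1}m$) matches the paper, but the mechanism you propose for the first bullet has a genuine gap. The paper does not argue via Borel completeness of $E_G(m)$; it establishes the full genuine Bousfield class identification $\ip{E_G(m)}=\ip{{C_{2^n}}_+\smsh E(h)}$ (Corollary \hyperref[4.6]{4.6}), whose two inputs are (a) $\Phi^H(E_G(m))\simeq *$ for every nontrivial $H$ (because $D$ is divisible by norms of classes killed by $\Phi^{C_2}$) and (b) a Landweber-exactness argument identifying $\ip{i^*E_G(m)}=\ip{E(h)}$; Corollary \hyperref[2.3]{2.3} then converts (a)+(b) into the induced Bousfield class, and Propositions \hyperref[3.21]{3.21} and \hyperref[3.23]{3.23} finish. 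Your replacement of input (a) by ``$E_G(m)$ is Borel complete'' does not work: Borel completeness of $E$ does not imply that $\mc Z_E$ is determined by underlying acyclicity, nor that $L_E$ factors through $F(E{G}_+,-)$. The spectrum $F(E{C_{2^n}}_+,S^0)$ is Borel complete with underlying spectrum $S^0$, yet $\tilde{E}C_{2^n}$ is ${C_{2^n}}_+$-acyclic while $\tilde{E}C_{2^n}\smsh F(E{C_{2^n}}_+,S^0)$ has nontrivial geometric fixed points $(S^0)^{tC_{2^n}}$; so a Borel complete spectrum need not be Bousfield equivalent to the induction of its underlying spectrum. What you actually need is the vanishing of geometric fixed points, which is a different (and for your step-by-step approach through the construction, not obviously accessible) statement; in fact the cleanest proof that $E_G(m)$ is cofree goes \emph{through} that vanishing rather than the other way around.

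A second, smaller gap: ``$F(E{C_{2^n}}_+,-)$ is not the identity'' does not by itself show $L_{E_G(m)}$ is not smashing; you must show $L_{E_G(m)}(S^0)\smsh X\to L_{E_G(m)}(X)$ fails to be an equivalence for some $X$. The paper's criterion (Corollaries \hyperref[3.22]{3.22}--\hyperref[3.23]{3.23}) reduces this to the non-vanishing of the Tate construction $(L_{E(h)}(S^0))^{tC_2}$, which is the Hovey--Sadofsky theorem \cite{Hovey} and is where $m>0$ is used. Your write-up omits this input entirely.
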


Our analysis begins with the observation that the $E_\R(n)$'s and $E_G(m)$'s are Bousfield equivalent to certain induced $G$-spectra. We therefore study in general how Bousfield classes in the equivariant context behave under various change of group functors, most of which send smashing Bousfield classes to smashing Bousfield classes (\hyperref[3.12]{3.12}). The exceptional case is that of the induction functor 
\[G_+\smsh_H(-):Sp^H\to Sp^G\]
for a subgroup $H\subset G$. We give a necessary and sufficient condition (\hyperref[3.18]{3.18}, \hyperref[3.19]{3.19}) for a smashing Bousfield class to be preserved by induction, and we find that the above formula for $L_{E_\R(n)}$ is generic in this context (\hyperref[3.20]{3.20}).

\subsection*{Summary} 
In Section \hyperref[sec2]{2}, we review Bousfield localization of $G$-spectra and the relationship between smashing localizations and tensor idempotents. In Section \hyperref[sec3]{3}, we study the interaction between Bousfield localization functors and change of group functors in general, specializing to smashing localizations in \hyperref[sec3.2]{3.2}. 

From here, we move to applications of Section  \hyperref[sec3]{3}, beginning in Section  \hyperref[sec4]{4} with the proofs of \hyperref[1.1]{1.1}, \hyperref[1.2]{1.2}, and \hyperref[1.3]{1.3}, and a remark on analogs of the nilpotence and thick subcategory theorems. Nonequivariantly, the functors $L_{E(n)}$ have the additional remarkable property that, the subcategories of finite $p$-local spectra
\[\mc C_{\ge n}=\{X\in Sp^\omega_{(p)}\st L_{E(n-1)}(X)=0\}\]
form a complete list of the thick tensor ideals in the category of finite $p$-local spectra. A description of the thick tensor ideals in finite $G$-spectra has been given for all $G$ abelian by \cite{BS}\cite{Barthel}, and for $n>0$, none of the thick tensor ideals in finite $C_2$-spectra correspond to $L_{E_\R(n)}$ in an analogous way. For $G=C_{p^n}$, we construct a family of new $G$-spectra $E(\mc J)$ - indexed by the thick tensor ideals $\mc J$ in $(Sp^G)^\omega_{(p)}$ - such that $L_{E(\mc J)}$ is smashing, $\mc J$ is the collection of finite acyclics of $E(\mc J)$, and the geometric fixed points of $E(\mc J)$ at any subgroup is a nonequivariant $E(n)$ (\hyperref[4.10]{4.10}). 

In Section \hyperref[sec5]{5}, we use formulae like the above for $L_{E_\R(n)}$ to observe that induced localizations upgrade the norms available in an $N_\infty$-algebra, and we determine exactly which new norms appear. This generalizes a result of Blumberg and Hill that if $E\in Sp^G$ is a cofree $E_\infty$-ring, it is automatically genuine $G$-$E_\infty$ \cite{BH}. 

Finally, in Section \hyperref[sec6]{6}, we return to the Borel perspective on the main theorems mentioned above. It is a result of \cite{BDS} (upgraded to the level of symmetric monoidal $\infty$-categories by \cite{MNN}) that $Sp$, the category of nonequivariant spectra, is equivalent to the category of modules in $Sp^{C_2}$ over the $E_\infty$-ring $A=F({C_2}_+,S^0)$, so that the coinduction functor becomes restriction of scalars, and the restriction functor becomes extension of scalars. Moreover, extension of scalars induces an equivalence between the category of Borel $C_2$-spectra and $(\tx{Mod}_{Sp^{C_2}}(A))^{hC_2}$.

We show that, by analogy, if $\eta:\mathds{1}\to A$ is a quasi-Galois extension in a symmetric monoidal stable $\infty$-category (\hyperref[6.4]{6.4}), it is often possible to use a norm construction to take a smashing $A$-module $M$ and produce a smashing object in the category of $A$-locals. This is equivalent to producing, as in \hyperref[1.1]{1.1}, a smashing-then-complete type localization formula for $\eta^*M$. We give a necessary and sufficient condition for this localization to be smashing in the category of $A$-modules (\hyperref[6.15]{6.15}).


\subsection*{Notation and Conventions} Unless explicitly stated otherwise, $G$ is a finite group. We will use the letters $H$ and $K$ to denote subgroups. $X,Y$, and $E$ will be used to denote a $G$-spectrum, and $Z$ will be used when referring to an acyclic. $Sp^G$ will denote the category of orthogonal $G$-spectra, and $(Sp^G)^\omega$ its subcategory of compact objects. We use the term ring spectrum to refer to a monoid in $Ho(Sp^G)$ and $[-,-]^G$ will denote morphisms in $Ho(Sp^G)$. If $H\subset G$ and $g\in G$, $^gH:=gHg^{-1}$.
\subsection*{Acknowledgments} We thank Mike Hill for suggesting the project and for his constant guidance and support. We would also like to thank Paul Balmer for many helpful conversations.


\section{Equivariant Bousfield Classes}\label{sec2}
In this section, we review what we need from equivariant Bousfield localization following \cite{Hill} and smashing localizations following \cite{Balmer}.
\subsection{Equivariant categories of acyclics}\label{sec2.1} We begin with a review of the characterization of acyclics in an equivariant context given in \cite{Hill}.
\begin{definition}\label{2.1}
If $E$ is a $G$-spectrum, we let $\mc{Z}_E$ denote the category of $E$-acyclics: the full subcategory of $Sp^G$ consisting of all $Z$ such that $E\smsh Z$ is equivariantly contractible. We let $\mc{L}_E$ denote the category of $E$-locals: the full subcategory of $Sp^G$ consisting of all $X$ such that $Sp^G(Z,X)\simeq*$ for all $Z\in\mc{Z}_E$. We say $E,F\in Sp^G$ are Bousfield equivalent (denoted $\ip{E}=\ip{F}$) if $\mc Z_E=\mc Z_F$.
\end{definition}
Since the geometric fixed point functors $\Phi^H$ are symmetric monoidal and jointly conservative, this gives us a concrete way to describe $\mc{Z}_E$:
\begin{proposition}\label{2.2}$($\cite{Hill}\textnormal{, Proposition 3.2}$)$ If $Z\in Sp^G$, then $Z\in\mc{Z}_E$ if and only if $\Phi^H(Z)\in \mc{Z}_{\Phi^H(E)}$ for all subgroups $H\subset G$:
\[\mc{Z}_E=\bigcap\limits_{H\subset G}(\Phi^H)^{-1}(\mc Z_{\Phi^H(E)})\]
\end{proposition}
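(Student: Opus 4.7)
The plan is to reduce the statement directly to the two properties of geometric fixed points noted immediately before the proposition, namely that each $\Phi^H$ is symmetric monoidal and that the collection $\{\Phi^H\}_{H\subset G}$ is jointly conservative on $Sp^G$.

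First, I would unpack the definition: $Z\in\mc Z_E$ means $E\smsh Z\simeq *$ in $Sp^G$. The key observation is that a $G$-spectrum $W$ is equivariantly contractible if and only if $\Phi^H(W)\simeq *$ for every subgroup $H\subset G$. The ``only if'' direction is obvious since $\Phi^H$ preserves the zero object, while the ``if'' direction is precisely joint conservativity applied to the map $0\to W$ (or equivalently the identity of $W$).

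Applying this with $W=E\smsh Z$, we see $Z\in\mc Z_E$ if and only if $\Phi^H(E\smsh Z)\simeq *$ for every $H\subset G$. Symmetric monoidality of $\Phi^H$ gives a natural equivalence
\[\Phi^H(E\smsh Z)\simeq \Phi^H(E)\smsh\Phi^H(Z),\]
so the right-hand condition is equivalent to $\Phi^H(Z)\in\mc Z_{\Phi^H(E)}$ for every $H$, which is the desired conclusion; rewriting set-theoretically yields the displayed intersection.

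Honestly, there is no real obstacle here: the entire content is the two stated properties of $\Phi^H$. The only point requiring any care is articulating why ``$E\smsh Z$ is equivariantly contractible'' can be tested geometric-fixed-pointwise, and this is exactly joint conservativity. One could alternatively avoid invoking it explicitly by testing contractibility on equivariant homotopy groups $\pi_*^H$ and using the tom Dieck-style comparison with geometric fixed points, but this would be a detour; the conservativity formulation is cleaner.
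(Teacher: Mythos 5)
Your proof is correct and is exactly the argument the paper intends: the statement is quoted from \cite{Hill} with the one-line justification that the $\Phi^H$ are symmetric monoidal and jointly conservative, and your write-up simply fleshes out that reduction. Nothing further is needed.
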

\begin{corollary}\label{2.3}
\cite{Hill} Suppose $E\in Sp^{G}$ has the property that $\Phi^{H}(E)\simeq*$ for all $H\subset G$ nontrivial, then $\mc{Z}_E=(\Phi^{\{e\}})^{-1}(\mc{Z}_{\Phi^{\{e\}}(E)})$. That is, $Z\in Sp^{G}$ is $E$-acyclic if and only if its underlying spectrum is $\Phi^{\{e\}}(E)$-acyclic.
\end{corollary}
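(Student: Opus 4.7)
The plan is to deduce this directly from Proposition 2.2, by showing that for nontrivial $H$, the intersected factor $(\Phi^H)^{-1}(\mc{Z}_{\Phi^H(E)})$ is simply all of $Sp^G$, so that the intersection collapses to the term indexed by $H=\{e\}$.

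First I would observe the elementary fact that the category of acyclics for a contractible spectrum is the whole ambient category: if $F \simeq *$, then $F \smsh Z \simeq *$ for every $Z$, hence $\mc{Z}_F$ equals the entire category. Applying this to $F = \Phi^H(E)$ for each nontrivial $H \subset G$, the hypothesis gives $\mc{Z}_{\Phi^H(E)} = Sp$ (the appropriate target of $\Phi^H$), and therefore
\[
(\Phi^H)^{-1}(\mc{Z}_{\Phi^H(E)}) = (\Phi^H)^{-1}(Sp) = Sp^G
\]
for every nontrivial subgroup $H$.

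Next I would plug this into the formula from Proposition 2.2. Since intersecting with $Sp^G$ does nothing, all terms indexed by nontrivial $H$ drop out of the intersection $\bigcap_{H \subset G} (\Phi^H)^{-1}(\mc{Z}_{\Phi^H(E)})$, leaving only the term for $H = \{e\}$. This yields the desired identification $\mc{Z}_E = (\Phi^{\{e\}})^{-1}(\mc{Z}_{\Phi^{\{e\}}(E)})$, whose unpacking is exactly the second sentence of the corollary: a $G$-spectrum $Z$ is $E$-acyclic if and only if its underlying spectrum is $\Phi^{\{e\}}(E)$-acyclic.

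There is no real obstacle here — the proof is essentially a one-liner given Proposition 2.2. The only point worth flagging is the convention that $\Phi^H(E) \simeq *$ genuinely forces every spectrum into $\mc{Z}_{\Phi^H(E)}$, which is immediate from the definition of Bousfield acyclics.
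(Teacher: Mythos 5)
Your argument is correct and is exactly the intended deduction: the paper states this as an immediate corollary of Proposition \hyperref[2.2]{2.2}, and collapsing the intersection by noting that $\mc{Z}_{\Phi^H(E)}$ is everything when $\Phi^H(E)\simeq*$ is the whole content. Nothing to add.
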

From this, we deduce a useful characterization of the Bousfield classes of the Real Johnson-Wilson theories introduced by Hu-Kriz \cite{Hu} and studied extensively by Kitchloo-Wilson \cite{Kitchloo}.
\begin{example}\label{2.4}
Let $E_\R(n)$ denote the $n$-th Real Johnson-Wilson theory, $E_{(k,\mb G)}$ the Lubin Tate theory associated to a perfect field $k$ of characteristic 2 and $\mb G$ a height $n$ formal group over $k$, regarded as a $C_2$-spectrum as in  \cite{Hahn}, and $E(n)$ the usual nonequivariant Johnson-Wilson theory. Then
\[\ip{E_\R(n)}=\ip{E_{(k,\mb G)}}=\ip{{C_2}_+\smsh E(n)}\]
\end{example}

\begin{proof}
These three $C_2$-spectra all have contractible geometric fixed points, and the Bousfield classes of their underlying spectra agree.
\end{proof} 

\subsection{Smashing spectra and idempotent triangles}\label{sec2.2} We review the theory of smashing localizations - for more details see \cite{Balmer},\cite{Lurie},\cite{Rav84}, and \cite{Ravenel}. We first recall the following basic fact about Bousfield localization that we will use repeatedly.
\begin{lemma}\label{2.5}
If $E\in Sp^G$ is a ring spectrum, then any module $M$ over $E$ (e.g. $E$ itself) is $E$-local.
\end{lemma}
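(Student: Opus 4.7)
The plan is to exploit the fact that an $E$-module is naturally a retract of its $E$-smash. Concretely, if $M$ is an $E$-module with structure map $\mu : E \smsh M \to M$ and $E$ has unit $\eta : S^0 \to E$, then one of the module axioms is that the composite
\[
M \xrightarrow{\eta \smsh \id_M} E \smsh M \xrightarrow{\mu} M
\]
is the identity. So $M$ sits as a retract of $E \smsh M$ in $Ho(Sp^G)$.

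The next step is to show that for any $E$-acyclic $Z \in \mc Z_E$, every map $f : Z \to M$ in $Ho(Sp^G)$ is null. Given such an $f$, I would use naturality of the unit to rewrite
\[
f = \mu \circ (\eta \smsh \id_M) \circ f = \mu \circ (\id_E \smsh f) \circ (\eta \smsh \id_Z),
\]
exhibiting $f$ as a factorization through $\eta \smsh \id_Z : Z \to E \smsh Z$. Since $Z$ is $E$-acyclic, $E \smsh Z \simeq \ast$, so this factorization is null, and hence $f = 0$.

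To upgrade from $[Z,M]^G = 0$ to the statement that the mapping spectrum $F(Z,M)$ is contractible (which is what $E$-locality really requires), I would apply the same argument to every suspension $\Sigma^n Z$, noting that $\mc Z_E$ is closed under suspensions since $E \smsh \Sigma^n Z \simeq \Sigma^n(E \smsh Z) \simeq \ast$. This shows $\pi_n F(Z,M)^G = [Z, \Sigma^{-n} M]^G = 0$ for all $n$, and analogously for fixed points at all subgroups $H \subset G$ (since $M$ is also a module over $E$ as an $H$-spectrum via restriction), which gives equivariant contractibility.

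There is essentially no obstacle here: the argument is purely formal and uses only the module axioms together with the definition of an acyclic. The only mild subtlety is making sure one is careful about working in the equivariant homotopy category rather than with strict module structures, but since we only need the retract identity up to homotopy and the category $Ho(Sp^G)$ is a symmetric monoidal triangulated category in which $\smsh$ is exact in each variable, the argument goes through verbatim from the nonequivariant case.
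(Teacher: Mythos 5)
Your proposal is correct and is essentially the same argument as in the paper: both factor a map $f:Z\to M$ from an acyclic through $E\smsh Z\simeq *$ via the unit and the module action. The extra care you take in upgrading $[Z,M]^G=0$ to contractibility of the mapping spectrum (via closure of $\mc Z_E$ under suspension and restriction) is a detail the paper leaves implicit, but the core idea is identical.
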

\begin{proof}
Let $Z\in\mc Z_E$, then any map $f:Z\to M$ factors as follows
\[
\btz
Z\arrow[r,"f"]\arrow[d]&M\\
E\smsh Z\arrow[r,"1_E\smsh f"]&E\smsh M\arrow[u,"\mu_M"]
\etz
\]
but then $E\smsh Z\simeq*$, hence $f$ is null.
\end{proof}
\begin{definition}\label{2.6}
For $E\in Sp^G$, let $L_E$ denote the corresponding Bousfield localization functor. We say that $L_E$ is a smashing localization or that $E$ is a smashing $G$-spectrum if the natural map
\[L_E(S^0)\smsh X\to L_E(S^0)\smsh L_E(X)\to L_E(X)\]
is an equivalence for all $X\in Sp^G$.
\end{definition}
Recall that Bousfield localization at $E$ determines for each $X\in Sp^G$ a cofiber sequence
\[Z_E(X)\x{\psi_X} X\x{\phi_X} L_E(X)\]
with $Z_E(X)\in\mc Z_E$ and $L_E(X)\in \mc L_E$, which is unique up to homotopy with respect to these properties.
\begin{proposition}\label{2.7}
The following characterizations of smashing localizations are equivalent:
\begin{enumerate}
\item $L_E$ is smashing.
\item $\mc L_E$ is closed under homotopy colimits.
\item $\mc L_E$ is closed under arbitrary coproducts.
\item $\mc L_E$ is a smash ideal. That is $X\in\mc L_E$, $Y\in Sp^G\implies X\smsh Y\in\mc L_E$.
\item If $R\in\mc L_E$ is a ring spectrum, every $R$-module is in $\mc L_E$.
\item $\ip{E}=\ip{L_E(S^0)}$
\end{enumerate}
\end{proposition}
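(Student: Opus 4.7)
The plan is to establish the equivalences by a cycle of implications, organized into two loops sharing the vertex (1): first $(1) \iff (4) \Rightarrow (5) \Rightarrow (6) \Rightarrow (1)$, covering the smash-ideal, ring-module, and Bousfield-class characterizations, then $(1) \Rightarrow (2) \Rightarrow (3) \Rightarrow (1)$ for the colimit conditions. The unifying observation I will use throughout is that smashing the cofiber sequence $Z_E(S^0) \to S^0 \to L_E(S^0)$ with any $X \in Sp^G$ produces $Z_E(S^0) \smsh X \to X \to L_E(S^0) \smsh X$ whose first term is $E$-acyclic (since $\mc Z_E$ is tautologically a smash ideal in $Sp^G$), so $X \to L_E(S^0) \smsh X$ is always an $E$-equivalence and verifying smashing reduces to showing $L_E(S^0) \smsh X \in \mc L_E$ for every $X$.

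For the first loop: $(1) \iff (4)$ falls out of the reduction above — given (1) and $X \in \mc L_E$, we get $L_E(X \smsh Y) \simeq L_E(S^0) \smsh X \smsh Y \simeq X \smsh Y$, so $X \smsh Y \in \mc L_E$, while (4) directly forces $L_E(S^0) \smsh X \in \mc L_E$ since $L_E(S^0)$ is local. For $(4) \Rightarrow (5)$, any $R$-module $M$ is a retract of $R \smsh M$ via the unit and action, and $\mc L_E$ is closed under retracts, being defined by a mapping-out vanishing condition. For $(5) \Rightarrow (6)$, I first note that $L_E$ is a symmetric monoidal localization because $\mc Z_E$ is closed under smashing with any object, so $L_E(S^0)$ carries a ring structure; then for $Z \in \mc Z_E$, the cofiber sequence above with $X=Z$ has its first two terms $E$-acyclic and the third both acyclic and — by (5) applied to the $L_E(S^0)$-module $L_E(S^0) \smsh Z$ — local, hence null, yielding $\mc Z_E \subset \mc Z_{L_E(S^0)}$; the reverse inclusion is immediate from the same cofiber sequence ($L_E(S^0) \smsh Z \simeq *$ forces $Z \simeq Z_E(S^0) \smsh Z \in \mc Z_E$). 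For $(6) \Rightarrow (1)$, Lemma 2.5 shows $L_E(S^0) \smsh X$ is $L_E(S^0)$-local, and (6) upgrades this to $E$-local.

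For the second loop, $(1) \Rightarrow (2)$ holds because smashing with $L_E(S^0)$ has right adjoint $F(L_E(S^0), -)$ and thus preserves all colimits, and $(2) \Rightarrow (3)$ is trivial. The main obstacle is $(3) \Rightarrow (1)$, where closure under coproducts alone is \emph{a priori} much weaker than smashing. Here I would invoke the idempotent-triangle formalism of \cite{Balmer}: closure under coproducts combined with the automatic closure under cofibers (any stable reflective subcategory is cofiber-closed) implies $\mc L_E$ absorbs all small colimits, so $L_E$ preserves colimits. The next step is to promote the ring structure on $L_E(S^0)$ to a smash idempotent, i.e.\ to verify $Z_E(S^0) \smsh L_E(S^0) \simeq *$ so that $Z_E(S^0) \to S^0 \to L_E(S^0)$ becomes an idempotent triangle in Balmer's sense; this is the step where the colimit-preservation of $L_E$ is essential, since it lets one upgrade the section $L_E(S^0) \to L_E(S^0) \smsh L_E(S^0)$ coming from the unit to an equivalence. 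Balmer's uniqueness then forces $L_E \simeq L_E(S^0) \smsh (-)$, closing the loop.
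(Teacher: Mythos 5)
Your first loop $(1)\Leftrightarrow(4)\Rightarrow(5)\Rightarrow(6)\Rightarrow(1)$ is correct and is essentially the paper's own argument: the same retract argument for $(4)\Rightarrow(5)$, the same ``acyclic module over a local ring is both acyclic and local, hence null'' argument for $(5)\Rightarrow(6)$, and the same use of \hyperref[2.5]{Lemma 2.5} for $(6)\Rightarrow(1)$. Your justification of the ``clear'' inclusion $\mc Z_{L_E(S^0)}\subset\mc Z_E$ via the cofiber sequence is a nice addition. The difference is in the second loop: the paper simply cites \cite{Lurie} for $(1)\Leftrightarrow(2)\Leftrightarrow(3)$, whereas you attempt to prove it, and your $(1)\Rightarrow(2)\Rightarrow(3)$ is fine.

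The gap is in $(3)\Rightarrow(1)$. Two problems. First, you assert that colimit-preservation of $L_E$ ``lets one upgrade the section $L_E(S^0)\to L_E(S^0)\smsh L_E(S^0)$ to an equivalence,'' but you give no mechanism, and I do not see one: the cofiber of that map is $\Sigma Z_E(S^0)\smsh L_E(S^0)$, which is $E$-acyclic and is $L_E(S^0)$-local (as an $L_E(S^0)$-module), but to conclude it vanishes you need it to be $E$-local, which is exactly the kind of statement you are trying to prove. Second, and more seriously, even granting the idempotency $Z_E(S^0)\smsh L_E(S^0)\simeq *$, Balmer's correspondence (\hyperref[2.10]{2.10}, \hyperref[2.11]{2.11}) only identifies $L_E(S^0)\smsh(-)$ as the smashing localization at the Bousfield class $\ip{L_E(S^0)}$; to conclude $L_E$ is smashing you still need $\ip{E}=\ip{L_E(S^0)}$, i.e.\ that $Z\smsh L_E(S^0)\simeq*$ for \emph{every} acyclic $Z$, and idempotency only gives this for $Z=Z_E(S^0)$ and the localizing ideal it generates. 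The standard repair is a cell-induction on the other variable: the full subcategory $\{X : L_E(S^0)\smsh X\in\mc L_E\}$ is localizing (this is where (3) enters), and it contains every orbit $G/H_+$ because $L_E(S^0)\smsh G/H_+\simeq G_+\smsh_H i^G_H L_E(S^0)$ is local by the induction--restriction adjunction together with \hyperref[3.2]{3.2}(2); hence it is all of $Sp^G$, and since $X\to L_E(S^0)\smsh X$ is always an $E$-equivalence, $L_E$ is smashing. Note the equivariant wrinkle: the localizing subcategory generated by $S^0$ alone is not all of $Sp^G$, so checking the condition on the unit (which is what your idempotent-triangle strategy amounts to) genuinely does not suffice here.
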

\begin{proof}
For $1\iff 2\iff 3$ see \cite{Lurie}. We show $1\implies 4\implies 5\implies 6\implies 1$: If $L_E$ is smashing, then if $X\in \mc L_E$ and $Y\in Sp^G$, 
\[X\smsh Y\simeq L_E(X)\smsh Y\simeq L_E(S^0)\smsh X\smsh Y\simeq L_E(X\smsh Y)\in \mc L_E\]
If $\mc L_E$ is a smash ideal, $R\in\mc L_E$ is a ring spectrum, and $M$ is an $R$-module, then $M$ is a retract of $R\smsh M$, which must be local, and $\mc L_E$ is closed under retracts. Note that $L_E$ is lax monoidal (on the level of the homotopy category), hence $L_E(S^0)$ is a ring spectrum in $\mc L_E$. $\mc Z_{L_E(S^0)}\subset\mc Z_E$ is clear, and assuming (5), $Z\in\mc Z_E$ implies that $Z\smsh L_E(S^0)\in\mc Z_E$, and as a module over $L_E(S^0)$, $Z\smsh L_E(S^0)\in\mc L_E$, hence $Z\smsh L_E(S^0)\simeq*$, i.e. $Z\in\mc Z_{L_E(S^0)}$. Now, since for any $X\in Sp^G$, $X\to L_E(S^0)\smsh X$ becomes an equivalence after smashing with $E$, to show $L_E$ is smashing, it suffices to show $L_E(S^0)\smsh X\in\mc L_E$. But since $L_E(S^0)$ is a ring spectrum, $L_E(S^0)\smsh X\in\mc L_{L_E(S^0)}$ by \hyperref[2.5]{2.5}, but $\mc L_{L_E(S^0)}=\mc L_{E}$, assuming (6).
\end{proof}

We will prefer characterization (6), as it is the only one that is phrased as a condition on the category of $E$-acyclics, rather than $E$-locals. Smashing localizations were studied in a more general setting by Balmer and Favi in \cite{Balmer}, and we recall here some of their definitions and results.
\begin{definition}(\cite{Balmer}, Definition 3.2)\label{2.8}
Let $(\mc T,\otimes,\mathds{1})$ be a tensor-triangulated (tt-) category (e.g. $Ho(Sp^G)$). We say that a distinguished triangle in $\mc T$ of the form
\[e\x{\psi}\mathds{1}\x{\phi} f\to\Sigma e\]
is an idempotent triangle if it satisfies any of the following equivalent conditions:
\begin{enumerate}
\item $e\otimes f=0$
\item $(1_e\otimes\psi):e\otimes e\to e$ is an isomorphism. (Left Idempotent)
\item $(1_f\otimes\phi):f\to f\otimes f$ is an isomorphism. (Right Idempotent)
\end{enumerate}
\end{definition}
The relationship between idempotent triangles and smashing localizations is as follows.
\begin{definition}\cite{Balmer}\label{2.9}
Let $\mc T$ be a tt-category and $\mc J\subset \mc T$ a thick tensor ideal. We define
\[J^\perp=\{t\in\mc T\st \Hom_{\mc T}(z,t)=0\tx{ for all }z\in\mc J\}\]
We say that $\mc J$ is a Bousfield ideal if for every $t\in\mc T$, there exists a distinguished triangle
\[e_t\to t\to f_t\to\Sigma f_t\]
such that $e_t\in\mc J$ and $f_t\in\mc J^\perp$. We say that $\mc J$ is a smashing ideal if $\mc J^\perp$ is a tensor ideal.
\end{definition}
\begin{theorem}\label{2.10}
$($\cite{Balmer}\textnormal{, Theorem 3.5}$)$
If $(\mc T,\otimes,\mathds{1})$ is a rigidly-compactly generated tt-category, there is a 1-1 correspondence between isomorphism classes of idempotent triangles and smashing ideals in $\mc T$, wherein $\mc J$ as above corresponds to the triangle
\[e_{\mathds{1}}\to\mathds{1}\to f_\mathds{1}\to\Sigma e_{\mathds{1}} \]
and an idempotent triangle
\[e\to \mathds{1}\to f\to\Sigma e\]
corresponds to the smashing ideal $\ker(-\otimes f)$.
\end{theorem}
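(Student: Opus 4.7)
The plan is to construct mutually inverse assignments between smashing ideals and isomorphism classes of idempotent triangles: a smashing ideal $\mc J$ is sent to its Bousfield triangle for the unit $\mathds{1}$, and an idempotent triangle $e \to \mathds{1} \to f$ is sent to the thick tensor ideal $\ker(-\otimes f)$. I will check that each output is well-formed, and then argue that the two constructions are mutual inverses.

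For the forward direction, given a smashing ideal $\mc J$, I would take the Bousfield triangle $e_\mathds{1} \to \mathds{1} \to f_\mathds{1}$; rigid-compact generation is what ensures that smashing ideals in this sense are Bousfield, so this triangle exists. To verify it is idempotent, I would show $e_\mathds{1}\otimes f_\mathds{1} \simeq 0$. Since $\mc J$ is a tensor ideal and $e_\mathds{1}\in\mc J$, the smash lies in $\mc J$; since $\mc J^\perp$ is a tensor ideal by the smashing hypothesis and $f_\mathds{1}\in\mc J^\perp$, the smash also lies in $\mc J^\perp$. Any object in $\mc J\cap\mc J^\perp$ has vanishing identity map and must be zero.

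For the reverse direction, starting with an idempotent triangle $e \to \mathds{1} \to f$ and setting $\mc J = \ker(-\otimes f)$, I get a thick tensor ideal for free. The heart of the argument is to show that for every $X$, the smashed triangle $e\otimes X \to X \to f\otimes X$ is the Bousfield triangle of $X$ relative to $\mc J$. The term $e\otimes X$ lies in $\mc J$ since $e\otimes f\simeq 0$. To show $f\otimes X\in\mc J^\perp$, I would pick $z\in\mc J$ and $g\colon z\to f\otimes X$ and examine the naturality square
\[
\btz
z \arrow[r,"g"] \arrow[d,"\phi\otimes 1_z"'] & f\otimes X \arrow[d,"\phi\otimes 1_{f\otimes X}"] \\
f\otimes z \arrow[r,"1_f\otimes g"'] & f\otimes f\otimes X
\etz
\]
The lower-left object vanishes since $z\in\mc J$, so the bottom-then-left composite is zero; but the right vertical map is an isomorphism by the idempotent property (using symmetry of $\otimes$), and hence $g=0$. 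Thus $\mc J^\perp$ contains every object of the form $f\otimes X$ and is closed under $\otimes$, so $\mc J$ is smashing.

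Finally, the two assignments are mutually inverse: starting from $\mc J$, the calculation in the previous paragraph applied to its Bousfield triangle for $\mathds{1}$ shows that smashing with $X$ reproduces the Bousfield triangle for $X$, so $X\in\mc J$ iff $f_\mathds{1}\otimes X\simeq 0$, giving $\mc J = \ker(-\otimes f_\mathds{1})$. Conversely, starting from an idempotent triangle $e\to\mathds{1}\to f$ and forming $\mc J = \ker(-\otimes f)$, the uniqueness (up to isomorphism) of Bousfield triangles recovers the original triangle, since it already satisfies the defining property for $\mathds{1}$. I expect the main obstacle to be the naturality square above: this is the single place where both the acyclicity $z\otimes f\simeq 0$ and the idempotent condition on $f$ must be combined, and it is what forces locality in $\mc J^\perp$ to be closed under arbitrary smash products with $X$.
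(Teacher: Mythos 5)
The paper does not actually prove this statement — it is quoted from Balmer--Favi (Theorem 3.5) — and your argument is correct and is essentially the proof given in that reference: the Bousfield triangle of the unit is idempotent because $e_{\mathds 1}\otimes f_{\mathds 1}\in\mc J\cap\mc J^{\perp}=0$, and conversely tensoring an idempotent triangle with $X$ yields the Bousfield decomposition of $X$ for $\ker(-\otimes f)$, the key point being exactly your naturality square combined with the right-idempotent condition. Two small points to tighten. First, the existence of the triangle $e_{\mathds 1}\to\mathds 1\to f_{\mathds 1}$ is part of the definition of a smashing ideal (Balmer--Favi require smashing ideals to be Bousfield ideals), rather than a consequence of rigid-compact generation, so your parenthetical justification there is misplaced even though nothing breaks. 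Second, the assertion that $\mc J^{\perp}$ is closed under $\otimes$ deserves one more line: for $Y\in\mc J^{\perp}$ the map $e\otimes Y\to Y$ is zero (its source lies in $\mc J$), so the triangle splits and $Y$ is a retract of $f\otimes Y$; hence $Y\otimes X$ is a retract of $f\otimes(Y\otimes X)$, which you have already shown lies in $\mc J^{\perp}$.
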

\begin{corollary}\label{2.11}
If $(\mc T,\otimes,\mathds{1})=(Ho(Sp^G),\smsh,S^0)$, there is a 1-1 correspondence between isomorphism classes of idempotent triangles in $\mc T$ and smashing Bousfield classes $\ip{E}$, and hence also between smashing ideals in $Ho(Sp^G)$ and smashing Bousfield classes.
\end{corollary}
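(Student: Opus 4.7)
The plan is to use Theorem 2.10 together with characterization (6) of Proposition 2.7 to promote the correspondence between smashing ideals and idempotent triangles to one between smashing Bousfield classes and either side. First I would note that $Ho(Sp^G)$ is rigidly-compactly generated (the orbits $G/H_+$ are compact, rigid, and generate), so Theorem 2.10 applies. It then suffices to produce a bijection between isomorphism classes of idempotent triangles and smashing Bousfield classes; the statement about smashing ideals follows by composition.

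In one direction, given a smashing Bousfield class $\ip{E}$, I would assign the Bousfield triangle
\[Z_E(S^0)\x{\psi_{S^0}}S^0\x{\phi_{S^0}} L_E(S^0).\]
This is idempotent in the sense of Definition 2.8 because
$Z_E(S^0)\smsh L_E(S^0)\in\mc Z_E\cap\mc L_E=0$, by Lemma 2.5 (or directly: a local acyclic is null). In the other direction, given an idempotent triangle $e\to\mathds{1}\to f$, I would assign the class $\ip{f}$. To see this is smashing, I would verify (using Proposition 2.7(6)) that $L_f(S^0)\simeq f$, i.e.\ that the given triangle is already the Bousfield triangle for $f$. The cofiber $\Sigma e$ is $f$-acyclic since $e\smsh f\simeq 0$, so it remains only to show $f$ is $f$-local: for $Z\in\mc Z_f$ and any map $g:Z\to f$, the right-idempotent property $1_f\smsh\phi:f\simeq f\smsh f$ lets me factor $g$ through $Z\smsh f\simeq 0$, so $g=0$.

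To check the two assignments are mutually inverse, in one direction I use that $\ip{E}=\ip{L_E(S^0)}$ for any smashing class (Proposition 2.7(6)), so starting from $\ip{E}$, passing to the idempotent triangle, and reading off the class of the cofiber recovers $\ip{E}$. Conversely, starting from an idempotent triangle $e\to\mathds{1}\to f$, the identification $L_f(S^0)\simeq f$ above together with the uniqueness of the Bousfield triangle gives back the original triangle up to isomorphism. Since any $E'$ with $\ip{E'}=\ip{E}$ yields the same localization functor $L_{E'}=L_E$ and hence the same triangle, the construction is well-defined on Bousfield classes.

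Finally, composing this bijection with that of Theorem 2.10, which sends an idempotent triangle $e\to\mathds{1}\to f$ to the smashing ideal $\ker(-\smsh f)=\mc Z_f$, produces the correspondence between smashing ideals and smashing Bousfield classes, matching $\ip{E}$ with $\mc Z_E$. The only real subtlety is the verification that $f$ is $f$-local, which is the step that genuinely uses the idempotent structure rather than formal properties of localization.
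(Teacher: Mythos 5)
Your proposal is correct and follows essentially the same route as the paper: assign to a smashing class its Bousfield triangle (idempotent because the local acyclic $Z_E(S^0)\smsh L_E(S^0)$ vanishes), and conversely use the right-idempotent structure to make $f$ a ring spectrum, deduce via Lemma \hyperref[2.5]{2.5} that $f$ is $f$-local, and identify the given triangle with the Bousfield triangle of $f$, with mutual inversion supplied by Proposition \hyperref[2.7]{2.7}(6). No gaps.
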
 
\begin{proof}
Each smashing $\ip{E}$ determines the idempotent triangle
\[Z_E(S^0)\to S^0\to L_E(S^0)\to\Sigma Z_E(S^0)\]
as $L_E(S^0)\smsh Z_E(S^0)\simeq L_E(Z_E(S^0))\simeq*$. Conversely, if $e\to S^0\to f\to\Sigma e$ is an idempotent triangle, then it follows that it is isomorphic to
\[Z_f(S^0)\to S^0\to L_f(S^0)\to\Sigma Z_f(S^0)\]
and therefore corresponds to the smashing Bousfield class $\ip{f}$. Indeed, $f$ is a ring spectrum via the isomorphism $f\otimes f\cong f$, so $f$ is $f$-local, and the map $S^0\to f$ is therefore isomorphic as a right idempotent to the map $S^0\to L_f(S^0)$. These give mutually inverse maps of posets because if $\ip{E}$ is smashing, then $\ip{E}=\ip{L_E(S^0)}$ by \hyperref[2.7]{2.7}, and conversely we have just shown that $f\cong L_f(S^0)$.
\end{proof}
\begin{corollary}\label{2.12}
If $E_1,\ldots,E_n\in Sp^G$ are all smashing, then so are $E_1\smsh \cdots\smsh E_n$ and $E_1\vee\cdots\vee E_n$. Moreover, $Z_{E_1\vee\cdots\vee E_n}(S^0)\simeq Z_{E_1}(S^0)\smsh\cdots\smsh Z_{E_n}(S^0)$ and $L_{E_1\smsh\cdots\smsh E_n}(S^0)\simeq L_{E_1}(S^0)\smsh\cdots\smsh L_{E_n}(S^0)$.
\end{corollary}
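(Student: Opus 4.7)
The plan is to induct on $n$ to reduce both statements to the case $n=2$, and then to exploit the bijection between smashing Bousfield classes and idempotent triangles from Corollary \hyperref[2.11]{2.11}. Writing $Z_i = Z_{E_i}(S^0)$ and $L_i = L_{E_i}(S^0)$, Proposition \hyperref[2.7]{2.7}(6) gives $\ip{E_i} = \ip{L_i}$, which forces $Z_i \smsh L_i \simeq *$; smashing the idempotent triangle $Z_i \to S^0 \to L_i$ with $L_i$ and with $Z_i$ in turn yields the self-absorption identities $L_i \smsh L_i \simeq L_i$ and $Z_i \smsh Z_i \simeq Z_i$. I will also use the elementary fact that $\ip{A} = \ip{B}$ implies $\ip{A \smsh C} = \ip{B \smsh C}$ for any $C$, which is immediate from the definition of acyclics.

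For the smash product, these identities combined with associativity and commutativity of $\smsh$ give
\[(L_1 \smsh L_2) \smsh (L_1 \smsh L_2) \simeq (L_1 \smsh L_1) \smsh (L_2 \smsh L_2) \simeq L_1 \smsh L_2,\]
so $L_1 \smsh L_2$ is a right idempotent and, by Corollary \hyperref[2.11]{2.11}, generates a smashing Bousfield class. Two applications of the elementary fact above then yield $\ip{E_1 \smsh E_2} = \ip{L_1 \smsh E_2} = \ip{L_1 \smsh L_2}$, so $E_1 \smsh E_2$ is smashing with $L_{E_1 \smsh E_2}(S^0) \simeq L_1 \smsh L_2$.

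Symmetrically, $Z_1 \smsh Z_2$ is a left idempotent and sits in an idempotent triangle $Z_1 \smsh Z_2 \to S^0 \to f'$ generating a smashing class $\ip{f'}$. To identify this with $\ip{E_1 \wdg E_2}$ I compare acyclics: the distributivity $(E_1 \wdg E_2) \smsh Z \simeq (E_1 \smsh Z) \wdg (E_2 \smsh Z)$ gives $\mc Z_{E_1 \wdg E_2} = \mc Z_{E_1} \cap \mc Z_{E_2}$, while smashing the triangle with $Z$ identifies $\mc Z_{f'}$ with the collection of $Z$ satisfying $Z \simeq Z \smsh Z_1 \smsh Z_2$. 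These agree: $Z \in \mc Z_{E_i} = \mc Z_{L_i}$ forces $Z \simeq Z \smsh Z_i$ from the $i$-th triangle, so iterating the two cases gives one direction, and the converse is immediate from $Z_i \smsh E_i \simeq *$. Hence $E_1 \wdg E_2$ is smashing with $Z_{E_1 \wdg E_2}(S^0) \simeq Z_1 \smsh Z_2$; the general moreover formulae then follow inductively, with uniqueness in Corollary \hyperref[2.11]{2.11} ensuring the iterated smash products are the correct local/acyclic objects.

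No step is a genuine obstacle. The main thing to keep straight is the interplay between left/right idempotents in a triangle and the smashing Bousfield class they generate, together with the self-absorption identities $L_i \smsh L_i \simeq L_i$ and $Z_i \smsh Z_i \simeq Z_i$ which are precisely what make smash products of idempotents remain idempotent.
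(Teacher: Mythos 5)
Your proof is correct, but it takes a more self-contained route than the paper. The paper disposes of the whole corollary in two sentences by citing Balmer--Favi: the tensor product is the product in the category of left idempotents and the coproduct in the category of right idempotents, so by Theorem \hyperref[2.10]{2.10} the poset of smashing ideals has meets and joins, realized by $E\wdg F$ and $E\smsh F$. What you do is essentially re-prove the relevant special case of that citation by hand: the self-absorption identities $L_i\smsh L_i\simeq L_i$ and $Z_i\smsh Z_i\simeq Z_i$ make $L_1\smsh L_2$ a right idempotent and $Z_1\smsh Z_2$ a left idempotent, and you then match Bousfield classes directly ($\ip{E_1\smsh E_2}=\ip{L_1\smsh L_2}$ via the substitution rule for Bousfield classes under smashing, and $\mc Z_{E_1\wdg E_2}=\mc Z_{E_1}\cap\mc Z_{E_2}=\mc Z_{f'}$ via the cofiber-sequence argument). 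Your version buys independence from the external reference at the cost of length; the paper's buys brevity and the extra structural information that these are actually meet and join in the poset of smashing ideals. One small point to keep straight in your argument: Definition \hyperref[2.8]{2.8} requires the \emph{canonical} map $1_f\otimes\phi\colon f\to f\otimes f$ to be an equivalence, not merely an abstract isomorphism $f\otimes f\cong f$; this is fine in your setting because the equivalence $(L_1\smsh L_2)\smsh(L_1\smsh L_2)\simeq L_1\smsh L_2$ you produce is the smash of the two canonical equivalences $1_{L_i}\smsh\phi_i$, but it is worth saying so explicitly, since otherwise the appeal to Corollary \hyperref[2.11]{2.11} does not literally apply.
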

\begin{proof}
It is shown in \cite{Balmer} that the tensor product gives the product in the category of left idempotents and the coproduct in the category of left idempotents. It follows from \hyperref[2.10]{2.10} then that the poset of smashing ideals in $Sp^G$ has meets and joins, and if $E,F$ are smashing $G$-spectra, these correspond to $E\vee F$ and $E\smsh F$ respectively. 
\end{proof}

\section{Bousfield Localizations and Change of Group}\label{sec3}
In this section, we start with a $G$-spectrum $E$ and explore the Bousfield localization functors associated to the spectrum $F(E)$ along various change of group functors $F$. We explore whether $F(E)$ is smashing, assuming that $E$ is smashing.  
\subsection{The General Case.}\label{sec3.1} We first establish some elementary facts about the behavior of localization functors along change of group functors $F$ in general.
\begin{definition}\label{3.1}
Let $i_*:Sp\to Sp^G$ denote the functor that sends a spectrum to the corresponding $G$-spectrum with trivial action, and $(-)^G:Sp^G\to Sp$ its right adjoint, the genuine fixed points. For a subgroup $H\subset G$, let $i^G_H:Sp^G\to Sp^H$ and $G_+\smsh_H(-):Sp^H\to Sp^G$ denote the restriction and induction functors respectively. 
\end{definition}
\begin{proposition}\label{3.2}
We have the following description of localization functors:
\begin{enumerate}
\item $L_{i_*E}(i_*X)\simeq i_*L_E(X)$ for any $E,X\in Sp$
\item $L_{i^G_HE}(i^G_HX)\simeq i^G_HL_E(X)$ for any $E,X\in Sp^G$
\end{enumerate}
\end{proposition}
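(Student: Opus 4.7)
The plan is to show, for each change of group functor $F$ (namely $i_*$ in (1) and $i^G_H$ in (2)), that applying $F$ to the localization cofiber sequence $Z_E(X)\to X\to L_E(X)$ yields the $F(E)$-localization sequence for $F(X)$. By uniqueness of Bousfield localization, this reduces to two checks: (a) $F(Z_E(X))\in\mc Z_{F(E)}$, and (b) $F(L_E(X))\in\mc L_{F(E)}$. Since $i_*$ and $i^G_H$ are both strong symmetric monoidal, (a) is immediate, as $F(E)\smsh F(Z_E(X))\simeq F(E\smsh Z_E(X))\simeq *$. The content is entirely in (b).

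For (2), the restriction--induction adjunction $G_+\smsh_H(-)\dashv i^G_H$ combined with Frobenius reciprocity does the job: for $Z\in\mc Z_{i^G_H E}$, the projection formula gives
\[E\smsh(G_+\smsh_H Z)\simeq G_+\smsh_H(i^G_H E\smsh Z)\simeq *,\]
so $G_+\smsh_H Z\in\mc Z_E$, and the adjunction then yields
\[[Z,i^G_H L_E(X)]^H\simeq [G_+\smsh_H Z,L_E(X)]^G=0\]
by $E$-locality of $L_E(X)$.

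For (1), I would mirror this argument. Since $i_*$ preserves both limits and colimits (trivial action commutes with both), it has a left adjoint $\mc L\colon Sp^G\to Sp$. A Frobenius-type projection formula $\mc L(i_*A\smsh Z)\simeq A\smsh\mc L(Z)$ --- which follows from the strong monoidality of $i_*$ via the counit $\mc L i_*\to\id$ --- then ensures $\mc L$ sends $i_*E$-acyclics to $E$-acyclics, whence $[Z,i_*L_E(X)]^G\simeq[\mc L(Z),L_E(X)]=0$.

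The main obstacle is justifying the projection formula in part (1); whereas the induction--restriction projection is classical, the left adjoint $\mc L$ to the trivial-action functor is less familiar. A possible alternative is to reduce (1) to a cellular/geometric-fixed-point argument: by \hyperref[2.2]{2.2}, together with the identification $\Phi^H i_* E\simeq E$, $Z$ is $i_*E$-acyclic iff $\Phi^H Z\in\mc Z_E$ for all $H$; one then controls $[Z,i_*L_E(X)]^G$ on a cellular filtration of $Z$ by orbit type and uses the induction--restriction adjunction levelwise, reducing part (1) to the already-established part (2).
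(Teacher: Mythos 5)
Your proof is correct and follows the paper's argument: in both parts the only real content is that $F(L_E(X))$ is $F(E)$-local, which you verify via the left adjoint of $F$ plus the projection formula, and the left adjoint $\mc L$ of $i_*$ is just the orbit functor $(-)_G$ with $(Z\smsh i_*E)_G\simeq Z_G\smsh E$ --- exactly the identity the paper invokes. The fallback route you sketch for (1) is unnecessary: the projection formula map (which comes from the unit $Z\to i_*\mc L Z$ rather than the counit) is an equivalence because both sides preserve colimits in the $Sp$-variable and agree when $E=S^0$.
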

\begin{proof}
$i_*$ and $i^G_H$ are symmetric monoidal, hence the map $i_*X\to i_*L_E(X)$ becomes an equivalence after smashing with $i_*E$, and $i^G_HX\to i^G_HL_EX$ becomes an equivalence after smashing with $i^G_HE$. $i_*L_E(X)$ is $i_*E$-local because if $Z\in\mc Z_{i_*E}$, then 
\[[Z,i_*L_E(X)]^G\cong[Z_G,L_E(X)]\]
and $Z_G\in \mc Z_E$ because
\[Z_G\smsh E\simeq (Z\smsh i_*E)_G\simeq *\]
$i^G_HL_E(X)$ is $i^G_HE$-local because if $Z\in \mc Z_{i^G_HE}$, then
\[[Z,i^G_HL_E(X)]^H\cong[G_+\smsh_HZ,L_E(X)]^G\]
and $G_+\smsh_HZ\in \mc Z_E$, as
\[(G_+\smsh_HZ)\smsh E\simeq G_+\smsh_H(Z\smsh i^G_HE)\simeq*\]
%
%
\end{proof}

From \hyperref[2.2]{2.2}, it is not difficult in general to characterize the $F(E)$-acyclics in terms of the $E$-acyclics, where $F$ is one of our change of group functors above. Characterizing the $F(E)$-locals in terms of the $E$-locals is much more difficult. For restriction and induction, however, we can give a simple necessary and sufficient condition.

\begin{proposition}\label{3.3}
For any $E\in Sp^G$, $Y\in Sp^H$ is $i^G_HE$-local if and only if $G_+\smsh_HY$ is $E$-local.
\end{proposition}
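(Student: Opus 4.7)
The plan is to leverage the Wirthm\"uller isomorphism: because $G$ is finite, the induction functor $G_+ \smsh_H (-)$ is simultaneously a left and a right adjoint to the restriction $i^G_H$. I will use the two resulting natural isomorphisms
\[ [G_+ \smsh_H W, X]^G \cong [W, i^G_H X]^H \qquad\text{and}\qquad [X, G_+ \smsh_H Y]^G \cong [i^G_H X, Y]^H \]
for $X \in Sp^G$ and $W, Y \in Sp^H$, together with the symmetric monoidality of $i^G_H$ and the projection formula $G_+ \smsh_H(W \smsh i^G_H X) \simeq (G_+ \smsh_H W) \smsh X$. These ingredients already appear in the proof of \hyperref[3.2]{3.2}(2), so the structure will be parallel.

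For the forward direction, I start with $Y \in \mc L_{i^G_H E}$ and an $E$-acyclic $Z$. The second adjunction rewrites $[Z, G_+ \smsh_H Y]^G$ as $[i^G_H Z, Y]^H$, and a one-line computation using symmetric monoidality shows $i^G_H Z \smsh i^G_H E \simeq i^G_H(Z \smsh E) \simeq *$, so $i^G_H Z \in \mc Z_{i^G_H E}$; locality of $Y$ then makes this group vanish.

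For the reverse direction, assume $G_+ \smsh_H Y$ is $E$-local and let $W \in \mc Z_{i^G_H E}$. The projection formula first gives $(G_+ \smsh_H W) \smsh E \simeq G_+ \smsh_H(W \smsh i^G_H E) \simeq *$, so $G_+ \smsh_H W \in \mc Z_E$. The first adjunction then identifies $[W, i^G_H(G_+ \smsh_H Y)]^H$ with the vanishing group $[G_+ \smsh_H W, G_+ \smsh_H Y]^G$. It remains to deduce $[W, Y]^H = 0$, which I expect to obtain by observing that $i^G_H(G_+ \smsh_H Y) \simeq \bigvee_{gH \in G/H} g \cdot Y$ in $Ho(Sp^H)$, exhibiting $Y$ as a retract via the identity-coset summand and therefore realizing $[W, Y]^H$ as a retract of the group that already vanishes. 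This retract step is the only non-formal ingredient and is the main place to argue carefully; everything else is bookkeeping with the two Wirthm\"uller adjunctions and the projection formula.
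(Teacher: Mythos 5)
Your proof is correct and follows essentially the same route as the paper's: the Wirthm\"uller adjunction plus monoidality of $i^G_H$ for the forward direction, and the projection formula, the standard induction--restriction adjunction, and the retract of $Y$ inside $i^G_H(G_+\smsh_H Y)$ for the converse. The only difference is that you spell out the acyclicity checks and the retract step that the paper leaves implicit.
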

\begin{proof}
If $Y$ is $i^G_HE$-local, then if $Z\in\mc Z_E$, we have
\[[Z,G_+\smsh_HY]^G\cong[i^G_HZ,Y]^H=0\]
Conversely, if $Z\in \mc Z_{i^G_HE}$, $G_+\smsh_H\mc Z_{i^G_HE}\subset \mc Z_{E}$ implies that
\[[Z,i^G_H(G_+\smsh_HY)]^H\cong [G_+\smsh_HZ,G_+\smsh_HY]^G=0\]
and since $Y$ is a summand of $i^G_H(G_+\smsh_HY)$, $[Z,Y]^H=0$ .
\end{proof}
\begin{definition}\label{3.4}
Let $H\subset G$, then we let $\mc F_H$ be the family of subgroups of $G$ that are subconjugate to $H$ - that is, $\mc F_H$ is the smallest family of subgroups of $G$ containing $H$. We say a $G$-spectrum $X$ is $H$-cofree if the canonical map
\[X\to F(E{\mc {F}_H}_+,X)\]
is an equivalence, where $F(-,-)$ denotes the internal mapping spectrum in $Sp^G$, and $E{\mc {F}_H}$ is the universal $G$-space for the family $\mc F_H$. We simply say cofree, or Borel complete, when $H=\{e\}$. 
\end{definition}
\begin{lemma}\label{3.5}
If $X\in Sp^G$, then $F(E{\mc {F}_H}_+,X)\simeq L_{G/H_+}(X)$.
\end{lemma}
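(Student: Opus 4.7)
The plan is to verify the two defining properties of Bousfield localization at $G/H_+$: that $F(E{\mc F_H}_+, X)$ is $G/H_+$-local, and that the natural map $X \to F(E{\mc F_H}_+, X)$ is a $G/H_+$-equivalence. The first thing I would do is apply \hyperref[2.2]{2.2} to identify the acyclics. Since $G/H_+$ is a suspension spectrum, $\Phi^K(G/H_+) \simeq \Sigma^\infty_+ (G/H)^K$, and $(G/H)^K$ is nonempty exactly when $K$ is subconjugate to $H$, i.e.\ when $K \in \mc F_H$. Hence $Z \in \mc Z_{G/H_+}$ if and only if $\Phi^K(Z) \simeq *$ for every $K \in \mc F_H$.

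Next I would check locality. Using the smash-hom adjunction, the vanishing $[Z, F(E{\mc F_H}_+, X)]^G = 0$ reduces to $Z \smsh E{\mc F_H}_+ \simeq *$ for $Z \in \mc Z_{G/H_+}$. By joint conservativity of geometric fixed points, this in turn reduces to $\Phi^K(Z) \smsh \Phi^K(E{\mc F_H}_+) \simeq *$ for each $K \subset G$, which splits into two cases: for $K \in \mc F_H$ it follows from $\Phi^K(Z) \simeq *$, and for $K \notin \mc F_H$ it follows from $(E\mc F_H)^K = \emptyset$, which forces $\Phi^K(E{\mc F_H}_+) \simeq *$. For the equivalence, I would apply $F(-, X)$ to the standard cofiber sequence $E{\mc F_H}_+ \to S^0 \to \tilde E\mc F_H$ to identify the fiber of $X \to F(E{\mc F_H}_+, X)$ as $F(\tilde E\mc F_H, X)$, so I must show $G/H_+ \smsh F(\tilde E\mc F_H, X) \simeq *$. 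Writing $G/H_+ \smsh (-) \simeq G_+ \smsh_H i^G_H(-)$ and using that restriction is symmetric monoidal and commutes with internal function spectra, this reduces to $i^G_H \tilde E\mc F_H \simeq *$.

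The one nonformal step — and the one I expect to require the most care — is this last reduction: the key input is that any $K \subset H$ is trivially subconjugate to $H$, so the restriction of $\mc F_H$ to $H$ is the family of all subgroups of $H$. Consequently, $i^G_H E\mc F_H$ has contractible $K$-fixed points for every $K \subset H$, so it is a contractible $H$-space, and applying $i^G_H$ to the defining cofiber sequence yields $i^G_H \tilde E\mc F_H \simeq *$. The remaining arguments are routine applications of \hyperref[2.2]{2.2} and of the standard cofiber sequence associated to a family of subgroups.
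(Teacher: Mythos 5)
Your proof is correct, and its overall shape (verify that the target is $G/H_+$-local and that the map is a $G/H_+$-equivalence) matches the paper's; the equivalence half is essentially identical, resting on the same key fact that $i^G_H E{\mc F_H}_+\simeq S^0$ because $\mc F_H$ restricts to the family of all subgroups of $H$. The one place you genuinely diverge is the locality half: after the same adjunction reduces the problem to $Z\smsh E{\mc F_H}_+\simeq *$, the paper argues via the cell structure of $E\mc F_H$ — namely that $E{\mc F_H}_+$ lies in the localizing subcategory generated by $\{G/K_+ : K\in\mc F_H\}$ and each $G/K_+$ kills $Z$ — whereas you argue via joint conservativity of the geometric fixed point functors, splitting into the cases $K\in\mc F_H$ (where $\Phi^K(Z)\simeq *$ by \hyperref[2.2]{2.2}) and $K\notin\mc F_H$ (where $(E\mc F_H)^K=\emptyset$ forces $\Phi^K(E{\mc F_H}_+)\simeq *$). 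Both arguments are standard and complete; yours leans on \hyperref[2.2]{2.2} and the fixed-point description of universal spaces, which fits well with how acyclics are handled elsewhere in the paper, while the paper's version avoids geometric fixed points entirely and uses only the orbit-cell filtration, making it marginally more self-contained at this point in the text.
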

\begin{proof}
Since $i^G_H(E{\mc {F}_H}_+)\simeq S^0$, it follows that
\[X\to F(E{\mc {F}_H}_+,X)\]
becomes an equivalence after smashing with $G/H_+$. $F(E{\mc {F}_H}_+,X)$ is $G/H_+$-local because if $Z\in \mc Z_{G/H_+}$ so that $i^G_HZ\simeq *$, then
\[[Z,F(E{\mc {F}_H}_+,X)]^G\cong[Z\smsh E{\mc {F}_H}_+,X]^G\]
and $Z\smsh E{\mc {F}_H}_+\simeq *$. For this, let
\[\mc T=\{Y\in Sp^G\st Z\smsh Y\simeq *\}\]
then $\mc T$ is a localizing subcategory of $Sp^G$, and $E{\mc {F}_H}_+$ is in the localizing subcategory generated by $\{G/K_+\st K\in\mc F_H\}$, so it suffices to observe that $G/K_+\in\mc T$ for all $K\in \mc{F}_H$. 
\end{proof}
\begin{corollary}\label{3.6}
A map $f:X\to Y$ in $Sp^G$ between $H$-cofree $G$-spectra is an equivalence if and only if $i^G_H(f)$ is an equivalence.
\end{corollary}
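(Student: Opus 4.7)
The forward direction is immediate, since any functor preserves equivalences. For the converse, suppose $i^G_H(f)$ is an equivalence, and let $C$ denote the cofiber of $f$. The plan is to show that $C$ is simultaneously $G/H_+$-local and $G/H_+$-acyclic, which forces $C \simeq *$ (any map from an acyclic to a local is null, so $\id_C = 0$), and hence $f$ is an equivalence.

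For \emph{locality} of $C$, I would invoke \hyperref[3.5]{3.5} to translate $H$-cofreeness into $G/H_+$-locality: both $X$ and $Y$ lie in $\mc L_{G/H_+}$. For any $Z \in \mc Z_{G/H_+}$, applying $[Z,-]^G$ to the cofiber sequence $X \to Y \to C$ yields a long exact sequence
\[ [Z,X]^G \to [Z,Y]^G \to [Z,C]^G \to [Z,\Sigma X]^G \to [Z,\Sigma Y]^G \]
in which the four surrounding groups vanish by locality of $X$ and $Y$ (and their suspensions, since $\mc L_{G/H_+}$ is closed under $\Sigma$). Hence $[Z,C]^G = 0$ and $C \in \mc L_{G/H_+}$.

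For \emph{acyclicity} of $C$, note that $i^G_H$ preserves cofiber sequences, so $i^G_H(C) \simeq *$. By the projection formula (or equivalently by applying the adjunction $(G_+\smsh_H(-), i^G_H)$),
\[ G/H_+ \smsh C \;\simeq\; G_+ \smsh_H i^G_H(C) \;\simeq\; *, \]
so $C \in \mc Z_{G/H_+}$.

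The argument is short and the only mildly subtle point is closure of $\mc L_{G/H_+}$ under cofibers; this is automatic in the stable setting because locality is a right-orthogonality condition and fibers agree with cofibers up to a shift. Everything else is a direct application of \hyperref[3.5]{3.5} and the standard principle that $\mc L_E \cap \mc Z_E = \{0\}$.
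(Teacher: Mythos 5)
Your proof is correct and is essentially the paper's argument: the paper simply cites the general principle that a map between $E$-locals is an equivalence if and only if it is an $E$-equivalence (taking $E = G/H_+$), and your cofiber argument — showing $C$ is local because locals are closed under cofibers, and acyclic via the projection formula $G/H_+ \smsh C \simeq G_+ \smsh_H i^G_H(C)$ — is exactly the standard proof of that principle specialized to this case.
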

\begin{proof}
In general, a map between $E$-locals is an equivalence if and only if it becomes an equivalence after smashing with $E$. Letting $E=G/H_+$ gives the result.
\end{proof}
\begin{proposition}\label{3.7}
For any $E\in Sp^H$, $X\in Sp^G$ is $G_+\smsh_HE$-local if and only if $X$ is $H$-cofree and $i^G_HX$ is $i^G_H(G_+\smsh_HE)$-local. \end{proposition}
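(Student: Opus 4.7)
The plan is to prove the two implications separately, leveraging Lemma 3.5 (so that being $H$-cofree is the same as being $G/H_+$-local), Proposition 3.3 (locality of $i^G_H X$ at $i^G_H F$ is equivalent to locality of $G_+\smsh_H i^G_H X \simeq X\smsh G/H_+$ at $F$), and the self-duality of $G/H_+$ as a finite $G$-set. For the forward direction, assume $X$ is $G_+\smsh_H E$-local. To obtain $H$-cofreeness, by Lemma 3.5 it suffices to prove the Bousfield inclusion $\mc Z_{G/H_+}\subset\mc Z_{G_+\smsh_H E}$: if $Z\smsh G/H_+\simeq *$ then restricting to $H$ annihilates $i^G_H Z\smsh i^G_H(G/H_+)$, and since $eH\in G/H$ is an $H$-fixed point, $i^G_H(G/H_+)$ retracts onto $S^0$, forcing $i^G_H Z\simeq *$ and hence $Z\smsh(G_+\smsh_H E)\simeq G_+\smsh_H(i^G_H Z\smsh E)\simeq *$. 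For locality of $i^G_H X$, I would apply Proposition 3.3 with $G_+\smsh_H E$ in place of $E$: this reduces the claim to showing $X\smsh G/H_+$ is $G_+\smsh_H E$-local, and self-duality gives $[Z,X\smsh G/H_+]^G\cong[Z\smsh G/H_+,X]^G$, which vanishes because $Z\smsh G/H_+\in\mc Z_{G_+\smsh_H E}$ and $X$ is local.

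For the reverse direction, assume $X$ is $H$-cofree and $i^G_H X$ is $i^G_H(G_+\smsh_H E)$-local, and let $Z\in\mc Z_{G_+\smsh_H E}$. $H$-cofreeness gives $[Z,X]^G\cong[Z\smsh E\mc F_{H+},X]^G$, and since $E\mc F_H$ is built from $G$-cells $G/K_+$ for $K\in\mc F_H$, d\'evissage reduces the problem to showing $[Z\smsh G/K_+,X]^G=0$ for each such $K$. By the Wirthm\"uller adjunction this equals $[i^G_K Z,i^G_K X]^K$. The restriction $i^G_K Z$ is $i^G_K(G_+\smsh_H E)$-acyclic because $i^G_K$ is symmetric monoidal; and for $K\subset {}^g H$ one obtains locality of $i^G_K X$ at $i^G_K(G_+\smsh_H E)$ by transporting the hypothesis through the conjugation equivalence $Sp^H\simeq Sp^{{}^g H}$ and then restricting to $K\subset {}^g H$, invoking Proposition 3.2(2). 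The desired vanishing follows.

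The main obstacle is the reverse direction: both the cellular d\'evissage along $E\mc F_H$ and, more importantly, the propagation of locality from $H$ to every subgroup subconjugate to $H$ (via conjugation followed by further restriction) are what make the single hypothesis at level $H$ strong enough to recover $G$-locality of $X$. The forward direction, by contrast, is essentially a direct computation once one identifies the right dualizability input ($G/H_+$ self-dual) and the right adjunction input (Proposition 3.3).
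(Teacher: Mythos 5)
Your proof is correct, but it takes a genuinely different (and longer) route than the paper's on both implications. For the forward direction the paper also begins with the Bousfield class inclusion $\mc Z_{G/H_+}\subset\mc Z_{G_+\smsh_HE}$ to get $H$-cofreeness, but for the locality of $i^G_HX$ it simply invokes Proposition 3.2(2): since $i^G_HL_{G_+\smsh_HE}(X)\simeq L_{i^G_H(G_+\smsh_HE)}(i^G_HX)$, restriction carries locals to locals. Your detour through Proposition 3.3 plus Spanier--Whitehead self-duality of $G/H_+$ proves the same statement, at the cost of re-deriving what 3.2(2) already hands you. For the reverse direction the paper notes that both $X$ (by hypothesis) and $L_{G_+\smsh_HE}(X)$ (being local, hence $H$-cofree by the inclusion above) are $H$-cofree, so by Corollary 3.6 the localization map $\phi_X$ is an equivalence iff $i^G_H(\phi_X)$ is, and the latter is the localization map of $i^G_HX$ at $i^G_H(G_+\smsh_HE)$ by 3.2(2), an equivalence by hypothesis. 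Your cellular d\'evissage along $E{\mc F_H}_+$, combined with the conjugation-then-restriction step that propagates locality from $H$ to every $K\in\mc F_H$, is in effect an inline re-proof of Lemma 3.5/Corollary 3.6; it is valid (the class of $Y$ with $[Z\smsh Y,X]^G_*=0$ is localizing, and $E{\mc F_H}_+$ lies in the localizing subcategory generated by the $G/K_+$ with $K\in\mc F_H$), but that machinery was already packaged for you. The one virtue of your version is that it makes explicit why a hypothesis at the single subgroup $H$ controls all subconjugate subgroups, namely via the conjugation equivalences $Sp^H\simeq Sp^{{}^gH}$ together with 3.2(2), which the paper's appeal to 3.6 leaves implicit.
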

\begin{proof}
Suppose $X$ is $G_+\smsh_HE$-local. Clearly, $\mc Z_{G/H_+}\subset\mc Z_{G_+\smsh_H E}$ and hence $\mc L_{G_+\smsh_HE}\subset\mc L_{G/H_+}$ - that is, $G_+\smsh_HE$-locals are $H$-cofree. $i^G_HX$ is $i^G_H(G_+\smsh_HE)$-local by \hyperref[3.2]{3.2}.

Conversely, if $X$ is $H$-cofree, then it suffices to show the map $\phi_X:X\to L_{G_+\smsh_HE}(X)$ is an equivalence, and by \hyperref[3.6]{3.6}, it suffices to show that $i^G_H(\phi_X)$ is an equivalence, which follows again by assumption from \hyperref[3.2]{3.2}.
\end{proof}
\begin{remark}\label{3.8}
Since $E$ is a retract of $i^G_H(G_+\smsh_HE)$, we have $\mc Z_{i^G_H(G_+\smsh_HE)}\subset\mc Z_{E}$ and hence $\mc L_{E}\subset \mc L_{i^G_H(G_+\smsh_HE)}$. For $X$ to be $G_+\smsh_HE$-local, it is therefore \ti{sufficient} for $X$ to be $H$-cofree and $i^G_HX$ to be $E$-local.
\end{remark}
The following are easy consequences of the double coset formula for $i^G_H(G_+\smsh_HE)$.
\begin{corollary}\label{3.9}
If $H\subset G$ is normal, $X\in Sp^G$ is $G_+\smsh_HE$-local if and only if $X$ is $H$-cofree and $i^G_HX$ is $\bigvee\limits_{[g]\in G/H} {^gE}$-local, where $^gE$ are the Weyl conjugates of $E$.
\end{corollary}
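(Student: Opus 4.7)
The plan is to combine Proposition \ref{3.7} with the double coset formula for the composite $i^G_H \circ (G_+\smsh_H -)$. By Proposition \ref{3.7}, $X \in Sp^G$ is $G_+\smsh_H E$-local if and only if $X$ is $H$-cofree and $i^G_H X$ is $i^G_H(G_+\smsh_H E)$-local, so the entire task reduces to identifying the Bousfield class of the $H$-spectrum $i^G_H(G_+\smsh_H E)$ with that of $\bigvee_{[g]\in G/H} {}^gE$.

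First I would invoke the double coset formula, which produces an equivalence
\[i^G_H(G_+\smsh_H E) \simeq \bigvee_{[g]\in H\backslash G/H} H_+\smsh_{H\cap {}^gH} c_g^*\bigl(i^H_{H\cap {}^{g^{-1}}H} E\bigr)\]
in $Sp^H$, where $c_g$ denotes conjugation by $g$. Under the hypothesis that $H$ is normal in $G$, the double coset set $H\backslash G/H$ collapses to the ordinary quotient $G/H$, and $H\cap {}^gH = H$ for every $g\in G$, so both the inner restriction $i^H_{H\cap {}^{g^{-1}}H}$ and the outer induction $H_+\smsh_{H\cap {}^gH}(-)$ become trivial. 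The formula therefore collapses to an actual equivalence
\[i^G_H(G_+\smsh_H E) \simeq \bigvee_{[g]\in G/H} {}^gE\]
of $H$-spectra, with each Weyl conjugate ${}^gE$ well-defined as an $H$-spectrum precisely because $H$ is normal in $G$. Substituting this equivalence into the second condition of Proposition \ref{3.7} immediately yields the corollary.

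I expect no real obstacle here beyond correctly bookkeeping the double coset formula. The normality hypothesis has been arranged precisely so that the double coset sum degenerates to a sum over $G/H$ and so that the wedge summands identify unambiguously with the Weyl conjugates; no further Bousfield-theoretic input beyond Proposition \ref{3.7} is required.
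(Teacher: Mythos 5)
Your proof is correct and follows exactly the route the paper intends: the paper states \hyperref[3.9]{3.9} as an immediate consequence of \hyperref[3.7]{3.7} together with the double coset formula for $i^G_H(G_+\smsh_HE)$, which is precisely what you carry out, with the normality of $H$ collapsing $H\backslash G/H$ to $G/H$ and each $H\cap{}^gH$ to $H$. Your write-up just makes explicit the bookkeeping the paper leaves to the reader.
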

\begin{corollary}\label{3.10}
If $G$ is abelian, $X\in Sp^G$ is $G_+\smsh_HE$-local if and only if $X$ is $H$-cofree and $i^G_HX$ is $E$-local.
\end{corollary}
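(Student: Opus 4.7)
The plan is to derive this directly from Corollary 3.9. Since $G$ is abelian, every subgroup $H$ is normal, so Corollary 3.9 applies and gives the criterion in terms of being $H$-cofree and $i^G_H X$ being $\bigvee_{[g]\in G/H}{^gE}$-local. It then suffices to identify this wedge, up to Bousfield equivalence, with $E$ itself.

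The first key observation is that when $G$ is abelian, conjugation by any $g \in G$ acts trivially on $H$, i.e.\ the automorphism $c_g : H \to H$, $h \mapsto ghg^{-1}$, is the identity. Consequently the Weyl conjugate $^gE$, which is obtained by restricting $E$ along $c_g$, is canonically isomorphic to $E$ in $Sp^H$ for each $g$. Thus
\[\bigvee_{[g]\in G/H}{^gE} \;\simeq\; \bigvee_{[g]\in G/H} E,\]
a finite wedge of copies of $E$ indexed by the cosets of $H$.

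The second observation is that this wedge is Bousfield equivalent to $E$: for $Z \in Sp^H$ we have
\[Z\smsh \bigvee_{[g]\in G/H} E \;\simeq\; \bigvee_{[g]\in G/H}(Z\smsh E),\]
which is contractible if and only if $Z \smsh E$ is contractible. Hence $\mc Z_{\bigvee_{[g]}E}=\mc Z_E$, so $\mc L_{\bigvee_{[g]}E}=\mc L_E$, and being $\bigvee_{[g]}{^gE}$-local is the same as being $E$-local. Substituting this into the statement of Corollary 3.9 yields the desired equivalence. There is no real obstacle here; the only thing to double-check is that the Weyl action is genuinely trivial in the abelian case, which follows immediately from the definition of the conjugation action on the restriction.
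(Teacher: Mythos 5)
Your proof is correct and follows the same route the paper intends: the paper states 3.9 and 3.10 together as "easy consequences of the double coset formula," and your argument — all subgroups of an abelian group are normal, the Weyl conjugates $^gE$ are literally $E$ since conjugation is trivial, and a finite wedge of copies of $E$ is Bousfield equivalent to $E$ because smashing distributes over wedges — is exactly that derivation. No gaps.
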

\subsection{The Smashing Case.}\label{sec3.2} We now discuss how smashing localizations behave under change of group functors. We first recall the following variant of the norm functor $N_H^G:Sp^H\to Sp^G$ of \cite{HHR}. Let $N^{G/H}:Sp^G\to Sp^G$ denote the composition $N_H^G\circ i^G_H$, and for
\[T=G/H_1\sqcup\cdots\sqcup G/H_n\]
a finite $G$-set, we let $N^T:Sp^G\to Sp^G$ denote the functor $N^{G/H_1}\smsh\cdots\smsh N^{G/H_n}$. We will also need the following description of how geometric fixed points interact with the norm.
\begin{proposition}\label{3.11}
$($\cite{HHR}\textnormal{, Proposition B.209}$)$ For any $K,H\subset G$, and for any $E\in Sp^H$, the diagonal gives an equivalence of spectra
\[\Phi^KN_H^GE\x{\simeq}\bigwedge\limits_{[g]\in K\backslash G/H}\Phi^{K^g\cap H}E\]
\end{proposition}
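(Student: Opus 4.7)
The plan is to reduce the statement to its transitive building block and then invoke the principle that the geometric fixed points of an indexed smash product are extracted by the diagonal. First, I recall that $N_H^G E$ is constructed as an indexed smash product $\bigwedge_{G/H} E$ with $G$ permuting factors via translation on $G/H$, and that this construction is multiplicative on disjoint unions of $G$-sets: $N^{T_1 \sqcup T_2}(E) \simeq N^{T_1}(E) \smsh N^{T_2}(E)$. Since $\Phi^K$ is symmetric monoidal, decomposing $G/H$ into $K$-orbits via the double coset formula
\[G/H = \bigsqcup_{[g]\in K\backslash G/H} KgH/H\]
immediately gives
\[\Phi^K N_H^G E \simeq \bigwedge_{[g]\in K\backslash G/H} \Phi^K \bigl(\bigwedge_{KgH/H} E\bigr),\]
reducing the problem to identifying each factor individually.

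Next, I would identify the transitive $K$-set $KgH/H$ with $K/(K\cap{^gH})$ via $k\mapsto kgH$ and track the action on the smash factor above the basepoint: the stabilizer $K\cap{^gH}$ acts on $E$ through the conjugation isomorphism $c_{g^{-1}}\colon {^gH}\to H$ restricted to this subgroup, which sends $K\cap{^gH}$ onto $K^g\cap H\subset H$. Under this identification, each factor becomes $\Phi^K(N_{K\cap{^gH}}^K F)$, where $F$ is $E$ regarded as a $(K\cap{^gH})$-spectrum via conjugation. The problem therefore reduces to the transitive case: for $L\subset K$ and $F\in Sp^L$,
\[\Phi^K N_L^K F \simeq \Phi^L F.\]
After this, the conjugation bookkeeping converts $\Phi^{K\cap{^gH}}F$ into $\Phi^{K^g\cap H}E$ in the paper's notation.

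The essential content is the transitive case, which is the heart of the cited result from \cite{HHR}. I would handle it model-categorically on orthogonal spectra: $\Phi^K$ is computed by smashing a cofibrant replacement with $\widetilde{E\mc P}_K$ and taking $K$-fixed points, where $\mc P$ is the family of proper subgroups of $K$. The diagonal $F\to N_L^K F$, which on underlying objects is the inclusion of the diagonal copy, supplies the candidate map, and an equivalence is verified by filtering $\widetilde{E\mc P}_K$ and checking that any non-diagonal orbit of smash factors has stabilizer properly contained in $K$ and is therefore killed. The main obstacle is executing this last step cleanly at the point-set level, since it requires a convenient model of the indexed smash product and careful handling of the universe change built into $N_L^K$. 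Everything else — the double coset decomposition, the orbit identification, and the conjugation bookkeeping — is purely formal once the transitive case is secured.
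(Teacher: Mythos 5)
The paper offers no proof of this proposition — it is quoted directly from \cite{HHR}, Proposition B.209 — so there is no internal argument to compare against. Your outline (restrict the indexed smash product to $K$, decompose $G/H$ into $K$-orbits via double cosets, use monoidality of $\Phi^K$ and the conjugation bookkeeping to reduce to the transitive case $\Phi^K N_L^K F\simeq \Phi^L F$, then verify that case with the point-set diagonal and the observation that non-diagonal smash factors have proper stabilizers) is exactly the standard argument of the cited reference and is correct.
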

\begin{proposition}\label{3.12} Let $H\subset G$ be a subgroup. Smashing Bousfield classes are preserved by the following change of group functors:
\begin{enumerate}
\item If $E\in Sp$ is smashing, then $i_*E\in Sp^G$ is smashing.
\item If $E\in Sp^G$ is smashing, then $i^G_HE\in Sp^H$ is smashing.
\item If $E\in Sp^G$ is smashing, then $\Phi^H(E)\in Sp$ is smashing.
\item Let $f:G\to G'$ be a group homomorphism and $f^*:Sp^{G'}\to Sp^G$ the induced functor. If $E\in Sp^{G'}$ is smashing, then $f^*E\in Sp^G$ is smashing.
\item If $E\in Sp^H$ is smashing, then $N_H^GE$ is smashing.
\item If $E\in Sp^G$ is smashing, and $T$ is a finite $G$-set, then $N^TE$ is smashing.
\item If $E\in Sp^G$ is smashing, and for all $H\subset G$, $\mc Z_{E^G}\subset\mc Z_{E^H}$ (e.g. if $E$ is a ring spectrum), then $E^G$ is smashing.
\end{enumerate}
Moreover, for each functor $F$ in items (1)-(6), we have $L_{F(E)}(F(X))\simeq F(L_E(X))$. In item (7), we have
\[Z_{E^G}(X)=\bigotimes\limits_{H\subset G}\Phi^H(Z_R(X))\]
so that
\[L_{E^G}(X)=\mathrm{cofib}\bg(\bigotimes\limits_{H\subset G}\Phi^H(Z_R(X))\to S^0\bg)\]
\end{proposition}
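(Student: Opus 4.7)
My plan is to use Corollary 2.11 throughout: smashing Bousfield classes correspond bijectively to isomorphism classes of idempotent triangles, so I would transport the idempotent triangle $Z_E(S^0) \to S^0 \to L_E(S^0)$ along each functor $F$, verify that the resulting triangle in the target is still idempotent, and identify its Bousfield class with $\ip{F(E)}$. This identification simultaneously produces smashing of $F(E)$ and the formula $L_{F(E)}(F(X)) \simeq F(L_E(X))$, since $F(L_E(S^0))$ then becomes the $F(E)$-localization of the unit.

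For items (1)--(4), each $F$ is strong symmetric monoidal and exact, so idempotency of $F(Z_E(S^0)) \to S^0 \to F(L_E(S^0))$ is immediate from
\[F(Z_E(S^0)) \smsh F(L_E(S^0)) \simeq F(Z_E(S^0) \smsh L_E(S^0)) \simeq F(*) \simeq *.\]
The unit $S^0 \to F(L_E(S^0))$ is an $F(E)$-equivalence because smashing with $F(E)$ gives $F(E) \to F(L_E(S^0) \smsh E) \simeq F(E)$, an equivalence. The $F(E)$-locality of $F(L_E(S^0))$ is handled by Proposition 3.2 for (1) and (2); for (3) and (4), I would use the relevant adjoints of $F$ to lift an $F(E)$-acyclic back to an object of $\mc Z_E$ in $Sp^G$, apply the source equality $\ip{E} = \ip{L_E(S^0)}$, and push down through $F$.

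For items (5) and (6), the norm $N_H^G$ is strong symmetric monoidal but not exact, so I would avoid applying it to the distinguished triangle directly. Instead, I would observe that $N_H^G(L_E(S^0))$ remains an idempotent ring spectrum (the norm preserves the multiplication $L_E(S^0) \smsh L_E(S^0) \simeq L_E(S^0)$), and complete the unit $S^0 \to N_H^G(L_E(S^0))$ to a distinguished triangle, which is then automatically idempotent. To match its Bousfield class with $\ip{N_H^G E}$, I would apply Propositions 2.2 and 3.11 in tandem: the double-coset formula gives $\Phi^K(N_H^G E) \simeq \bigwedge_{[g]} \Phi^{K^g \cap H}(E)$ and likewise for $L_E(S^0)$, and Proposition 2.2 reduces equality of Bousfield classes in $Sp^G$ to equality at every such geometric fixed point, which in turn reduces to item (3) applied at each intersection subgroup. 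Item (6) then follows from items (2), (5), and Corollary 2.12, since $N^T$ is a smash product of compositions of restrictions and norms.

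For item (7), the fixed-point functor $(-)^G$ is neither strong symmetric monoidal nor colimit-preserving, so the triangle strategy fails. I would instead build $Z_{E^G}(X)$ directly using iterated isotropy separation sequences across subgroups $H \subseteq G$: the hypothesis $\mc Z_{E^G} \subseteq \mc Z_{E^H}$ ensures that each $\Phi^H(Z_R(X))$ lies in $\mc Z_{E^G}$, so their smash product does too, yielding the formula $Z_{E^G}(X) = \bigotimes_{H \subseteq G} \Phi^H(Z_R(X))$ after identifying the cofiber with $L_{E^G}(X)$. The main obstacle I anticipate is the local-lifting step in items (3)--(6): while the idempotent triangle framework cleanly organizes the argument, verifying $F(E)$-locality of $F(L_E(S^0))$ requires intricate use of the specific adjoint and geometric fixed-point structure of each $F$. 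The non-exactness of the norm forces a genuinely different argument through 3.11 instead of a direct triangle chase, and the fixed-point case requires navigating a distinct combinatorial structure via iterated isotropy separation.
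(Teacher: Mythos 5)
Your overall strategy is the paper's: transport the idempotent along each strong symmetric monoidal functor, reduce equality of Bousfield classes to geometric fixed points via Propositions \hyperref[2.2]{2.2} and \hyperref[3.11]{3.11} for the norm, assemble (6) from (2), (5), and \hyperref[2.12]{2.12}, and handle (7) by identifying $\ip{E^G}$ with a finite wedge of geometric fixed points. One organizational point is worth making: the step you single out as the main obstacle --- separately verifying $F(E)$-locality of $F(L_E(S^0))$ --- is exactly what Corollary \hyperref[2.11]{2.11} lets the paper skip. Since $F$ is strong monoidal, $F(L_E(S^0))$ is automatically a right idempotent, hence by \hyperref[2.11]{2.11} it is already $L_{F(L_E(S^0))}(S^0)$ for the smashing class $\ip{F(L_E(S^0))}$; the only thing left to check is the acyclics-level identity $\ip{F(L_E(S^0))}=\ip{F(E)}$, which yields characterization (6) of \hyperref[2.7]{2.7} for $F(E)$ with no locality verification against $F(E)$ at all.

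This matters because your proposed mechanism for that locality step in (3)--(4), ``lift an $F(E)$-acyclic back to $\mc Z_E$ via the relevant adjoints,'' does not literally work for $\Phi^G$: geometric fixed points has no left adjoint, and Remark \hyperref[3.13]{3.13} shows $\Phi^G(L_E(S^0))$ is genuinely \emph{not} $\Phi^G(E)$-local when $E$ fails to be smashing, so no formal adjunction argument can produce it. The correct ``lift'' is $Z\mapsto \tilde{E}\mc P\smsh i_*Z$, which is not an adjoint but satisfies $Z\smsh\Phi^G(E)\simeq *$ iff $\tilde{E}\mc P\smsh i_*Z\smsh E\simeq *$; one then invokes $\ip{E}=\ip{L_E(S^0)}$ upstairs and pushes back down. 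Your sketch gestures at this (``apply the source equality and push down''), so the idea is present, but the adjoint framing is the one place the plan as written would stall; once (3) is done this way, (4), (5), and (6) reduce to it exactly as you describe. For (7), the paper likewise compresses your two-sided check (that each $\Phi^H(Z_E(X))$ is $E^G$-acyclic, and that the cofiber is $E^G$-local) into the single Bousfield class identity $\ip{E^G}=\ip{\bigvee_{H\subset G}\Phi^H(E)}$, after which \hyperref[2.12]{2.12} hands you both the smashing property and the stated formula for $Z_{E^G}(X)$.
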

\begin{proof}
In all cases, we have a smashing spectrum $E$ and therefore $\ip{E}=\ip{L_E(S^0)}$. If $F$ is one of the functors listed in items (1)-(6), it is symmetric monoidal, and hence $F(L_E(S^0))$ is a right idempotent, so it suffices to show $\ip{F(L_E(S^0))}=\ip{F(E)}$.
For (1), the relation $\Phi^H\circ i_*\simeq\id_{Sp}$ for all $H\subset G$ gives
\[\ip{\Phi^H(i_*L_E(S^0))}=\ip{L_E(S^0)}=\ip{E}=\ip{\Phi^H(i_*E)}\implies \ip{i_*L_E(S^0)}=\ip{i_*E}\]
For (2), note that
\[Z\in \mc Z_{i^G_HL_E(S^0)}\iff(G_+\smsh_HZ)\smsh L_E(S^0)\simeq *\iff (G_+\smsh_HZ)\smsh E\simeq *\]
since $E$ is smashing. $i^G_HE$ is shown to have the same acyclics by an identical argument. For (3), we have
\aln{
Z\smsh \Phi^G(E)\simeq*&\iff \tilde{E}\mc P\smsh i_*Z\smsh E\simeq*\\
&\iff \tilde{E}\mc P\smsh i_*Z\smsh L_E(S^0)\simeq*\\
&\iff Z\smsh\Phi^G(L_E(S^0))\simeq *
}
For (4), if $H$ is any subgroup of $G'$, the relation $\Phi^H\circ f^*=\Phi^{f(H)}$ gives
 \[\ip{\Phi^H(f^*L_E(S^0))}=\ip{\Phi^{f(H)}L_E(S^0)}=\ip{L_{\Phi^{f(H)}(E)}(S^0)}=\ip{\Phi^{f(H)}(E)}=\ip{\Phi^H(f^*E)}\]
by applying case (3). For (5), \hyperref[3.11]{3.11} gives 
\aln{\ip{\Phi^K(N_H^G(L_E(S^0)))}&=\ip{\bigwedge\limits_{[g]\in K\backslash G/H}\Phi^{K^g\cap H}(L_E(S^0))}\\
&=\bigwedge\limits_{[g]\in K\backslash G/H}\ip{\Phi^{K^g\cap H}(L_E(S^0))}\\
&=\bigwedge\limits_{[g]\in K\backslash G/H}\ip{\Phi^{K^g\cap H}(E)}\\
&=\ip{\Phi^K(N_H^G(E))}
}
for any subgroup $K\subset G$, again by applying case (3). For (6), if $T=G/H$, the result follows by combining cases (2) and (5), and the general case follows from \hyperref[2.12]{2.12}. The final remark follows in these cases from the localizations being smashing, as then the condition may be checked on the sphere spectrum.

For the genuine fixed points functor (7), we have $\mc Z_{E^G}\subset Z_{E^H}$ for all $H\subset G$, by assumption, hence 
\[Z\in\mc Z_{E^G}\iff i_*Z\in \mc Z_{E}\]
and this holds if and only if $Z\smsh\Phi^H(E)\simeq*$ for all $H\subset G$. We find:
\[\ip{E^G}=\ip{\bigvee\limits_{H\subset G}\Phi^H(E)}\]
and the claim follows as in \hyperref[2.12]{2.12}. The assumptions hold for $E$ a ring spectrum because one has restriction ring maps $E^G\to E^H$ given by applying $(-)^G$ to the map of rings
\[E\to F({G/H}_+,S^0)\smsh E\]
\end{proof}
\begin{remark}\label{3.13}
We needed to assume $E$ is smashing in \hyperref[3.12]{3.12} to establish that $\Phi^G(L_E(S^0))$ is $\Phi^G(E)$-local, whereas with $i^G_H$ and $i_*$, we could exploit the existence of a left adjoint to get around this assumption. In fact, it is not necessarily true that $\Phi^G(L_E(S^0))$ is $\Phi^G(E)$-local without this assumption. For example, if $G=C_2$, $E={C_2}_+$, $X=S^0$, then the left hand side is a point, and the right hand is $(S^0)^{tC_2}$. This example also shows us that the converse to case (3) of \hyperref[3.12]{3.12} is false, i.e. we cannot detect whether $E$ is smashing just by knowing that $\Phi^HE$ is smashing for all $H\subset G$.
\end{remark}
\begin{corollary}\label{3.14}
We have the following characterizations of local objects for smashing localizations:
\begin{enumerate}
\item If $E\in Sp^G$ is smashing, $X\in Sp^G$ is $E$-local if and only if $\Phi^H(X)$ is $\Phi^H(E)$-local for all $H\subset G$.
\item If $E\in Sp$ is smashing, $X\in Sp^G$ is $i_*E$-local if and only if $\Phi^H(X)$ is $E$-local for all $H\subset G$.
\item If $f:G\to G'$ is a group homomorphism, and $E\in Sp^{G'}$ is smashing, $X\in Sp^G$ is $f^*E$-local if and only if $\Phi^H(X)$ is $\Phi^{f(H)}(E)$-local for all $H\subset G$.
\item If $H\subset G$, and $E\in Sp^H$ is smashing, $X\in Sp^G$ is $N_H^GE$-local if and only if for all $K\subset G$, and for all $[g]\in K\backslash G/H$, $\Phi^K(X)$ is $\Phi^{K^g\cap H}(E)$-local.
\item If $E\in Sp^G$ is smashing, and $\mc Z_{E^G}\subset\mc Z_{E^H}$ for all $H\subset G$ (e.g. if $E$ is a ring spectrum), then $X\in Sp^G$ is $E^G$-local if and only if $i_*X$ is $E$-local.
\end{enumerate}
\end{corollary}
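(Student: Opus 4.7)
The approach is to establish (1) as the main case and obtain (2)--(5) by combining it with the formulas for geometric fixed points recorded in 3.11 and 3.12. For (1), the smashing hypothesis on $E$ implies that $X \in \mc L_E$ if and only if $Z_E(S^0) \smsh X \simeq *$, where $Z_E(S^0) \to S^0 \to L_E(S^0)$ is the idempotent triangle associated to $E$ via 2.11. Since the geometric fixed point functors are jointly conservative and symmetric monoidal, this vanishing is equivalent to $\Phi^H(Z_E(S^0)) \smsh \Phi^H(X) \simeq *$ for every $H \subset G$. The key step is the identification $\Phi^H(Z_E(S^0)) \simeq Z_{\Phi^H(E)}(S^0)$: $\Phi^H$ preserves idempotent triangles, and $\Phi^H(L_E(S^0))$ is a right idempotent with Bousfield class $\ip{\Phi^H(E)}$ by the argument in 3.12 (3), so the uniqueness portion of 2.11 forces it to agree with $L_{\Phi^H(E)}(S^0)$. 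Because $\Phi^H(E)$ is itself smashing, the vanishing then reduces to $\Phi^H(X)$ being $\Phi^H(E)$-local.

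Parts (2) and (3) follow by applying (1) to $i_*E$ and $f^*E$ and using the identities $\Phi^H \circ i_* \simeq \id$ and $\Phi^H \circ f^* \simeq \Phi^{f(H)}$. Part (4) applies (1) to $N_H^G E$ and invokes the diagonal equivalence $\Phi^K(N_H^G E) \simeq \bigwedge_{[g] \in K \backslash G/H} \Phi^{K^g \cap H}(E)$ of 3.11; the needed auxiliary fact is that for a finite collection of smashing spectra $F_i$ one has $\mc L_{F_1 \smsh \cdots \smsh F_n} = \bigcap_i \mc L_{F_i}$, which follows from 2.12 by iterating the smashing equivalence $X \simeq L_{F_i}(S^0) \smsh X$. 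For (5), Proposition 3.12 (7) gives that $E^G$ is smashing with $\ip{E^G} = \ip{\bigvee_H \Phi^H(E)}$, so by the nonequivariant version of (1) an $E^G$-local is precisely a spectrum that is $\Phi^H(E)$-local for every $H$; applying (1) to $i_*X$ and $E$ yields the same condition since $\Phi^H(i_*X) \simeq X$ as spectra.

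The main obstacle is the idempotent triangle identification $\Phi^H(L_E(S^0)) \simeq L_{\Phi^H(E)}(S^0)$ used in step (1); once it is in hand, the remaining parts unwind routinely from the preservation formulas already established for the various change of group functors. The auxiliary lemma for (4) is a short application of 2.7 and 2.12 that reduces locality for a smash of smashing spectra to locality for each factor, and plays no role beyond bookkeeping.
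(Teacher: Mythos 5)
Your proofs of (1)--(4) are correct and amount to the paper's own argument: the paper proves (1) by combining joint conservativity of the $\Phi^H$ with the identity $\Phi^H(L_E(X))\simeq L_{\Phi^H(E)}(\Phi^H(X))$ from \hyperref[3.12]{3.12}, which is exactly your identification $\Phi^H(Z_E(S^0))\simeq Z_{\Phi^H(E)}(S^0)$ read off the other leg of the idempotent triangle. Your auxiliary lemma $\mc L_{F_1\smsh\cdots\smsh F_n}=\bigcap_i\mc L_{F_i}$ for smashing $F_i$ is also correct: one inclusion follows from $\mc Z_{F_i}\subset\mc Z_{F_1\smsh\cdots\smsh F_n}$, the other from iterating $X\simeq L_{F_i}(S^0)\smsh X$, so (4) goes through.

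Part (5) has a genuine gap. The claim that ``an $E^G$-local is precisely a spectrum that is $\Phi^H(E)$-local for every $H$'' does not follow from ``the nonequivariant version of (1)'' (which, for the trivial group, is a tautology). What you actually need is the wedge analogue of your smash-product lemma, namely $\mc L_{F_1\vee\cdots\vee F_n}=\bigcap_i\mc L_{F_i}$ for smashing $F_i$, and this is false. By \hyperref[2.12]{2.12} one has $Z_{F_1\vee F_2}(S^0)\simeq Z_{F_1}(S^0)\smsh Z_{F_2}(S^0)$, so $X$ is $(F_1\vee F_2)$-local iff $Z_{F_1}(S^0)\smsh Z_{F_2}(S^0)\smsh X\simeq*$ --- a condition already implied by the vanishing of a \emph{single} factor $Z_{F_i}(S^0)\smsh X$. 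Hence $\mc L_{F_1\vee F_2}\supset\mc L_{F_1}\cup\mc L_{F_2}$, which is strictly larger than the intersection unless the acyclic classes are nested. Concretely, for distinct primes $p,q$ the spectrum $S^0_{(p)}$ is $(S^0_{(p)}\vee S^0_{(q)})$-local but not $S^0_{(q)}$-local, since $\Sigma^{-1}S^0/p\in\mc Z_{S^0_{(q)}}$ while $[\Sigma^{-1}S^0/p,S^0_{(p)}]\cong\Z/p\ne0$. So your argument only yields the implication ``$i_*X$ is $E$-local $\implies$ $X$ is $E^G$-local.'' Worse, the obstruction is not just in your proof: taking $p,q$ odd and $E=(E{C_2}_+\smsh i_*S^0_{(p)})\vee(\tilde{E}C_2\smsh i_*S^0_{(q)})$, which is smashing with $\Phi^e(E)\simeq S^0_{(p)}$, $\Phi^{C_2}(E)\simeq S^0_{(q)}$ and satisfies the fixed-point hypothesis of (5), the spectrum $X=S^0_{(p)}$ is $E^{C_2}$-local while $i_*X$ is not $E$-local. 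The converse direction of (5) therefore requires the classes $\mc Z_{\Phi^H(E)}$ to be nested (as they are in the chromatic examples); the paper's dismissal of (5) as ``immediate from (1)'' glosses over exactly the step at which your argument breaks.
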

\begin{proof}
For (1), $X$ is $E$-local iff the map $X\to L_E(X)$ is an equivalence, but this is true iff 
\[\Phi^H(X)\to \Phi^H(L_E(X))\simeq L_{\Phi^H(E)}(X)\]
is an equivalence for all $H$, i.e. $\Phi^H(X)$ is $\Phi^H(E)$-local for all $H$. The rest are immediate consequences of (1).
\end{proof}

The norm is unique among the above functors in that it does not in general preserve cofiber sequences. However, we have the following interesting corollary of \hyperref[3.12]{3.12}:
\begin{corollary}\label{3.15}
If $G$ is abelian, $N_H^G$ preserves idempotent cofiber sequences. That is, if $e\to S^0\to f\to\Sigma e$ is an idempotent triangle in $Sp^H$, then $N_H^G(e)\to S^0\to N_H^G(f)$ is a cofiber sequence in $Sp^G$ such that
\[N_H^G(e)\to S^0\to N_H^G(f)\to\Sigma N_H^G(e)\]
is an idempotent triangle.
\end{corollary}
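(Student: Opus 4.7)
The plan is to exploit Proposition \hyperref[3.12]{3.12}(5), which gives that $N_H^G f$ is smashing. Since $f \smsh f \simeq f$, $f$ is a ring spectrum, and since $N_H^G$ is strong symmetric monoidal, so is $N_H^G f$, which is therefore $N_H^G f$-local by Lemma \hyperref[2.5]{2.5}. Hence $L_{N_H^G f}(S^0) \simeq N_H^G f$, and by Corollary \hyperref[2.11]{2.11} the associated idempotent triangle takes the form $Z \to S^0 \xrightarrow{N_H^G \phi} N_H^G f \to \Sigma Z$ with $Z = \mathrm{fib}(N_H^G \phi)$. The task then reduces to identifying $Z$ with $N_H^G e$.

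To construct a comparison map, observe that $N_H^G \phi \circ N_H^G \psi = N_H^G(\phi \circ \psi) = N_H^G(0) = 0$, since $N_H^G$ is pointed (preserving the zero object, and hence zero morphisms, which factor through it). The universal property of the fiber then yields a canonical $\alpha : N_H^G e \to Z$ lifting $N_H^G \psi$. The main obstacle is verifying that $\alpha$ is an equivalence, since the norm's failure to preserve cofiber sequences in general makes direct arguments unavailable; this is precisely where the abelian hypothesis becomes essential.

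By joint conservativity of $\{\Phi^K\}_{K \subset G}$, it suffices to show $\Phi^K \alpha$ is an equivalence for each $K \subset G$. When $G$ is abelian, $K^g = K$ for every $g$, so Proposition \hyperref[3.11]{3.11} collapses to $\Phi^K N_H^G X \simeq (\Phi^{K \cap H} X)^{\wedge m}$ with $m = |G/(KH)|$. Writing $e' = \Phi^{K \cap H} e$ and $f' = \Phi^{K \cap H} f$, Proposition \hyperref[3.12]{3.12}(3) gives an idempotent triangle $e' \xrightarrow{\psi'} S^0 \xrightarrow{\phi'} f'$ in $Sp$. The identities $e' \smsh e' \simeq e'$ and $f' \smsh f' \simeq f'$ inductively yield equivalences $(e')^{\wedge m} \simeq e'$ and $(f')^{\wedge m} \simeq f'$ via iterated multiplication, and the unital axioms for these (left/right) idempotents ensure that under the induced identifications, $\Phi^K N_H^G \psi$ and $\Phi^K N_H^G \phi$ correspond to $\psi'$ and $\phi'$ respectively.

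Applying $\Phi^K$ to the fiber sequence $Z \to S^0 \to N_H^G f$ gives $\Phi^K Z \simeq \mathrm{fib}(S^0 \to f') \simeq e'$, so $\Phi^K \alpha$ corresponds to a self-map $g : e' \to e'$ satisfying $\psi' \circ g = \psi'$. Since $e' \smsh f' \simeq 0$, we have $[e', f'] = 0 = [\Sigma e', f']$, and the long exact sequence obtained by applying $[e', -]$ to $e' \to S^0 \to f'$ yields $[e', e'] \xrightarrow{\sim} [e', S^0]$ via $\psi' \circ (-)$, forcing $g = \mathrm{id}_{e'}$. Thus $\Phi^K \alpha$ is an equivalence for every $K$, so $\alpha$ is an equivalence, and the resulting triangle is idempotent because $N_H^G e \smsh N_H^G f \simeq N_H^G(e \smsh f) \simeq N_H^G(0) \simeq 0$.
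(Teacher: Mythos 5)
Your proof is correct and follows essentially the same strategy as the paper's: identify $N_H^G(f)$ with $L_{N_H^G f}(S^0)$ via symmetric monoidality, produce the comparison map into the canonical idempotent triangle from the nullhomotopy $N_H^G(0)=0$, and check it is an equivalence on geometric fixed points using \hyperref[3.11]{3.11} together with the abelian hypothesis to collapse $K^g$ to $K$. The only (cosmetic) difference is the final step: the paper shows $\Phi^K$ of the top row is itself an idempotent cofiber sequence via \hyperref[2.12]{2.12}, whereas you identify both sides with $e'=\Phi^{K\cap H}(e)$ and pin down the induced self-map as the identity using $[e',f']=[\Sigma e',f']=0$ --- both arguments rest on the same idempotency collapse $(e')^{\smsh m}\simeq e'$.
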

\begin{proof}
By \hyperref[2.11]{2.11}, every idempotent triangle in $Sp^H$ is of the form
\[Z_E(S^0)\to S^0\to L_E(S^0)\to\Sigma Z_E(S^0)\]
We will show that the sequence
\[N_H^G(Z_E(S^0))\to S^0\to N_H^G(L_E(S^0))\]
is equivalent to the idempotent cofiber sequence
\[Z_{N_H^GE}(S^0)\to S^0\to L_{N_H^GE}(S^0)\]
Note that since $N_H^G(*)=*$, $N_H^G(-)$ sends the zero map to the zero map. Therefore the composite $N_H^G(Z_E(S^0))\to S^0\to N_H^G(L_E(S^0))$ is null, and so we have a commutative diagram
\[
\btz
N_H^G(Z_E(S^0))\arrow[d,dashed,"f"]\arrow[r]&S^0\arrow[d,"="]\arrow[r]&N_H^G(L_E(S^0))\arrow[d,"\simeq"]\\
Z_{N_H^GE}(S^0)\arrow[r]& S^0\arrow[r]&L_{N_H^GE}(S^0)
\etz
\]
To show that $f$ is an equivalence, it suffices to show that $\Phi^K(f)$ is an equivalence for all $K\subset G$, hence it suffices to show $\Phi^K(-)$ of the top row is a cofiber sequence. This gives
\[\bigwedge\limits_{[g]\in K\backslash G/H}\Phi^{K\cap H}(Z_E(S^0))\to S^0\to \bigwedge\limits_{[g]\in K\backslash G/H}\Phi^{K\cap H}(L_E(S^0))\]
by \hyperref[3.11]{3.11}, where we have used that $G$ is abelian so that $K^g=K$. By \hyperref[3.12]{3.12}, this may be further identified with
\[\bigwedge\limits_{[g]\in K\backslash G/H}Z_{\Phi^{K\cap H}(E)}(S^0)\to S^0\to \bigwedge\limits_{[g]\in K\backslash G/H}L_{\Phi^{K\cap H}(E)}(S^0)\]
By \hyperref[2.12]{2.12}, this is the idempotent cofiber sequence associated to $\mc Z_{\Phi^{K\cap H}(E)}$, as
\[\ip{\Phi^{K\cap H}(E)}=\ip{\bigvee\limits_{[g]\in K\backslash G/H}\Phi^{K\cap H}(E)}=\ip{\bigwedge\limits_{[g]\in K\backslash G/H}\Phi^{K\cap H}(E)}\]
since $\Phi^{K\cap H}(E)$ is smashing.
\end{proof}

\begin{remark}\label{3.16}
It doesn't make sense to ask whether $N_H^G$ preserves idempotent \ti{triangles} in the sense of \cite{Balmer} because $N_H^G(S^1)\simeq S^{\tx{Ind}_H^G(1)}$, and so applying $N_H^G$ to the idempotent triangle
\[e\to S^0\to f\to\Sigma e\]
yields the sequence of maps
\[N_H^G(e)\to S^0\to N_H^G(f)\to S^{\tx{Ind}_H^G(1)}\smsh N_H^G(e)\]
which is not a distinguished triangle in $Sp^G$ unless $H=G$ or $e\simeq*$. We have only shown that, when $G$ is abelian,
\[N_H^G(e)\to S^0\to N_H^G(f)\]
is a cofiber sequence, and in particular the first two morphisms in an idempotent triangle. 
\end{remark}

We now give a counterexample to the above claim in the general case when $G$ is not necessarily abelian.
\begin{proposition}\label{3.17}
Fix an inclusion $C_2\inj\Sigma_3$. The corresponding functor $N_{C_2}^{\Sigma_3}:Sp^{C_2}\to Sp^{\Sigma_3}$ does not preserve all idempotent cofiber sequences.
\end{proposition}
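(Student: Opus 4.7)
The plan is to exhibit a concrete idempotent cofiber sequence in $Sp^{C_2}$ whose image under $N_{C_2}^{\Sigma_3}$ fails to be a cofiber sequence in $Sp^{\Sigma_3}$. The natural candidate is the isotropy separation sequence
\[E{C_2}_+ \to S^0 \to \tilde{E}C_2,\]
which is an idempotent cofiber sequence in $Sp^{C_2}$: one checks $E{C_2}_+ \smsh \tilde{E}C_2 \simeq *$ on geometric fixed points, and $\tilde{E}C_2$ is a ring spectrum, hence self-local by \hyperref[2.5]{2.5}. Applying $N_{C_2}^{\Sigma_3}$ produces a sequence
\[N_{C_2}^{\Sigma_3}(E{C_2}_+) \to S^0 \to N_{C_2}^{\Sigma_3}(\tilde{E}C_2)\]
in $Sp^{\Sigma_3}$ whose composite is null, since $N_{C_2}^{\Sigma_3}$ sends zero maps to zero maps.

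To show this is not a cofiber sequence, I would apply the geometric fixed point functor $\Phi^{C_2}$ at the chosen subgroup $C_2 \subset \Sigma_3$ and invoke Proposition \hyperref[3.11]{3.11}:
\[\Phi^{C_2} N_{C_2}^{\Sigma_3}(X) \simeq \bigwedge_{[g]\in C_2\backslash \Sigma_3/C_2} \Phi^{C_2^{g}\cap C_2}(X).\]
Since $C_2$ is not normal in $\Sigma_3$, there are precisely two double cosets in $C_2\backslash\Sigma_3/C_2$: the trivial one, contributing $\Phi^{C_2}(X)$, and one represented by any $g\notin C_2$ (so that $C_2^g$ is a different order-two subgroup of $\Sigma_3$ and therefore $C_2^g\cap C_2=\{e\}$), contributing $\Phi^{\{e\}}(X)$. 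Hence $\Phi^{C_2} N_{C_2}^{\Sigma_3}(X)\simeq \Phi^{C_2}(X)\smsh \Phi^{\{e\}}(X)$.

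Evaluating at $X=E{C_2}_+$ (where $\Phi^{C_2}=*$) and $X=\tilde{E}C_2$ (where $\Phi^{\{e\}}=*$), both outer terms become contractible. If the norm of our idempotent triangle were a cofiber sequence, applying $\Phi^{C_2}$ would yield $\ast\to S^0\to \ast$, whose actual cofiber is $S^0$ rather than $*$, a contradiction. The main obstacle is essentially bookkeeping: correctly identifying the two double cosets and the mixed intersection subgroups $C_2^g\cap C_2$. Conceptually, this is the precise failure absent in \hyperref[3.15]{3.15}: in the abelian setting, $K^g\cap H=K\cap H$ for every $g$, so the smash in \hyperref[3.11]{3.11} degenerates to a power of a single $\Phi^{K\cap H}(X)$ and idempotence is preserved; in the non-abelian case, the smash $\Phi^{C_2}(X)\smsh\Phi^{\{e\}}(X)$ mixes distinct geometric fixed points of the same spectrum, annihilating both idempotents simultaneously.
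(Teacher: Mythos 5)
Your proof is correct and uses the same counterexample as the paper: the isotropy separation triangle $E{C_2}_+\to S^0\to\tilde{E}C_2$, whose norm the paper identifies as $E{\mathcal F_{C_3}}_+\to S^0\to\tilde{E}\mathcal P$ and declares not to be a cofiber sequence. Your explicit computation of $\Phi^{C_2}$ via the double coset formula, yielding $*\to S^0\to *$, is exactly the verification the paper leaves implicit.
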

\begin{proof}
Consider the idempotent cofiber sequence
\[E{C_2}_+\to S^0\to \tilde{E}C_2\]
in $Sp^{C_2}$. Applying $N_{C_2}^{\Sigma_3}$ yields the sequence
\[E{\mc F_{C_3}}_+\to S^0\to \tilde{E}\mc P\]
which is not a cofiber sequence.
\end{proof}

\subsection{Induction and Smashing}\label{sec3.3} By far the most interesting change of group functor with respect to smashing localizations is induction, because it is not monoidal, and hence we treat it separately. We find that induced $G$-spectra $G_+\smsh_HE$ are rarely smashing, though we give a necessary and sufficient condition for $G_+\smsh_HE$ to be smashing.
\begin{proposition}\label{3.18}
Suppose $H\subset G$, and $E\in Sp^H$. Then $G_+\smsh_HE$ is smashing if and only if $\Phi^K(L_{G_+\smsh_HE}(S^0))\simeq*$ for all $K\notin\mc F_H$ and $i^G_H(G_+\smsh_HE)$ is smashing.
\end{proposition}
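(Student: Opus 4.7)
The plan is to pass back and forth through Proposition \hyperref[3.7]{3.7} together with the auxiliary fact that a $G$-spectrum $X$ is $H$-cofree if and only if $\Phi^K(X)\simeq *$ for all $K\notin\mc F_H$. This equivalence is not stated explicitly above, so I would first record it: using that $\Phi^K$ commutes with internal function spectra on suspension spectra and with homotopy limits, one computes $\Phi^K F(E\mc F_{H+},X)\simeq F((E\mc F_H)^K_+,\Phi^K X)$, which equals $\Phi^K X$ for $K\in\mc F_H$ (since $(E\mc F_H)^K$ is contractible) and $*$ for $K\notin\mc F_H$ (since then $(E\mc F_H)^K$ is empty and $\emptyset_+$ is the zero object). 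Thus the cofree map $X\to F(E\mc F_{H+},X)$ is an equivalence precisely when $\Phi^K X\simeq *$ for all $K\notin\mc F_H$.

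For the forward direction, assume $G_+\smsh_H E$ is smashing. Then $i^G_H(G_+\smsh_H E)$ is smashing immediately from Proposition \hyperref[3.12]{3.12}(2). Moreover, $L_{G_+\smsh_H E}(S^0)$ is $G_+\smsh_H E$-local, and Proposition \hyperref[3.7]{3.7} then forces it to be $H$-cofree, which by the observation above gives $\Phi^K(L_{G_+\smsh_H E}(S^0))\simeq *$ for all $K\notin\mc F_H$.

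For the reverse direction, set $R:=L_{G_+\smsh_H E}(S^0)$. By Proposition \hyperref[2.7]{2.7}, it suffices to show that for every $X\in Sp^G$ the object $X\smsh R$ is $G_+\smsh_H E$-local, and by Proposition \hyperref[3.7]{3.7} this splits into the two conditions that (a) $X\smsh R$ is $H$-cofree and (b) $i^G_H(X\smsh R)$ is $i^G_H(G_+\smsh_H E)$-local. Condition (a) reduces via the auxiliary fact and the symmetric monoidality of $\Phi^K$ to $\Phi^K(R)\simeq *$ for $K\notin\mc F_H$, which is the first hypothesis. For (b), Proposition \hyperref[3.2]{3.2}(2) identifies $i^G_H(R)\simeq L_{i^G_H(G_+\smsh_H E)}(S^0)$, and since $i^G_H(G_+\smsh_H E)$ is smashing by hypothesis, $i^G_H(X\smsh R)\simeq i^G_H(X)\smsh L_{i^G_H(G_+\smsh_H E)}(S^0)\simeq L_{i^G_H(G_+\smsh_H E)}(i^G_H(X))$ is local, as required.

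The main obstacle is the bookkeeping needed to phrase $H$-cofreeness in terms of geometric fixed points, so that the stated hypothesis $\Phi^K(L_{G_+\smsh_H E}(S^0))\simeq *$ can be fed directly into the cofreeness clause of Proposition \hyperref[3.7]{3.7}; once that translation is in place, both implications collapse to routine consequences of results already established in Sections \hyperref[sec2]{2} and \hyperref[sec3.1]{3.1}.
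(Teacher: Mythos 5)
Your argument hinges on the ``auxiliary fact'' that $X$ is $H$-cofree if and only if $\Phi^K(X)\simeq *$ for all $K\notin\mc F_H$, which you justify via the formula $\Phi^K F(E{\mc F_H}_+,X)\simeq F(((E\mc F_H)^K)_+,\Phi^K X)$. That formula is false: geometric fixed points commute with $F(A,-)$ only when $A$ is dualizable, and $E{\mc F_H}_+$ is an infinite complex. The error term is precisely a generalized Tate construction, and its possible non-vanishing is the entire content of the proposition. Concretely, take $G=C_2$, $H=\{e\}$, $E=S^0$: then $F(E{C_2}_+,S^0)$ is cofree, yet $\Phi^{C_2}F(E{C_2}_+,S^0)\simeq (S^0)^{tC_2}$ is not contractible (cf.\ \hyperref[3.13]{3.13}, \hyperref[3.22]{3.22}, \hyperref[3.23]{3.23}). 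This breaks your forward direction: you deduce the vanishing of $\Phi^K(L_{G_+\smsh_HE}(S^0))$ from locality and cofreeness alone, never invoking the smashing hypothesis for that clause, and the example just given (where ${C_2}_+$ is local-izing but not smashing) shows the conclusion genuinely fails without it. The converse implication of your auxiliary fact also fails: $\Phi^K(Y)\simeq *$ off $\mc F_H$ only says $Y\simeq E{\mc F_H}_+\smsh Y$, and such $Y$ need not be $H$-cofree (e.g.\ $E{C_2}_+$ itself is not cofree; the obstruction is again $(S^0)^{tC_2}$, since $F(E{C_2}_+,E{C_2}_+)\simeq F(E{C_2}_+,S^0)$). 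This breaks step (a) of your reverse direction, where you need $X\smsh R$ to be $H$-cofree knowing only that its geometric fixed points vanish off $\mc F_H$.

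The paper's proof avoids translating cofreeness into fixed-point vanishing altogether. For the forward direction it uses the smashing hypothesis in the form $\mc Z_{G_+\smsh_HE}=\mc Z_{L_{G_+\smsh_HE}(S^0)}$, applied to the acyclic object $\tilde E\mc F$ where $\mc F$ is the smallest family containing $H$ and all proper subgroups of $K$; restricting to $K$ yields $\tilde E\mc P\smsh i^G_KL_{G_+\smsh_HE}(S^0)\simeq *$, which is exactly the $\Phi^K$-vanishing. For the converse it verifies characterization (3) of \hyperref[2.7]{2.7} (closure of locals under coproducts) by checking the map $\bigvee_i Y_i\to L_{G_+\smsh_HE}(\bigvee_i Y_i)$ on each $\Phi^K$: for $K\notin\mc F_H$ both sides vanish because locals are modules over $L_{G_+\smsh_HE}(S^0)$, and for $K\in\mc F_H$ the functor $\Phi^K$ factors through $i^G_H$, where the restricted localization is smashing by hypothesis. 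Your step (b) is essentially this last point and is fine; it is the cofreeness bookkeeping that cannot be repaired as stated.
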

\begin{proof}
Suppose $G_+\smsh_HE$ is smashing, then any restriction of it is also smashing. If $K\notin\mc F_H$,  let $\mc F$ be the smallest family containing $H$ and every proper subgroup of $K$. It follows that $\tilde{E}\mc F\in\mc Z_{G/H_+}\subset \mc Z_{G_+\smsh_HE}$, as $i^G_H\tilde{E}\mc F\simeq*$. Since $G_+\smsh_HE$ is smashing, $\mc Z_{G_+\smsh_HE}=\mc Z_{L_{G_+\smsh_HE}(S^0)}$, so that
\[\tilde{E}\mc F\smsh L_{G_+\smsh_HE}(S^0)\simeq*\implies i^G_K\tilde{E}\mc F\smsh i^G_KL_{G_+\smsh_HE}(S^0)\simeq*\]
Since $\mc F$ contains every proper subgroup of $K$, but not $K$ itself (as $K\notin \mc F_H$), we have $i^G_K\tilde{E}\mc F\simeq\tilde{E}\mc P$, where the latter is the universal $K$-space for the family of proper subgroups of $K$. This by definition implies $\Phi^K(L_{G_+\smsh_HE}(S^0))\simeq*$.

Conversely, suppose $\Phi^K(L_{G_+\smsh_HE}(S^0))\simeq*$ for all $K\notin\mc F_H$ and $i^G_H(G_+\smsh_HE)$ is smashing. To show $L_{G_+\smsh_HE}$ is smashing, it suffices to show that $\mc L_{G_+\smsh_HE}$ is closed under arbitrary coproducts. We note first that if $Y\in \mc L_{G_+\smsh_HE}$, $Y$ is a module over $L_{G_+\smsh_HE}(S^0)$, hence $\Phi^K(Y)\simeq*$ for all $K\notin\mc F_H$. Let $\{Y_i\}$ be a set of $G_+\smsh_HE$-locals. Consider the map
\[\phi:\bigvee\limits_i Y_i\to L_{G_+\smsh_HE}\bg(\bigvee\limits_iY_i\bg)\]
It suffices to show this map is an equivalence, and it becomes an equivalence after applying $\Phi^K$ for all $K\notin\mc F_H$ since it is the identity map of a point, up to equivalence. If $K\in\mc F_H$, then $gKg^{-1}=H'$ for some $g\in G$ and some $H'\subset H$. Since $\Phi^K(-)\simeq\Phi^{H'}(-)$, it suffices to assume $K\subset H$, in which case $\Phi^K$ factors through the functor $i^G_H$, and $i^G_H(\phi)$ is the map
\[\bigvee\limits_ii^G_HY_i\to L_{i^G_HG_+\smsh_HE}\bg(\bigvee\limits_ii^G_HY_i\bg)\]
This is an equivalence as $i^G_H(G_+\smsh_HE)$ is assumed smashing and $i^G_HY_i$ is $i^G_H(G_+\smsh_HE)$-local.
\end{proof}
\begin{corollary}\label{3.19}
If $H\subset G$ is normal, and $E\in Sp^H$ is smashing, then $G_+\smsh_HE$ is smashing if and only if $\Phi^K(L_{G_+\smsh_HE}(S^0))\simeq*$ for all $K\subset H$.
\end{corollary}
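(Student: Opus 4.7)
The plan is to derive the corollary from Proposition 3.18 by showing that, under the additional hypotheses of normality of $H$ and smashness of $E$, the second condition of 3.18---that $i^G_H(G_+\smsh_H E)$ be smashing---is automatic. The characterization of smashness for $G_+\smsh_H E$ then collapses to the geometric fixed points condition alone, which is the content of 3.19.

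First I would unpack the family $\mc F_H$ in the normal case. Since conjugation by any $g \in G$ fixes $H$, every subgroup subconjugate to $H$ already sits inside $H$, so $\mc F_H$ is exactly the collection of subgroups of $H$, and the hypothesis ``$K \notin \mc F_H$'' appearing in Proposition 3.18 becomes the condition ``$K \not\subset H$'' on the range of $K$. Next I would compute $i^G_H(G_+\smsh_H E)$ via the double coset formula. For normal $H$, the double coset set $H\backslash G/H$ collapses to $G/H$ and each intersection $H \cap gHg^{-1}$ simplifies to $H$, yielding an equivalence $i^G_H(G_+\smsh_H E) \simeq \bigvee_{[g] \in G/H} {}^g E$, where ${}^g E = c_g^* E$ is the pullback along the conjugation automorphism $c_g : H \to H$ (the same decomposition used in Corollary 3.9).

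Proposition 3.12(4) applied to the group homomorphism $c_g$ then shows each Weyl conjugate ${}^g E$ is smashing, and Corollary 2.12 guarantees the finite wedge $\bigvee_{[g]} {}^g E$ is itself smashing. Thus the second hypothesis of 3.18 is automatic, and 3.18 immediately yields the corollary. There is no substantial obstacle here; the entire content is the combination of the double coset formula, the preservation of smashness under conjugation (Proposition 3.12(4)), and the closure of smashing Bousfield classes under finite wedges (Corollary 2.12), which together reduce Proposition 3.18 to the simpler characterization in 3.19.
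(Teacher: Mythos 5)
Your proposal is correct and matches the paper's argument: the paper likewise deduces the corollary from \hyperref[3.18]{3.18} by using the double coset formula for normal $H$ to identify $i^G_H(G_+\smsh_H E)$ with $\bigvee_{[g]\in G/H}{}^gE$, noting each Weyl conjugate is smashing and invoking \hyperref[2.12]{2.12} for the wedge. No gaps.
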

\begin{proof}
This follows immediately from the previous proposition along with the observation that
\[\ip{i^G_H(G_+\smsh_HE)}=\ip{\bigvee\limits_{[g]\in G/H}{^gE}}=\bigvee\limits_{[g]\in G/H}\ip{^gE}\]
is a smashing Bousfield class by \hyperref[2.12]{2.12} since $^gE$ is smashing for all $g$.
\end{proof}
When $H\subset G$ is normal, we arrive at a somewhat explicit formula for an induced localization, which we can interpret as follows: induced smashing localizations are smashing \ti{after} $H$-cofree completion. When $H=\{e\}$, this can be further related to the corresponding trivial localization.
\begin{proposition}\label{3.20}
If $H\subset G$ is normal, $E\in Sp^H$ is smashing, and $X\in Sp^G$, then 
\[L_{G_+\smsh_HE}(X)\simeq L_{{G/H}_+}(L_{G_+\smsh_HE}(S^0)\smsh X)\simeq F(E{\mc{F}_H}_+,L_{G_+\smsh_HE}(S^0)\smsh X)\]
\end{proposition}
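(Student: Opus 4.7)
The second equivalence is immediate from \hyperref[3.5]{Lemma 3.5}, so I focus on the first. Setting $R := L_{G_+\smsh_H E}(S^0)$ and $Y := L_{G/H_+}(R \smsh X)$, there is a canonical composite $X \to R \smsh X \to Y$, and the plan is to show that it exhibits $Y$ as the $G_+\smsh_H E$-localization of $X$---that is, to verify (a) $Y$ is $G_+\smsh_H E$-local and (b) the composite is a $G_+\smsh_H E$-equivalence. The key tool for (a) is \hyperref[3.9]{Corollary 3.9}, which (using normality of $H$) reduces locality to the pair of conditions that $Y$ be $H$-cofree and that $i^G_H Y$ be $\bigvee_{[g]\in G/H} {}^g E$-local.

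For (a): that $Y$ is $H$-cofree is automatic from \hyperref[3.5]{3.5}. For the restricted locality, I would exploit that $H \in \mc F_H$ forces $i^G_H(E{\mc F_H}_+) \simeq S^0$, giving $i^G_H Y \simeq i^G_H R \smsh i^G_H X$. By \hyperref[3.2]{Proposition 3.2}(2) together with the double coset formula, $i^G_H R \simeq L_{\bigvee {}^g E}(S^0)$, which is tautologically $\bigvee {}^g E$-local. To promote this to locality of the smash product $i^G_H R \smsh i^G_H X$, I invoke that $\bigvee {}^g E$ is smashing: because $H$ is normal, conjugation by any $g \in G$ is an automorphism of $H$, so \hyperref[3.12]{Proposition 3.12}(4) gives that each ${}^g E$ is smashing, and \hyperref[2.12]{Corollary 2.12} passes this to the wedge. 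Then \hyperref[2.7]{Proposition 2.7}(4) says $\bigvee {}^g E$-locals form a smash ideal, completing (a).

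For (b): I factor $X \to Y$ through $R \smsh X$. The first map $X \to R \smsh X$ has fiber $Z_{G_+\smsh_H E}(S^0) \smsh X$, which is $G_+\smsh_H E$-acyclic since acyclics always form a smash ideal, so this map is a $G_+\smsh_H E$-equivalence. The second map $R \smsh X \to Y$ is a $G/H_+$-equivalence by construction. Using the Frobenius/projection identity $G_+\smsh_H E \smsh (-) \simeq G_+\smsh_H(E \smsh i^G_H(-))$, one computes $\mc Z_{G_+\smsh_H E} = \{Z : i^G_H Z \in \mc Z_E\}$ and $\mc Z_{G/H_+} = \{Z : i^G_H Z \simeq *\}$, so $\mc Z_{G/H_+} \subset \mc Z_{G_+\smsh_H E}$; hence every $G/H_+$-equivalence is a $G_+\smsh_H E$-equivalence.

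The main obstacle is step (a), specifically establishing the smashing property of $\bigvee {}^g E$: this is the mechanism that transfers locality of $i^G_H R$ to locality of $i^G_H R \smsh i^G_H X$, and it is exactly where the normality hypothesis enters essentially. Without normality, the restriction $i^G_H(G_+\smsh_H E)$ fails to split as a wedge of Weyl conjugates and the above smash-ideal argument breaks down.
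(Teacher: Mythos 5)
Your proof is correct and follows essentially the same route as the paper's (much terser) argument: verify that the target is $G_+\smsh_H E$-local via \hyperref[3.7]{3.7}/\hyperref[3.9]{3.9}, and that the map is a $G_+\smsh_H E$-equivalence, with the key input in both steps being that $i^G_H(G_+\smsh_H E)\simeq\bigvee_{[g]\in G/H}{}^gE$ is smashing (the point of \hyperref[3.19]{3.19}, where normality enters). Your write-up just makes explicit the details the paper leaves to the reader.
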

\begin{proof}
The map
\[X\to F(E{\mc{F}_H}_+,L_{G_+\smsh_HE}(S^0)\smsh X)\]
is a $G_+\smsh_HE$ equivalence since $i^G_H(G_+\smsh_HE)$ is smashing, and the target is easily seen to be $G_+\smsh_HE$-local from \hyperref[3.7]{3.7}.
\end{proof}

\begin{proposition}\label{3.21}
Let $E\in Sp$ be any spectrum, and $X\in Sp^G$, then 
\[L_{G_+\smsh E}(X)\simeq F(EG_+,L_{i_*E}X)\simeq F(EG_+,i_*L_E(S^0)\smsh X)\]
\end{proposition}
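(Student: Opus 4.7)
The plan is to verify that both right-hand sides satisfy the universal property of $L_{G_+\smsh E}(X)$: each is $G_+\smsh E$-local, and receives a $G_+\smsh E$-equivalence from $X$. Since $\{e\}\subset G$ is trivially normal and $E\mc F_{\{e\}}=EG$, this is a special case of the setup of Proposition \hyperref[3.20]{3.20}, and the argument will proceed in parallel. The central tool is Proposition \hyperref[3.7]{3.7} specialized to $H=\{e\}$: a $G$-spectrum $Y$ is $G_+\smsh E$-local iff $Y$ is cofree and $i^G_{\{e\}}Y$ is $i^G_{\{e\}}(G_+\smsh E)$-local; since $i^G_{\{e\}}(G_+\smsh E)\simeq\bigvee_{g\in G}E$ has the same Bousfield class as $E$ nonequivariantly, this reduces to plain $E$-locality of the underlying spectrum.

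For the locality step, both candidates are cofree by construction. Using $i^G_{\{e\}}(EG_+)\simeq S^0$ (as $EG$ is nonequivariantly contractible), compatibility of the internal hom with restriction, and Proposition \hyperref[3.2]{3.2}(2), I compute
\[i^G_{\{e\}}F(EG_+,L_{i_*E}X)\simeq L_E(i^G_{\{e\}}X),\qquad i^G_{\{e\}}F(EG_+,i_*L_E(S^0)\smsh X)\simeq L_E(S^0)\smsh i^G_{\{e\}}X.\]
The first is plainly $E$-local. For the second, the smashing hypothesis on $E$ identifies it with $L_E(i^G_{\{e\}}X)$, which is again $E$-local; this same identification shows the two cofree candidates have equivalent underlying spectra and hence, by Corollary \hyperref[3.6]{3.6}, are equivalent $G$-spectra, giving the second of the two claimed equivalences.

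For the universal map, I would factor $X\to F(EG_+,L_{i_*E}X)$ through $X\to L_{i_*E}X\to F(EG_+,L_{i_*E}X)$ and verify that each factor is a $G_+\smsh E$-equivalence. The first is an $i_*E$-equivalence, and $\mc Z_{i_*E}\subset\mc Z_{G_+\smsh E}$ since the former requires $\Phi^HZ\in\mc Z_E$ for every $H\subset G$ by Proposition \hyperref[2.2]{2.2} while the latter only requires $i^G_{\{e\}}Z\in\mc Z_E$, so a fortiori this is a $G_+\smsh E$-equivalence. The second is the $G_+$-localization map by Lemma \hyperref[3.5]{3.5}, and $\mc Z_{G_+}\subset\mc Z_{G_+\smsh E}$ is immediate. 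An identical factorization handles the third form, with the first factor $X\to i_*L_E(S^0)\smsh X$ having cofiber whose restriction to $\{e\}$ is $\Sigma Z_E(S^0)\smsh i^G_{\{e\}}X\in\mc Z_E$. The only delicate point is verifying that $L_E(S^0)\smsh i^G_{\{e\}}X$ is $E$-local, which is exactly where the smashing of $E$ enters; this is the substantive obstacle in the otherwise straightforward verifications.
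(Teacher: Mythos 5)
Your argument is correct and takes essentially the same route as the paper: the paper's proof likewise checks that $X\to F(EG_+,L_{i_*E}X)$ is a $G_+\smsh E$-equivalence and that the target is $G_+\smsh E$-local via \hyperref[3.7]{3.7}; you have simply made the underlying-spectrum computations and the acyclic containments explicit. The one point to flag is that you invoke ``the smashing hypothesis on $E$'' to obtain the second equivalence $F(EG_+,L_{i_*E}X)\simeq F(EG_+,i_*L_E(S^0)\smsh X)$, whereas the proposition is stated for an arbitrary $E\in Sp$. You are right that smashing (or at least $L_E(i^G_{\{e\}}X)\simeq L_E(S^0)\smsh i^G_{\{e\}}X$) is genuinely needed at that step: both candidates are cofree with underlying spectra $L_E(i^G_{\{e\}}X)$ and $L_E(S^0)\smsh i^G_{\{e\}}X$ respectively, and these differ for non-smashing $E$ (for instance $E=H\F_p$ and $X=i_*H\Q$ make the first contractible but not the second). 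So the mismatch lies in the statement rather than in your proof: the paper's own one-line proof only establishes the first equivalence, and in every application ($E=E(n)$ in \hyperref[3.22]{3.22}, \hyperref[3.23]{3.23}, \hyperref[4.1]{4.1}) $E$ is smashing, or else $X=S^0$, where the two forms agree by \hyperref[3.2]{3.2}(1).
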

\begin{proof}
The map $X\to F(EG_+,L_{i_*E}X)$ becomes an equivalence after smashing with ${G}_+\smsh E$, and the target is $G_+\smsh E$-local by \hyperref[3.7]{3.7}.
\end{proof}
\begin{corollary}\label{3.22}
Let $E\in Sp$ be a smashing spectrum, then $G_+\smsh E$ is smashing if and only if $(L_E(S^0))^{tH}\simeq *$ for all nontrivial subgroups $H\subset G$.
\end{corollary}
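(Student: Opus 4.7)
The plan is to invoke \hyperref[3.18]{3.18} with $H=\{e\}$, so that $\mc F_H=\{\{e\}\}$ and the condition ``$K\notin\mc F_H$'' becomes ``$K$ nontrivial.'' Hence $G_+\smsh E$ is smashing if and only if (a) $i^G_{\{e\}}(G_+\smsh E)$ is smashing in $Sp$, and (b) $\Phi^K L_{G_+\smsh E}(S^0)\simeq *$ for every nontrivial $K\subset G$. Condition (a) is immediate: $i^G_{\{e\}}(G_+\smsh E)\simeq \bigvee_{g\in G}E$ is a finite wedge of smashing spectra, hence smashing by \hyperref[2.12]{2.12}.

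For (b), I would apply \hyperref[3.21]{3.21} to identify
\[L_{G_+\smsh E}(S^0)\simeq F(EG_+, i_*L_E(S^0)).\]
Fix a nontrivial $K\subset G$. Since $\Phi^K$ factors through $i^G_K$ and the internal function spectrum commutes with restriction, with $i^G_K(EG_+)\simeq EK_+$, we have
\[\Phi^K F(EG_+, i_*L_E(S^0))\simeq \Phi^K F(EK_+, i_*L_E(S^0)),\]
where on the right $i_*$ denotes the trivial $K$-action. The final step is to invoke the standard identification of the geometric fixed points of a cofree $K$-spectrum with the Tate construction,
\[\Phi^K F(EK_+, i_*X)\simeq X^{tK}\]
for $X\in Sp$ and $K$ nontrivial, obtained from the isotropy separation cofiber sequence $E{\mc P}_+\to S^0\to\tilde{E}\mc P$ applied to $F(EK_+,i_*X)$ together with $\Phi^K(EK_+)\simeq *$ and the general fact $(\tilde{E}\mc P\smsh Y)^K\simeq \Phi^K Y$. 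Applied to $X=L_E(S^0)$, this shows that (b) is equivalent to $(L_E(S^0))^{tK}\simeq *$ for all nontrivial $K\subset G$, completing the proof.

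The only nonroutine ingredient is the identification $\Phi^K F(EK_+, i_*X)\simeq X^{tK}$; everything else is a direct combination of \hyperref[3.18]{3.18}, \hyperref[3.21]{3.21}, and \hyperref[2.12]{2.12}. One can either take this identification as the definition of Tate, or, if one prefers the cofiber-of-norm definition, derive it as above from the isotropy separation sequence, which is standard.
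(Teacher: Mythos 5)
Your reduction via \hyperref[3.18]{3.18} (equivalently \hyperref[3.19]{3.19} with $H=\{e\}$) and \hyperref[3.21]{3.21} matches the paper, but the final identification $\Phi^K F(EK_+,i_*X)\simeq X^{tK}$ for \emph{all} nontrivial $K$ is false, and this is where the gap lies. The Tate construction is $X^{tK}=(\tilde{E}K\smsh F(EK_+,i_*X))^K$, where $\tilde{E}K=\tilde{E}\mc F_{\{e\}}$, whereas $\Phi^K Y=(\tilde{E}\mc P\smsh Y)^K$ with $\mc P$ the family of \emph{all proper} subgroups of $K$. These universal spaces coincide only when $K$ has no proper nontrivial subgroups, i.e.\ when $K$ has prime order. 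Your isotropy-separation derivation silently conflates $E\mc P$ with $EK$: the norm cofiber sequence $X_{hK}\to X^{hK}\to X^{tK}$ comes from smashing $F(EK_+,i_*X)$ with $EK_+\to S^0\to\tilde{E}K$, not with $E\mc P_+\to S^0\to\tilde{E}\mc P$. For $K$ not of prime order (already for $K=C_4\subset G$), $\Phi^K F(EK_+,i_*X)$ is a ``proper Tate construction'' that in general differs from $X^{tK}$, so as written your argument establishes the corollary only when every nontrivial subgroup of $G$ has prime order.

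The paper closes this gap asymmetrically. For the ``if'' direction it uses only that the inclusion of families $\mc F_{\{e\}}\subset\mc P$ induces a ring map $(L_E(S^0))^{tK}=(\tilde{E}K\smsh F(EK_+,i_*L_E(S^0)))^K\to(\tilde{E}\mc P\smsh F(EK_+,i_*L_E(S^0)))^K=\Phi^K F(EK_+,i_*L_E(S^0))$, exhibiting the geometric fixed points as a module over the Tate ring; vanishing of the Tate construction then forces vanishing of the geometric fixed points, and \hyperref[3.19]{3.19} gives smashing. For the ``only if'' direction it notes that $\tilde{E}G\in\mc Z_{G_+\smsh E}$, so smashingness gives $\tilde{E}G\smsh F(EG_+,i_*L_E(S^0))\simeq *$ as a $G$-spectrum, and taking $H$-fixed points of this contractible spectrum yields $(L_E(S^0))^{tH}\simeq *$ directly, with no appeal to geometric fixed points. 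Replacing your two-sided identification with these two one-directional arguments repairs the proof.
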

\begin{proof}
$G_+\smsh E$ is smashing if $\Phi^H(L_{G_+\smsh E}(S^0))\simeq *$ for all nontrivial subgroups $H$, but 
\aln{
\Phi^H(L_{G_+\smsh E}(S^0))&\simeq\Phi^H(F(EG_+,i_*L_E(S^0)))\\
&\simeq\Phi^H(F(EH_+,i_*L_E(S^0)))
}
which is a module over the ring $(L_E(S^0))^{tH}$. Conversely, if $G_+\smsh E$ is smashing, then $\mc Z_{G_+\smsh E}=\mc Z_{F(EG_+,i_*L_E(S^0))}$, but $\tilde{E}G\smsh G_+\smsh E\simeq*$, and
\[(L_E(S^0))^{tH}\simeq (\tilde{E}G\smsh F(EG_+,i_*L_E(S^0)))^H\]
\end{proof}
\begin{corollary}\label{3.23}
Let $E=E(n)$ at the prime $p$, then for all $G$ such that $p$ divides $|G|$, $G_+\smsh E$ is smashing if and only if $n=0$.
\end{corollary}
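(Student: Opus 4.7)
The plan is to apply Corollary~\hyperref[3.22]{3.22}, which reduces the statement to the following: $G_+ \smsh E(n)$ is smashing if and only if $(L_n S^0)^{tH} \simeq *$ for every nontrivial subgroup $H \subset G$ (writing $L_n := L_{E(n)}$).

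For $n = 0$: we have $L_0 S^0 \simeq H\Q$, and for any finite group $H$ the norm $(H\Q)_{hH} \to (H\Q)^{hH}$ is multiplication by $|H|$ on $\pi_0 \cong \Q$, hence an equivalence. Thus $(H\Q)^{tH} \simeq *$ for all such $H$, and \hyperref[3.22]{3.22} immediately gives that $G_+ \smsh E(0)$ is smashing.

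For $n \geq 1$: since $p \mid |G|$, Cauchy's theorem produces a subgroup $H \cong C_p \subset G$, so by \hyperref[3.22]{3.22} it suffices to prove $(L_n S^0)^{tC_p} \not\simeq *$. I would exploit the unit map $L_n S^0 \to E(n)$, which is a ring map since $E(n)$ is $E(n)$-local and $L_n$ is a lax symmetric monoidal localization. Applying the (lax monoidal) Tate construction yields a ring map $(L_n S^0)^{tC_p} \to E(n)^{tC_p}$. A ring spectrum $R$ is contractible precisely when $1 = 0$ in $\pi_0 R$, so if the source of this ring map were contractible, the unit in $\pi_0 E(n)^{tC_p}$ would have to vanish, forcing $E(n)^{tC_p} \simeq *$ as well. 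The remaining task is to rule this out.

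The main obstacle is establishing $E(n)^{tC_p} \not\simeq *$ for $n \geq 1$. Here I would appeal to the chromatic blueshift results for the Tate cohomology of Landweber-exact periodic theories (Hovey-Sadofsky, Ando-Morava-Sadofsky): the Bousfield class $\ip{E(n)^{tC_p}}$ is identified with $\ip{E(n-1)}$, which is manifestly nonzero for $n \geq 1$. This supplies the required contradiction and completes the proof.
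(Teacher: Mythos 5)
Your proof is correct and follows essentially the same route as the paper: both reduce via \hyperref[3.22]{3.22} to the (non)vanishing of $(L_nS^0)^{tH}$, dispose of $n=0$ by rational Tate vanishing, and for $n>0$ combine Cauchy's theorem with the Hovey--Sadofsky blueshift results. The only (harmless) difference is that the paper cites the non-contractibility of $(L_n(S^0))^{tC_p}$ directly from \cite{Hovey}, whereas you deduce it from the non-vanishing of $E(n)^{tC_p}$ via the unit ring map.
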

\begin{proof}
When $n=0$, $E(0)=H\Q=L_0(S^0)$, and $H\Q^{tH}\simeq*$ for all $H$ nontrivial. If $n>0$, then $G$ has an element of order $p$ and hence if $G_+\smsh E$ were smashing, we would necessarily have $(L_n(S^0))^{tC_p}\simeq*$. However, we know from \cite{Hovey} that this Tate spectrum is not contractible.
\end{proof}
We end this section with an example illustrating the necessity of the normality conditions in \hyperref[3.19]{3.19} and \hyperref[3.20]{3.20}. It shows that if $E\in Sp^H$, then $E$ and $i^G_H(G_+\smsh_HE)$ are not always Bousfield equivalent, and $E$ being smashing does not always guarantee that $i^G_H(G_+\smsh_HE)$ is smashing.
\begin{proposition}\label{3.24}
Let $G=\Sigma_4$ and $H=D_8=\ip{(1234),(13)}\subset\Sigma_4$. Then $\tilde{E}\mc F_{\ip{(1234)}}$ is a smashing $D_8$-spectrum, but $i^{\Sigma_4}_{D_8}({\Sigma_4}_+\smsh_{D_8}\tilde{E}\mc F_{\ip{(1234)}})$ is not smashing.
\end{proposition}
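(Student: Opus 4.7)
The first claim is immediate: $E\mc F_{\langle(1234)\rangle+}\to S^0\to \tilde{E}\mc F_{\langle(1234)\rangle}$ is an idempotent cofiber sequence in $Sp^{D_8}$, so by \hyperref[2.11]{2.11} the Bousfield class $\ip{\tilde E\mc F_{\langle(1234)\rangle}}$ is smashing.

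For the second claim, write $X := \tilde E\mc F_{\langle(1234)\rangle}$ and $Y := i^{\Sigma_4}_{D_8}({\Sigma_4}_+\smsh_{D_8} X)$. My plan is to restrict $Y$ to the cyclic subgroup $C_4 := \langle(1234)\rangle$ and show that the resulting $C_4$-spectrum $V := i^{D_8}_{C_4}Y$ is not smashing; by the contrapositive of \hyperref[3.12]{3.12}(2) this forces $Y$ itself to be non-smashing. The double coset decomposition $\Sigma_4 = D_8 \sqcup D_8\cdot(12)\cdot D_8$, with $D_8 \cap {}^{(12)}D_8 = V_4 = \langle(13)(24),(14)(23)\rangle$, gives
\[Y \simeq X \vee \bigl(D_8{}_+ \smsh_{V_4} X_g\bigr),\qquad X_g := c_{(12)}^*(i^{D_8}_{V_4}X).\]
Every subgroup of $C_4$ lies in $\mc F_{\langle(1234)\rangle}$, so $i^{D_8}_{C_4}X \simeq *$. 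A second double coset computation for $C_4\backslash D_8/V_4$ (which has a single element since $C_4$ surjects onto $D_8/V_4$, with $C_4 \cap V_4 = \langle(13)(24)\rangle$) identifies the restriction of the other summand to $C_4$ with $C_4{}_+\smsh_{\langle(13)(24)\rangle} i^{V_4}_{\langle(13)(24)\rangle}X_g$. A direct check on geometric fixed points shows $\ip{i^{V_4}_{\langle(13)(24)\rangle}X_g} = \ip{\tilde E\langle(13)(24)\rangle}$, so computing $\Phi^K$ for $K\subset C_4$ gives $\Phi^{\{e\}}V\simeq *$, $\Phi^{\langle(13)(24)\rangle}V\simeq S^0\vee S^0$, and $\Phi^{C_4}V\simeq *$.

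Now suppose for contradiction that $V$ is smashing. By \hyperref[3.14]{3.14}(1) applied to the $V$-local $L_V(S^0)$, we must have $\Phi^{\{e\}}L_V(S^0)\simeq *$ and $\Phi^{C_4}L_V(S^0)\simeq *$; moreover the smashing equality $\ip V = \ip{L_V(S^0)}$ from \hyperref[2.7]{2.7}(6) forces $\ip{\Phi^{\langle(13)(24)\rangle}L_V(S^0)} = \ip{S^0}$, so $\Phi^{\langle(13)(24)\rangle}L_V(S^0)$ is non-null. Since $(L_V(S^0))^{\{e\}}\simeq *$, the homotopy fixed points $(L_V(S^0))^{hC_4}$ and the Tate construction $(L_V(S^0))^{tC_4}$ both vanish, so the Tate square for the $C_4$-action collapses and gives $(L_V(S^0))^{C_4}\simeq \Phi^{C_4}L_V(S^0)\simeq *$. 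In particular $\pi_0^{C_4}L_V(S^0) = 0$, so the localization unit $S^0\to L_V(S^0)$ is null. But then $\mathrm{fib}(S^0\to L_V(S^0))\simeq S^0\vee \Sigma^{-1}L_V(S^0)$ has nontrivial $\Phi^{\langle(13)(24)\rangle}$ (containing a copy of $S^0$), contradicting its $V$-acyclicity, which the localization cofiber sequence demands since $\Phi^{\langle(13)(24)\rangle}V\not\simeq *$. Thus $V$ fails to be smashing, and so does $Y$.

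The main obstacle I anticipate is organizing the nested double coset computations cleanly together with the conjugation twist by $(12)$; once the fixed-point pattern of $V$ has been pinned down, the Tate-square collapse and the fiber argument are quick.
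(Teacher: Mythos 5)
Your reductions are exactly the paper's: restrict to $D_8$, use the double coset decomposition to split off $\tilde E\mc F_{\langle(1234)\rangle}\vee(D_{8+}\smsh_{V_4}\tilde E\mc F_{\langle(14)(23)\rangle})$, restrict further to $C_4=\langle(1234)\rangle$, and reduce to showing that $V={C_4}_+\smsh_{C_2}\tilde EC_2$ (with $C_2=\langle(13)(24)\rangle$) is not smashing; your computation of $\Phi^KV$ is also correct. The gap is in the final contradiction. The ``Tate square'' you invoke — the pullback relating $X^{G}$, $X^{hG}$, $X^{tG}$ and geometric fixed points — identifies $X^{C_4}$ with $(\tilde EC_4\smsh X)^{C_4}$ when $X^{hC_4}\simeq X^{tC_4}\simeq *$, \emph{not} with $\Phi^{C_4}X=(\tilde E\mc P\smsh X)^{C_4}$, where $\mc P=\mc F_{C_2}$ is the family of proper subgroups. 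The two corners agree only for groups of prime order; for $C_4$ they differ by the contribution of the intermediate subgroup $\langle(13)(24)\rangle$, which is precisely the only place where $V$ and $L_V(S^0)$ are nontrivial. Concretely, $X=E\mc P_+\smsh\tilde EC_4$ satisfies $\Phi^{\{e\}}X\simeq\Phi^{C_4}X\simeq *$ (hence $X^{hC_4}\simeq X^{tC_4}\simeq *$) and yet $X^{C_4}\simeq \Sigma^\infty_+ BC_2\not\simeq *$. So you cannot conclude $(L_V(S^0))^{C_4}\simeq *$, and in fact $\pi_0^{C_4}L_V(S^0)\neq 0$ unconditionally: for any $E\not\simeq *$ the unit $S^0\to L_E(S^0)$ is nonzero, since otherwise $S^0$ would be a retract of $Z_E(S^0)$ and hence $E$-acyclic — your own closing step is this observation run in reverse, so the entire contradiction rests on the false identification.

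To close the argument you need the actual value of $\Phi^{C_4}(L_V(S^0))$ rather than a formal vanishing statement. By \hyperref[3.7]{3.7} the $V$-locals are $\mc F_{C_2}$-cofree, and one identifies $L_V(S^0)\simeq F(E\mc P_+,\tilde EC_4)$; the $C_4$-geometric fixed points of this completion is the Tate construction $(S^0)^{t(C_4/C_2)}$, which is not contractible. This contradicts $\Phi^{C_4}(L_V(S^0))\simeq L_{\Phi^{C_4}V}(S^0)\simeq *$, which you did derive correctly from the smashing hypothesis via \hyperref[3.12]{3.12}(3) — equivalently, one applies the criterion of \hyperref[3.19]{3.19} directly. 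This is how the paper concludes, and some such Tate computation is unavoidable here: the geometric-fixed-point data you extracted for $L_V(S^0)$ under the smashing hypothesis is internally consistent (it is realized by $E\mc P_+\smsh\tilde EC_4$), so no purely formal argument on geometric fixed points can produce the contradiction.
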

\begin{proof}
We have 
\[D_8=\{e,(13)(24),(12)(34),(14)(23),(1234),(1432),(13),(24)\}\subset \Sigma_4\]
\[D_8\backslash\Sigma_4/D_8=\{D_8,D_8(12)D_8\}\]
\[^{(12)}D_8=\{e,(13)(24),(12)(34),(14)(23),(1342),(1243),(14),(23)\}\]
\[D_8\cap {^{(12)}D_8}=\{e,(13)(24),(12)(34),(14)(23)\}=V_4\]
Therefore we have
\[i^{\Sigma_4}_{D_8}{\Sigma_4}_+\smsh_{D_8}\tilde{E}\mc F_{\ip{(1234)}}=\tilde{E}\mc F_{\ip{(1234)}}\vee\bg({D_8}_+\smsh_{V_4}i^{{^{(12)}D_8}}_{V_4}\bg({^{(12)}\tilde{E}\mc F_{\ip{(1234)}}}\bg)\bg)\]
$^{(12)}\tilde{E}\mc F_{\ip{(1234)}}$ is the universal $^{(12)}D_8$ space $\tilde{E}\mc F_{\ip{(1342)}}$, and hence $i^{{^{(12)}D_8}}_{V_4}\bg({^{(12)}\tilde{E}\mc F_{\ip{(1234)}}}\bg)$ is the universal $V_4$-space $\tilde{E}\mc F_{\ip{(14)(23)}}$. Therefore we may write
\[i^{\Sigma_4}_{D_8}{\Sigma_4}_+\smsh_{D_8}\tilde{E}\mc F_{\ip{(1234)}}=\tilde{E}\mc F_{\ip{(1234)}}\vee\bg({D_8}_+\smsh_{V_4}\tilde{E}\mc F_{\ip{(14)(23)}}\bg)\]
We now assume for the sake of contradiction that this $D_8$-spectrum is smashing, hence we restrict to $\ip{(1234)}\cong C_4$ to get a smashing $C_4$-spectrum
\[i^{D_8}_{\ip{(1234)}}\bg(\tilde{E}\mc F_{\ip{(1234)}}\vee\bg({D_8}_+\smsh_{V_4}\tilde{E}\mc F_{\ip{(14)(23)}}\bg)\bg)\simeq i^{D_8}_{\ip{(1234)}}\bg({D_8}_+\smsh_{V_4}\tilde{E}\mc F_{\ip{(14)(23)}}\bg)\]
One checks that
\[\ip{(1234)}\backslash D_8/V_4=\{\ip{(1234)}eV_4\}\]
\[\ip{(1234)}\cap V_4=\ip{(13)(24)}\]
so that 
\aln{
i^{D_8}_{\ip{(1234)}}\bg({D_8}_+\smsh_{V_4}\tilde{E}\mc F_{\ip{(14)(23)}}\bg)&\simeq\ip{(1234)}_+\smsh_{V_4\cap \ip{(1234)}}i^{V_4}_{V_4\cap \ip{(1234)}}\tilde{E}\mc F_{\ip{(14)(23)}}\\
&\simeq\ip{(1234)}_+\smsh_{\ip{(13)(24)}}\tilde{E}\mc F_{\ip{(14)(23)}\cap\ip{(13)(24)}}\\
&\simeq {C_4}_+\smsh_{C_2}\tilde{E}C_2
}
Now $\tilde{E}C_2$ is a smashing $C_2$-spectrum, and so by \hyperref[3.19]{3.19}, ${C_4}_+\smsh_{C_2}\tilde{E}C_2$ is a smashing $C_4$-spectrum if and only if 
\[\Phi^{C_4}(L_{{C_4}_+\smsh_{C_2}\tilde{E}C_2}(S^0))\simeq *\]
and the proof of \hyperref[3.20]{3.20} shows that $L_{{C_4}_+\smsh_{C_2}\tilde{E}C_2}(S^0)=F(E{\mc P}_+,\tilde{E}C_4)$, but
\[\Phi^{C_4}(F(E{\mc P}_+,\tilde{E}C_4))\simeq (S^0)^{tC_2}\not\simeq *\]
\end{proof}

\section{Consequences for Chromatic Localizations}\label{sec4}
\subsection{Proofs of Main Theorems}\label{sec4.1}
In this section, we prove Theorems \hyperref[1.1]{1.1} and \hyperref[1.2]{1.2}, namely that the analogs of the smash product theorem and the chromatic convergence theorem for the $E_\R(n)$'s hold only after Borel completion. We remark on  analogs of the nilpotence and thick subcategory theorems. We also discuss recent $C_{2^n}$-equivariant analogs of the $E_\R(n)$'s constructed in \cite{BHSZ}, and we identify their Bousfield classes and deduce Theorem \hyperref[1.3]{1.3}.
\begin{theorem}\label{4.1}
If $n>0$, then $E_\R(n)$ is not smashing. Moreover, for $X\in Sp^{C_2}$,
\[L_{E_\R(n)}(X)\simeq F(E{C_{2}}_+,i_*L_{E(n)}(S^0)\smsh X)\]
\end{theorem}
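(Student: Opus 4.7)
The plan is to reduce the theorem to a direct application of earlier results, using the Bousfield class identification from Example 2.4. The key point is that although $E_\R(n)$ has its own equivariant structure, its Bousfield class agrees with that of an induced spectrum, and induced spectra are precisely the subject of Section 3.3.

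First, by Example 2.4, we have $\ip{E_\R(n)}=\ip{{C_2}_+\smsh E(n)}$, so the Bousfield localization functors $L_{E_\R(n)}$ and $L_{{C_2}_+\smsh E(n)}$ agree as endofunctors of $Sp^{C_2}$. This reduces both assertions of the theorem to questions about the induced localization $L_{{C_2}_+\smsh E(n)}$.

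Next, I would apply Proposition 3.21 with $G=C_2$ and the (nonequivariant) smashing spectrum $E=E(n)$. This immediately yields
\[L_{E_\R(n)}(X)\simeq L_{{C_2}_+\smsh E(n)}(X)\simeq F(E{C_2}_+,i_*L_{E(n)}(S^0)\smsh X),\]
which is exactly the claimed formula. For the assertion that $E_\R(n)$ is not smashing when $n>0$, I would invoke Corollary 3.23 with $G=C_2$ and $p=2$: since $2$ divides $|C_2|$, that corollary says ${C_2}_+\smsh E(n)$ is smashing if and only if $n=0$. Since smashingness depends only on the Bousfield class (Corollary 2.11), $E_\R(n)$ is not smashing for $n>0$.

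There isn't really a main obstacle here: the work has already been done in Example 2.4, Proposition 3.21, and Corollary 3.23, and Theorem 4.1 is essentially the specialization of that machinery to the case $(G,E)=(C_2,E(n))$. The only thing to keep in mind is that Corollary 3.23 ultimately relies on Hovey's nonvanishing result for the Tate spectrum $(L_n(S^0))^{tC_2}$, which supplies the obstruction to smashingness.
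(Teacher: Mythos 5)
Your proposal is correct and follows exactly the paper's own argument: identify $\ip{E_\R(n)}=\ip{{C_2}_+\smsh E(n)}$ via Example 2.4, then apply Proposition 3.21 for the localization formula and Corollary 3.23 (resting on Hovey--Sadofsky's nonvanishing of $(L_n S^0)^{tC_2}$) for the failure of smashing. The only additional content in the paper's proof is a justification that $\Phi^{C_2}(E_\R(n))\simeq *$ (via the vanishing of $\Phi^{C_2}(\ov{v_n})$), which underlies the Bousfield class identification you quote.
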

\begin{proof}
By \hyperref[2.4]{2.4}, we have
\[\ip{E_\R(n)}=\ip{{C_{2}}_+\smsh E(n)}\]
and now the result follows from \hyperref[3.21]{3.21} and \hyperref[3.23]{3.23}. For the claim about $\Phi^{C_2}(E_\R(n))$, $E_\R(n)$ is a module over ${MU_\R}[\ov{v_n}^{-1}]$, and $\Phi^{C_2}({MU_\R}[\ov{v_n}^{-1}])\simeq*$ as $\Phi^{C_2}(\ov{v_n})=0$ (\cite{HHR}, 5.50).
\end{proof}

\begin{theorem}\label{4.2}
If $X$ is a $2$-local finite $C_{2}$-spectrum, we have a diagram
\[
\btz
X\arrow[r]\arrow[dr]&F(E{C_{2}}_+,X)\arrow[d,"\simeq"]\\
&\varprojlim_n L_{E_\R(n)}(X)
\etz
\]
\end{theorem}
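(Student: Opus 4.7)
The plan is to exploit the explicit formula of \hyperref[4.1]{4.1}, which identifies each $L_{E_\R(n)}(X)$ with a cofree $C_2$-spectrum, and then reduce the assertion to the classical (nonequivariant) chromatic convergence theorem applied to the underlying spectrum $i^{C_2}_e X$. For the factorization, note that by \hyperref[4.1]{4.1} each $L_{E_\R(n)}(X)\simeq F(E{C_2}_+, i_*L_{E(n)}(S^0)\smsh X)$ is cofree, i.e., lies in $\mc L_{{C_2}_+}$ by \hyperref[3.5]{3.5}. Since any category of local objects is closed under limits, $\varprojlim_n L_{E_\R(n)}(X)$ is itself cofree, so the canonical map from $X$ factors uniquely through the Borel completion $X\to F(E{C_2}_+, X)$, yielding the asserted diagram.

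Next, to verify that the induced map $F(E{C_2}_+, X)\to \varprojlim_n L_{E_\R(n)}(X)$ is an equivalence, observe that both source and target are cofree, so by \hyperref[3.6]{3.6} it suffices to check the map becomes an equivalence after applying $i^{C_2}_e$. The restriction functor is strong symmetric monoidal and has both adjoints, and hence commutes with limits, smash products, and internal function spectra; combined with $i^{C_2}_e(E{C_2}_+)\simeq S^0$ (since $EC_2$ has contractible underlying space), these compatibilities identify $i^{C_2}_e$ of the vertical arrow with the classical chromatic tower map
\[i^{C_2}_e X\longrightarrow \varprojlim_n L_{E(n)}(S^0)\smsh i^{C_2}_e X\simeq \varprojlim_n L_{E(n)}(i^{C_2}_e X).\]

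Since $X$ is a 2-local finite $C_2$-spectrum, $i^{C_2}_e X$ is a 2-local finite nonequivariant spectrum, and the classical chromatic convergence theorem \cite{Ravenel} supplies the required equivalence. The main nontrivial input is therefore Ravenel's classical convergence theorem, invoked as a black box; the equivariant content is purely the formal manipulation with Borel completion together with \hyperref[4.1]{4.1}, and no serious obstacle remains. The only point that requires a small amount of care is the naturality identification of the restricted map with the honest chromatic tower, but this is immediate once one notes that the vertical map of the diagram, viewed under $i^{C_2}_e$, is by construction the chromatic tower assembly for $i^{C_2}_e X$.
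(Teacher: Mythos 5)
Your argument is correct and follows essentially the same route as the paper: cofreeness of each $L_{E_\R(n)}(X)$ and closure of cofree spectra under homotopy limits give the factorization, \hyperref[3.6]{3.6} reduces the equivalence to the underlying map, and the classical chromatic convergence theorem finishes. The extra details you supply (compatibility of restriction with internal homs and the identification with the nonequivariant chromatic tower) are exactly the steps the paper leaves implicit.
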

\begin{proof}
The category of cofree $C_{2}$-spectra is closed under homotopy limits, hence there exists a unique up to homotopy vertical map making the above diagram commute. As a map between cofree $C_{2}$-spectra, it is an equivalence if and only if it induces an underlying equivalence. The underlying map is an equivalence by the nonequivariant chromatic convergence theorem (see \cite{Ravenel}).
\end{proof}

\begin{remark}\label{4.3}
The analogs of the nilpotence and thick subcategory theorems also fail in genuine $C_2$-spectra, and this is much easier to see. For the nilpotence theorem, the class $2-[C_2]\in\pi_0^{C_2}(S^0)\cong A(C_2)$ goes to $0$ in $\pi_0^{C_2}(MU_\R)=\Z$, but it is not nilpotent in $A(C_2)$. Passing to Borel $C_2$-spectra does not correct this: the endomorphism ring of the unit in Borel $C_2$-spectra is $A(C_2)^{\hat{}}_I$, by Lin's theorem \cite{Lin}, and $2-[C_2]$ is still not nilpotent.

In the case of the thick subcategory theorem, the Balmer spectrum of $(Sp^{C_2})^\omega$ was determined in \cite{BS}, and as remarked in the introduction, for $n>0$, none of the thick tensor ideals is the collection of finite acyclics of $E_\R(n)$, so no reasonable analog of the thick subcategory theorem for the $E_\R(n)$'s (or the $K_\R(n)$'s) can hold in $Sp^{C_2}$. Passing to Borel, we run into the following issue: the unit is not compact in Borel $C_2$-spectra, and in particular the compact objects and dualizable objects do not coincide. This makes an analysis of the spectrum more difficult, but is a subject we plan to revisit in future work. 
\end{remark}

In \cite{BHSZ}, Beaudry, Hill, Shi, and Zeng construct genuine $C_{2^n}$-spectra that serve as analogs to the $E_\R(n)$'s. We recall their construction, which hinges on the observation that one may construct the $C_{2^n}$-spectrum
\[BP^{((G))}\ip{m}:=N_{C_2}^{C_{2^n}}BP_\R/(C_{2^n}\cdot \ov{t_{m+1}},C_{2^n}\cdot \ov{t_{m+2}},\ldots)\]
and for a carefully chosen class $D\in\pi^{G}_{*\rho_{G}}BP^{((G))}$, $D^{-1}BP^{((G))}\ip{m}$ should have height $h=2^{n-1}m$.  More precisely, they show:
\begin{theorem}\label{4.4}
$($\cite{BHSZ}\textnormal{, Theorems 1.5 and 1.8}$)$ For $h=2^{n-1}m$, there is a class $D\in\pi_{*\rho_{G}}^{G}BP^{((G))}$ and a height $h$ formal group law $\G_h$ over $\F_2$ such that for any perfect field $k$ of characteristic 2, if we regard the corresponding Lubin-Tate theory $E(k,\G_h)$ as a cofree $C_{2^n}$-spectrum, there is a diagram in $Sp^{C_{2^n}}$
\[
\btz
BP^{((G))}\arrow[r]\arrow[d]&E(k,\G_h)\\
D^{-1}BP^{((G))}\arrow[ur,dashed]
\etz
\]
\end{theorem}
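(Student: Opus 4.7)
The plan is to construct the map $BP^{((G))}\to E(k,\G_h)$ via the cofree structure of the Lubin--Tate spectrum as a $C_{2^n}$-spectrum and then verify that $D$ is carried to a unit in the target. First I would analyze the formal group law underlying $BP^{((G))}$: by the double coset formula, $BP^{((G))}=N_{C_2}^{C_{2^n}}BP_\R$ has underlying spectrum $BP^{\smsh 2^{n-1}}$, which carries a canonical formal group law obtained as the iterated base change of the universal $2$-typical formal group law under the multiplication maps. The $C_{2^n}$-action permutes the tensor factors via the action on $C_{2^n}/C_2$, and the Weyl orbits $C_{2^n}\cdot\ov{t}_i$ track the Araki generators in each factor. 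Quotienting by these orbits for $i>m$ produces $BP^{((G))}\ip{m}$, whose underlying formal group law has height $h=2^{n-1}m$; I would then take $\G_h$ to be the mod $2$ reduction of this law.

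Second, I would build the map $BP^{((G))}\to E(k,\G_h)$ by combining the cofree structure of the target with the universal property of the norm. The $C_{2^n}$-action on the underlying Lubin--Tate spectrum $E(k,\G_h)$ comes from an embedding $C_{2^n}\hookrightarrow \mb{S}_h$ into the Morava stabilizer group, which factors through a subgroup whose restriction to $C_2\subset C_{2^n}$ is generated by complex conjugation. This yields a Real orientation $MU_\R\to i^{C_{2^n}}_{C_2}E(k,\G_h)$, and by the norm-restriction adjunction this refines to a $C_{2^n}$-equivariant map $MU^{((C_{2^n}))}\to E(k,\G_h)$. Passing to the $2$-typical summand and using that the Araki generators above degree $m$ act as zero on $E(k,\G_h)$ (whose formal group law has height exactly $h$) produces the desired factorization through $BP^{((G))}\ip{m}$.

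Finally, for the factorization through $D^{-1}BP^{((G))}\ip{m}$: the class $D\in\pi^G_{\ast\rho_G}BP^{((G))}$ is chosen as a norm-product of the Real Araki classes $\ov{v}_m$ so that its geometric fixed points $\Phi^H(D)$ realize the appropriate $v_h$-like generator in each layer of the isotropy filtration. Since $v_h$ is invertible in $E(k,\G_h)$, the image of $D$ under the map above is a unit, and the universal property of localization at a homotopy element supplies the dashed arrow. The main obstacle throughout is the formal-group-theoretic bookkeeping inherent in the norm: verifying that the underlying formal group law of $BP^{((G))}\ip{m}$ has height \emph{exactly} $2^{n-1}m$ (the correct height could a priori be smaller), and selecting $D$ so that every $\Phi^H(D)$ is simultaneously the right invertible generator, both require delicate and explicit manipulation of norms of Araki classes.
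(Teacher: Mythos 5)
This statement is not proved in the paper at all: it is quoted, with attribution, from \cite{BHSZ} (their Theorems 1.5 and 1.8), so there is no internal argument to compare yours against. Measured against the actual proof in \cite{BHSZ}, your outline does reproduce its architecture: one starts from the Hahn--Shi Real orientation of Lubin--Tate theories at $p=2$ \cite{Hahn}, norms it up to $C_{2^n}$ to get $MU^{((C_{2^n}))}\to E(k,\G_h)$, passes to $BP^{((G))}$, and then shows that $D$ maps to a unit so that the localization's universal property produces the dashed arrow.

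Two points where the sketch is not yet an argument. First, the ``norm--restriction adjunction'' you invoke is an adjunction between categories of genuine commutative rings, and a Real orientation $MU_\R\to i^{C_{2^n}}_{C_2}E(k,\G_h)$ is not a priori a commutative ring map; the mechanism actually used is to apply $N_{C_2}^{C_{2^n}}$ to this map and compose with the counit $N_{C_2}^{C_{2^n}}i^{C_{2^n}}_{C_2}E(k,\G_h)\to E(k,\G_h)$, which exists because the cofree $E_\infty$-ring $E(k,\G_h)$ is genuinely $G$-$E_\infty$ by \cite{BH} (this is \hyperref[5.1]{5.1} in the present paper). Second, and more seriously, the assertion that ``since $v_h$ is invertible in $E(k,\G_h)$, the image of $D$ is a unit'' is not a formal consequence of anything you have set up: $D$ is a specific product of norms of the $\ov{t}_i$ living in $RO(G)$-graded degree $*\rho_G$, and verifying that its image in $\pi^{C_{2^n}}_{*\rho_G}E(k,\G_h)$ is invertible is precisely the main computation of \cite{BHSZ}, carried out via explicit formulas for images of normed classes in the (Borel-complete) homotopy of $E(k,\G_h)$. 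As written, your third paragraph defers exactly the content of the theorem to ``delicate and explicit manipulation.'' A small additional discrepancy: the dashed arrow in the stated diagram emanates from $D^{-1}BP^{((G))}$, not from $D^{-1}BP^{((G))}\ip{m}$; the further factorization through the quotient, which is what requires the images of $G\cdot\ov{t}_i$ for $i>m$ to vanish, is recorded separately in the paper immediately after the theorem.
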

It follows that the above map factors further through 
\[E_G(m):=D^{-1}BP^{((G))}\ip{m}\]
which can be thought of as a $C_{2^n}$-equivariant height $h$ Johnson-Wilson theory. The corresponding localization functors on $Sp^{C_{2^n}}$ behave formally very similarly to those of the $E_\R(n)$'s, as we can identify their Bousfield classes in a similar way. We need the following results about the class $D$:

\begin{theorem}\label{4.5}
$($\cite{BHSZ}\textnormal{, Theorem 1.2}$)$ The element 
\[i^{G}_eD\in \pi_*^eBP^{((G))}\ip{m}\cong\Z_{(2)}[G\cdot\ov{t_1},\ldots,G\cdot\ov{t_m}]\]
satisfies the following properties:
\begin{itemize}
\item $v_h$ divides $D$,
\item $(2,v_1,\ldots,v_h)$ is a regular sequence in $D^{-1}\pi_*^eBP^{((G))}\ip{m}$,
\item $v_r\in I_r$ for $r>h$,
\item $D^{-1}\pi_*^e(BP^{((G))}\ip{m})/I_h\cong\F_2[(t_m^{G})^{\pm}]$ with $v_h=t_m^{(2^h-1)/(2^m-1)}$, and
\item the formal group law carried by $\pi_*^eBP^{((G))}\ip{m}$ has height exactly h over $D^{-1}\pi_*^e(BP^{((G))}\ip{m})/I_h$.
\end{itemize}
\end{theorem}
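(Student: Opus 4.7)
Since this statement is quoted directly from \cite{BHSZ}, Theorem 1.2, I will only outline the shape of the argument rather than reconstruct it. The plan is to pass to underlying homotopy, where $\pi_*^e BP^{((G))}$ becomes a polynomial ring on which $G$ permutes generators, and then exploit the explicit form of $D$ constructed in \cite{BHSZ} to verify each of the five bullets.

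First I would use that $i^G_e N_{C_2}^{C_{2^n}}BP_\R\simeq BP^{\smsh 2^{n-1}}$, so $\pi_*^e BP^{((G))}$ is a polynomial ring on the $G$-translates $g\cdot\ov{t_i}$ for $i\geq 1$. Killing the orbit ideals $(C_{2^n}\cdot\ov{t_{m+1}},C_{2^n}\cdot\ov{t_{m+2}},\ldots)$ yields the stated polynomial presentation on the $G$-orbits $G\cdot\ov{t_1},\ldots,G\cdot\ov{t_m}$. The class $D\in\pi^G_{*\rho_G}BP^{((G))}$ of \cite{BHSZ} is built as a product of Weyl conjugates of a specific monomial in $\ov{t_m}$, so that $i^G_eD$ is, up to a unit, a power of the norm $t_m^G=\prod_g g\cdot\ov{t_m}$; in particular inverting $D$ inverts $t_m^G$. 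The main content is then an identification of $v_h$ modulo $I_h$: using the Araki recursion at $p=2$, together with the relations coming from killing the higher orbit ideals, the reduction of $v_h$ in the $D$-inverted quotient by $I_h$ turns out to be $\ov{t_m}^{(2^h-1)/(2^m-1)}$ (using $h=2^{n-1}m$). This simultaneously gives bullets (1) and (4), and bullet (5) then follows since $v_h$ becomes a unit in the quotient.

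For bullet (2), regularity of $(2,v_1,\ldots,v_h)$ after $D$-localization follows by checking inductively that each $v_j$ is a non-zero divisor modulo $(2,v_1,\ldots,v_{j-1})$; inverting a multiplicatively closed set of non-zero divisors preserves regularity, and the delicate step is at $v_h$, where the identification as a monomial in the polynomial ring $\F_2[(t_m^G)^\pm]$ makes it manifestly a non-zero divisor. For bullet (3), the Araki expression for $v_r$ with $r>h$ necessarily involves some $\ov{t_i}$ with $i>m$ by degree, so every surviving contribution after killing those orbits lies in $I_r$. The main obstacle — and the reason this is the technical heart of \cite{BHSZ} — is pinning $D$ down precisely enough to make the identification in (4) rigorous while keeping track of the $C_{2^n}$-action on Araki generators and the slope filtration needed to isolate the relevant monomial. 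I would rely on their explicit construction of $D$ rather than reproduce those computations here.
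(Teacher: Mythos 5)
The paper offers no proof of this statement: it is quoted verbatim as Theorem 1.2 of \cite{BHSZ}, so deferring to that reference is exactly what the paper does, and your proposal is therefore consistent with the paper's treatment. Your accompanying sketch of how \cite{BHSZ} establish the five bullets is a reasonable gloss, but since the paper contains nothing to check it against, the only substantive requirement here is the correct attribution, which you have.
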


\begin{corollary}\label{4.6}
The underlying spectrum of $E_G(m)$ is Bousfield equivalent to $E(h)$, and the geometric fixed points of $E_G(m)$ at any nontrivial subgroup is contractible. In particular,
\[\ip{E_G(m)}=\ip{{C_{2^n}}_+\smsh E(h)}\]
\end{corollary}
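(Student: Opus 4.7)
The plan is to verify the two assertions separately and then combine them via Corollary \hyperref[2.3]{2.3}. The underlying Bousfield equivalence with $E(h)$ will come from a Landweber exactness argument fed by Theorem \hyperref[4.5]{4.5}, while the vanishing of the geometric fixed points at nontrivial subgroups proceeds in direct analogy with the proof of Theorem \hyperref[4.1]{4.1}.

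For the geometric fixed points, I would observe that $E_G(m) = D^{-1}BP^{((G))}\ip{m}$ is a module over the ring spectrum $BP^{((G))}[D^{-1}]$, so it suffices to show $\Phi^H(BP^{((G))}[D^{-1}]) \simeq *$ for every nontrivial $H \subseteq C_{2^n}$, and this in turn reduces to $\Phi^H(D) = 0$. Since $C_{2^n}$ is cyclic, every nontrivial $H$ contains the unique subgroup $C_2$, and the construction of $D$ in \cite{BHSZ} as a product involving norms of Real-oriented classes $\ov{v_j}$ (whose $\Phi^{C_2}$ vanishes, exactly as with $\ov{v_n}$ in Theorem \hyperref[4.1]{4.1}) forces $\Phi^H(D) = 0$ via the double-coset diagonal formula of Proposition \hyperref[3.11]{3.11}.

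For the underlying spectrum, the plan is to apply the Landweber exact functor theorem to $R_* := D^{-1}\pi_*^e BP^{((G))}\ip{m}$ viewed as a $BP_*$-algebra. Theorem \hyperref[4.5]{4.5} gives regularity of $(2, v_1, \ldots, v_h)$ on $R_*$, and since $v_h \mid D$ forces $v_h$ to be a unit, the quotient $R_*/(2, v_1, \ldots, v_h)$ vanishes, so the tail $(v_{h+1}, v_{h+2}, \ldots)$ is trivially regular on the zero module. Hence $(2, v_1, v_2, \ldots)$ is a regular sequence on $R_*$ and $R_*$ is Landweber exact. The height support of $R_*$ is exactly $\{0, 1, \ldots, h\}$: heights above $h$ are forbidden because $v_h$ is a unit; height $h$ is attained by the fifth bullet of Theorem \hyperref[4.5]{4.5}; and each intermediate height $r < h$ is attained because the regularity of $(2, v_1, \ldots, v_h)$ means $v_r$ is a nonzerodivisor on the nonzero ring $R_*/I_r$, producing a prime of $R_*$ containing $I_r$ but not $v_r$. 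The Landweber exact functor theorem, together with Ravenel's identification $\ip{E(h)} = \ip{K(0) \wdg \cdots \wdg K(h)}$, then yields $\ip{R} = \ip{E(h)}$.

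Finally, both $E_G(m)$ and ${C_{2^n}}_+ \smsh E(h)$ have contractible geometric fixed points at every nontrivial subgroup and Bousfield-equivalent underlying spectra, so Corollary \hyperref[2.3]{2.3} delivers the desired equivariant Bousfield equivalence. The main technical hurdle is the height-support computation: specifically, ensuring that every height from $0$ up to $h$ is attained by some prime of $R_*$, which is precisely where the regularity assertion in Theorem \hyperref[4.5]{4.5} does the essential work.
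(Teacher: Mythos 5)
Your proposal is correct and follows the same overall strategy as the paper: kill the geometric fixed points at nontrivial subgroups via the norm-divisibility of $D$ and \hyperref[3.11]{3.11} (exactly as the paper does, citing the proof of Theorem 1.8 in \cite{BHSZ} and HHR Section 10), identify the underlying spectrum's Bousfield class by Landweber exactness, and conclude with \hyperref[2.3]{2.3}. The one genuinely different sub-step is how you show the underlying theory sees every height from $0$ to $h$. The paper first passes to a $2$-periodic faithfully flat extension $E_*=R_*[u]/(u^{2^m-1}-t_m)$ and proves that $\Spec(E_0)\to\mc M_{FG}^{\le h}$ is a flat \emph{cover}, obtaining essential surjectivity from the factorization through the Lubin--Tate theory $E(k,\G_h)$ of Theorem \hyperref[4.4]{4.4}. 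You instead extract the full height support $\{0,\ldots,h\}$ purely algebraically from Theorem \hyperref[4.5]{4.5}: regularity of $(2,v_1,\ldots,v_h)$ plus $R_*/I_h\ne 0$ gives a prime realizing each height $r\le h$, and invertibility of $v_h$ (from $v_h\mid D$) excludes heights above $h$; you then invoke the standard fact that Bousfield classes of Landweber exact theories are determined by their height support. Your route avoids Theorem \hyperref[4.4]{4.4} entirely and is somewhat more self-contained on this point, at the cost of leaning explicitly on the Hovey--Strickland-type support theorem; the paper's route gets surjectivity for free from the Lubin--Tate comparison it has already established. Both are sound.
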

\begin{proof}
For the claim about geometric fixed points, \cite{BHSZ} (proof of Theorem 1.8) show that the class $D$ is divisible by norms of certain classes from $\pi_{*\rho_{C_2}}^{C_2}BP^{((G))}$, all of which become null upon applying $\Phi^{C_2}$, and the result follows as in (\cite{HHR}, Section 10). 

%
%
%
For the underlying spectrum, the conditions in \hyperref[4.5]{4.5} are enough to guarantee that the map
\[\Spec(D^{-1}\pi_*^eBP^{((G))}\ip{m})\to\mc M_{FG}\]
factors through a faithfully flat cover of the open substack $\mc M_{FG}^{\le h}$, and any such Landweber theory is Bousfield equivalent to $E(h)$.
In more detail, items (1) and (2) in \hyperref[4.5]{4.5} guarantee that the spectrum $i^{C_{2^n}}_e(D^{-1}BP^{((G))}\ip{m})$ is Landweber exact, and by functoriality it maps to the landweber exact spectrum $E$ with coefficient ring 
\[E_*:=(D^{-1}\pi_*^eBP^{((G))}\ip{m})[u]/(u^{2^m-1}-t_m)\]
with $|u|=2$, which is 2-periodic. $(D^{-1}\pi_*^eBP^{((G))}\ip{m})[u]/(u^{2^{m}-1}-t_m)$ is a free module over $D^{-1}\pi_*^eBP^{((G))}\ip{m}$, so the inclusion is faithfully flat, and the two Landweber theories are Bousfield equivalent. In $E_*$, we may use $u$ to conjugate the formal group into degree 0, and now a spectrum $X$ is $E$-acyclic if and only if the corresponding quasicoherent sheaves on $\mc M_{FG}$ determined by $E_0(X)$ and $E_1(X)$ are zero. It now suffices to show that the map 
\[\Spec(E_0)\to\mc M_{FG}^{\le h}\]
is a flat cover. 

This map is flat by the Landweber exact functor theorem, so it suffices to show that it is essentially surjective, and by \hyperref[4.4]{4.4}, there is a factorization
\[
\btz
\Spec(E(k,\G_h)_0)\arrow[r]\arrow[dr,"p"']&\Spec(E_0)\arrow[d]\\
&\mc M_{FG}^{\le h}
\etz
\]
and $p$ is a faithfully flat cover, as $\Spec(E(k,\G_h)_0)$ is a Lubin-Tate universal space of height $h$.
\end{proof}
The Bousfield classes of the $E_G(m)$'s are therefore nested: we see that 
\[\mc Z_{E_G(m)}\subset\mc Z_{E_G(m-1)}\]
and hence for any $X\in Sp^{C_2}$, we may form a chromatic tower
\[X\to\cdots\to L_{E_G(m)}(X)\to L_{E_G(m-1)}(X)\to\cdots \to L_{E_G(0)}(X)\]
Our results for $C_2$ now follow in essentially the same way for the $E_G(m)$'s:
\begin{theorem}\label{4.7}
Let $E_G(m)$ denote the $C_{2^n}$-spectrum $D^{-1}BP^{((G))}\ip{m}$ constructed in \cite{BHSZ}, where $h=2^{n-1}m$.
\begin{itemize}
\item If $m>0$, then $E_G(m)$ is not smashing. Moreover, for $X\in Sp^{C_{2^n}}$,
\[L_{E_G(m)}(X)\simeq F(E{C_{2^n}}_+,i_*L_{E(h)}(S^0)\smsh X)\]
\item If $X$ is a 2-local finite $C_{2^n}$-spectrum, we have a diagram
\[
\btz
X\arrow[r]\arrow[dr]&F(E{C_{2^n}}_+,X)\arrow[d,"\simeq"]\\
&\varprojlim_m L_{E_G(m)}(X)
\etz
\]
\end{itemize}
\end{theorem}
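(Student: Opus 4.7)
The plan is to leverage Corollary 4.6, which has already done the real work by identifying $\ip{E_G(m)}=\ip{{C_{2^n}}_+\smsh E(h)}$, and then to run the exact same argument that proved Theorems 4.1 and 4.2, but now with $G=C_{2^n}$ and nonequivariant $E=E(h)$ in place of $C_2$ and $E(n)$.

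For the first bullet, I would begin by invoking Corollary 4.6 to replace $L_{E_G(m)}$ by $L_{{C_{2^n}}_+\smsh E(h)}$. Then Proposition 3.21, applied to the smashing nonequivariant spectrum $E(h)$ and the group $C_{2^n}$, immediately gives the desired formula
\[L_{{C_{2^n}}_+\smsh E(h)}(X)\simeq F(E{C_{2^n}}_+,i_*L_{E(h)}(S^0)\smsh X).\]
To see that $E_G(m)$ fails to be smashing when $m>0$, I would observe that $h=2^{n-1}m>0$ in that case (as $n\ge 1$), and that $2$ divides $|C_{2^n}|$, so Corollary 3.23 applies directly and rules out smashing.

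For the second bullet, the strategy is identical to the proof of Theorem 4.2. The target $F(E{C_{2^n}}_+,X)$ is cofree by construction, and $\varprojlim_m L_{E_G(m)}(X)$ is also cofree because each $L_{E_G(m)}(X)$ is cofree (by the formula just established and Proposition 3.7, since an induced-type localization is always $\{e\}$-cofree) and cofree objects are closed under homotopy limits. This produces, and indeed forces, a unique vertical dashed map making the triangle commute. By Corollary 3.6, this vertical map is a genuine equivalence iff its underlying map in $Sp$ is an equivalence.

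The underlying map is precisely the nonequivariant comparison
\[i^{C_{2^n}}_e X\to \varprojlim_m i^{C_{2^n}}_e L_{E_G(m)}(X),\]
and by the underlying Bousfield equivalence of Corollary 4.6, $i^{C_{2^n}}_e L_{E_G(m)}(X)$ agrees with $L_{E(h)}(i^{C_{2^n}}_e X)$ for $h=2^{n-1}m$. Since $X$ is a $2$-local finite $C_{2^n}$-spectrum, $i^{C_{2^n}}_e X$ is a $2$-local finite spectrum, and as $m$ runs through $\N$ so does $h$, so the nonequivariant chromatic convergence theorem \cite{Ravenel} supplies the required equivalence. No step is especially delicate; the only point that warrants a line of care is confirming that the chromatic tower on the underlying spectrum is cofinal with the standard one, which is automatic since $h=2^{n-1}m$ is an unbounded function of $m$.
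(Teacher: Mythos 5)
Your proposal is correct and matches the paper's (implicit) proof exactly: the paper simply remarks that the results ``follow in essentially the same way'' as Theorems \hyperref[4.1]{4.1} and \hyperref[4.2]{4.2} once Corollary \hyperref[4.6]{4.6} identifies $\ip{E_G(m)}=\ip{{C_{2^n}}_+\smsh E(h)}$, which is precisely your argument via \hyperref[3.21]{3.21}, \hyperref[3.23]{3.23}, and cofreeness plus nonequivariant chromatic convergence. The only nitpick is that $h=2^{n-1}m$ runs through multiples of $2^{n-1}$ rather than all of $\N$, but you correctly resolve this yourself with the cofinality remark at the end.
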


\subsection{Smashing $C_{p^n}$-Spectra}\label{sec4.2} In light of \hyperref[4.1]{4.1}, a natural question from here is then if the $E_\R(n)$ are not smashing, can we construct equivariant spectra analogous to the $E(n)$ that \ti{are} smashing? More specifically, every thick tensor ideal in $Sp^\omega_{(p)}$ is the collection of finite acyclics of one of the $E(n)$'s, so we may ask if a similar statement is true for $(Sp^G)^\omega_{(p)}$, and we can give a construction when $G=C_{p^n}$. The following theorem was proven in the case $n=1$ by Balmer and Sanders \cite{BS}, and for $n>1$ by Barthel, Hausmann, Naumann, Nikolaus, Noel, and Stapleton \cite{Barthel}.

\begin{theorem}\label{4.8}\cite{BS}\cite{Barthel}
The thick tensor ideals in $(Sp_{(p)}^{C_{p^n}})^c$ are precisely the subcategories of the form
\[\{X\in (Sp_{(p)}^{C_{p^n}})^c: \Phi^{C_{p^i}}(X)\in\mc Z_{E(m_i)}\}\]
where $m_i\le m_{i+j}+1$ for all $0\le i\le n-1$ and $1\le j\le n-i$.
\end{theorem}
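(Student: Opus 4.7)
The plan is to compute the Balmer spectrum $\mathrm{Spc}((Sp_{(p)}^{C_{p^n}})^\omega)$ and then apply Balmer's classification of thick tensor ideals as Thomason subsets of this spectrum. This reduces the problem to a topological computation driven by the geometric fixed point functors.

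For the underlying set of the spectrum, I would use that each geometric fixed points functor $\Phi^{C_{p^i}}: (Sp_{(p)}^{C_{p^n}})^\omega \to Sp_{(p)}^\omega$ is exact and symmetric monoidal, hence induces a continuous map on Balmer spectra. Assembling these for $0 \le i \le n$ yields a map from $\coprod_{i=0}^n \mathrm{Spc}(Sp_{(p)}^\omega)$ to $\mathrm{Spc}((Sp_{(p)}^{C_{p^n}})^\omega)$, and joint conservativity of the $\Phi^{C_{p^i}}$ makes this surjective. Using the nonequivariant thick subcategory theorem, the source is a disjoint union of $n+1$ chromatic chains, so the primes in the target are parameterized by pairs $(i,m)$ with $0 \le i \le n$ and $0 \le m \le \infty$. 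Distinctness of these primes follows by separating them using Morava $K$-theory on the appropriate geometric fixed points.

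Next, I would pin down the topology. The key ingredient is the blueshift behavior of the Tate construction, namely that $(L_{E(m)} S^0)^{tC_p}$ is $E(m-1)$-local, due to Ando--Morava--Sadofsky and Hovey--Sadofsky. Through the isotropy separation square for a compact $C_{p^n}$-spectrum, this input translates into the specialization relation that the prime indexed by $(i+j,m)$ lies in the closure of the one indexed by $(i,m')$ exactly when $m' \le m + j$. Unwinding what it means to be a Thomason subset of such a spectrum yields that these subsets correspond precisely to tuples $(m_0,\ldots,m_n)$ satisfying the stated inequalities $m_i \le m_{i+j}+1$, with the corresponding thick ideal being $\{X : \Phi^{C_{p^i}}X \in \mc{Z}_{E(m_i)} \text{ for all } i\}$.

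The main obstacle is the blueshift computation, which is the technical heart of the proof and requires Tate vanishing results that go well beyond the formal machinery used elsewhere in the paper. The case $n=1$ is handled directly in \cite{BS} via explicit manipulation of Tate spectra; the general case in \cite{Barthel} reduces to this one via Galois descent through the chain $C_p \subset C_{p^2} \subset \cdots \subset C_{p^n}$, together with a careful analysis of the supports of compact objects under the various $\Phi^{C_{p^i}}$. Once the spectrum and its topology are in hand, the translation from Thomason subsets back to thick tensor ideals is routine and produces the stated classification.
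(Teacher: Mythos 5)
The paper does not prove this theorem; it is quoted verbatim from \cite{BS} and \cite{Barthel}, so there is no internal proof to compare against. Your outline is a faithful summary of the strategy actually used in those references: the underlying set of the Balmer spectrum is computed via the jointly conservative geometric fixed point functors together with the nonequivariant thick subcategory theorem, the topology is pinned down by the Tate blueshift input of Hovey--Sadofsky (and Ando--Morava--Sadofsky), and the classification of thick tensor ideals then falls out of Balmer's bijection with Thomason subsets. Two small caveats. First, surjectivity of the comparison map $\coprod_i \mathrm{Spc}(Sp^\omega_{(p)})\to\mathrm{Spc}((Sp^{C_{p^n}}_{(p)})^\omega)$ is not a formal consequence of joint conservativity alone; it rests on the Balmer--Sanders argument that every prime has the form $(\Phi^{C_{p^i}})^{-1}(\mc P)$. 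Second, the reduction in \cite{Barthel} from $C_{p^n}$ to $C_p$ is not by Galois descent but by induction using inflation and intermediate geometric fixed points, and it requires both an upper bound on the height shift (iterated blueshift) and a matching lower bound realized by explicit finite complexes; only with both halves do you get the precise closure relations, and hence exactly the tuples satisfying $m_i\le m_{i+j}+1$. As a citation-level justification this is entirely adequate and consistent with how the paper itself treats the result.
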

Therefore for $G=C_{p^n}$, the above question becomes: for a sequence of natural numbers $m_0,\ldots,m_n$ with $m_i\le m_{i+j}+1$ for all $0\le i\le n-1$ and $1\le j\le n-i$, can we build a \ti{smashing} $G$-spectrum $E(m_0,\ldots,m_n)$ so that $\Phi^{C_{p^i}}(E(m_0,\ldots,m_n))\simeq E(m_i)$? It is easy to build $E(m_0,\ldots,m_n)$ with the stated geometric fixed points since we may assume by induction that $E(m_1,\ldots,m_n)\in Sp^{C_{p^{n-1}}}$ exists, and then set
\[E(m_0,\ldots,m_n):=(E{C_{p^n}}_+\smsh i_*E(m_0))\vee(\tilde{E}C_{p^n}\smsh q^*E(m_1,\ldots,m_n))\]
where $q:C_{p^n}\to C_{p^{n-1}}$ is the quotient map. It is not obvious that this spectrum is smashing, but using the results of Section \hyperref[sec3]{3}, we can build a different representative of the same Bousfield class that is manifestly smashing.

We do not know if there is a way to construct such spectra that are not split as above. However, what follows would show that any such construction produces a smashing $G$-spectrum, since it would be Bousfield equivalent to the ones we construct. We begin with the case $n=1$. 

\begin{proposition}\label{4.9}
For every pair of natural numbers $m_0,m_1$ satisfying $m_0\le m_1+1$, there is a smashing $C_p$-spectrum $E(m_0,m_1)$ with the property that 
\[\ip{\Phi^{C_{p^i}}(E(m_0,m_1))}=\ip{E(m_i)}\]
for $i=0,1$.
\end{proposition}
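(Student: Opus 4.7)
The plan is to construct $E(m_0,m_1)$ as an explicit wedge or smash of already-smashing $C_p$-spectra and verify its geometric fixed points have the claimed Bousfield classes. The building blocks are $i_*E(m_i)$ (smashing by \hyperref[3.12]{3.12}(1)), the right idempotent $\tilde{E}C_p$ of the isotropy separation triangle $EC_{p+}\to S^0\to\tilde{E}C_p$ (smashing by \hyperref[2.10]{2.10}--\hyperref[2.11]{2.11}), and the smash products and localized spheres derived from these (smashing by \hyperref[2.12]{2.12}). Bousfield classes at geometric fixed points can be read off using $\Phi^e(\tilde{E}C_p)\simeq *$, $\Phi^{C_p}(\tilde{E}C_p)\simeq S^0$, and the identity $\Phi^H(L_F(S^0))\simeq L_{\Phi^H(F)}(S^0)$ for smashing $F$, which follows from \hyperref[3.12]{3.12}(3).

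I would split into two subcases of the constraint $m_0\le m_1+1$. When $m_0\le m_1$, set
\[E(m_0,m_1):=i_*E(m_0)\vee\bigl(\tilde{E}C_p\smsh i_*E(m_1)\bigr),\]
a wedge of smashing spectra, hence smashing by \hyperref[2.12]{2.12}. Its geometric fixed points are $\Phi^e\simeq E(m_0)$ and $\Phi^{C_p}\simeq E(m_0)\vee E(m_1)$, whose Bousfield class is $\ip{E(m_1)}$ since the chromatic nesting $\mc Z_{E(m_1)}\subset\mc Z_{E(m_0)}$ gives $\ip{E(m_0)}\le\ip{E(m_1)}$. When $m_0=m_1+1$, the same wedge would give the wrong (too large) $\Phi^{C_p}$-Bousfield class, and I would instead set
\[E(m_0,m_1):=i_*E(m_0)\smsh L_{\tilde{E}C_p\smsh i_*E(m_1)}(S^0),\]
which is smashing by \hyperref[2.12]{2.12}. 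Its second factor has $\Phi^e\simeq S^0$ and $\Phi^{C_p}\simeq L_{E(m_1)}(S^0)$, so $\Phi^e(E(m_0,m_1))\simeq E(m_0)$ and $\Phi^{C_p}(E(m_0,m_1))\simeq E(m_0)\smsh L_{E(m_1)}(S^0)$, of Bousfield class $\ip{E(m_0)}\wedge\ip{E(m_1)}=\ip{E(m_1)}$ by absorption, using $\ip{E(m_1)}\le\ip{E(m_0)}$.

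The subtle case is $m_0=m_1+1$, where one needs a smashing $C_p$-spectrum whose $\Phi^{C_p}$-Bousfield class is \emph{strictly smaller} than its $\Phi^e$-Bousfield class, ruling out naive wedge constructions. The idea is to start with $\tilde{E}C_p\smsh i_*E(m_1)$ (pinning $\Phi^{C_p}$ at height $m_1$), pass to its localized sphere (trivializing $\Phi^e$), and then smash with $i_*E(m_0)$ to install the correct $\Phi^e$-height; the $\Phi^{C_p}$-Bousfield class then collapses back to $\ip{E(m_1)}$ by absorption in the chromatic Bousfield lattice.
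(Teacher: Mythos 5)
Your first case ($m_0\le m_1$) is correct: $i_*E(m_0)\vee(\tilde{E}C_p\smsh i_*E(m_1))$ is a wedge of manifestly smashing spectra, and the Bousfield class computation at $\Phi^{C_p}$ via $\ip{E(m_0)\vee E(m_1)}=\ip{E(m_1)}$ is fine. (This is in fact the construction the paper uses later in Theorem \hyperref[4.10]{4.10}(ii); the paper's proof of \hyperref[4.9]{4.9} instead uses the single wedge $({C_p}_+\smsh E(m_0))\vee(\tilde{E}C_p\smsh i_*E(m_1))$ for all cases.)

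The case $m_0=m_1+1$ contains a fatal error. You claim the second factor $L_{\tilde{E}C_p\smsh i_*E(m_1)}(S^0)$ has $\Phi^e\simeq S^0$, but $\Phi^e=i^{C_p}_e$ and $i^{C_p}_e\tilde{E}C_p\simeq *$, so $\Phi^e(\tilde{E}C_p\smsh i_*E(m_1))\simeq *$ and hence, by \hyperref[3.12]{3.12}(3), $\Phi^e(L_{\tilde{E}C_p\smsh i_*E(m_1)}(S^0))\simeq L_*(S^0)\simeq *$. Indeed the paper records $L_{\tilde{E}C_p\smsh i_*E(m_1)}(X)\simeq\tilde{E}C_p\smsh i_*L_{m_1}(S^0)\smsh X$, whose underlying spectrum is contractible. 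So your proposed $E(m_0,m_1)=i_*E(m_0)\smsh L_{\tilde{E}C_p\smsh i_*E(m_1)}(S^0)$ has $\ip{\Phi^e(E(m_0,m_1))}=\ip{*}$ rather than $\ip{E(m_0)}$. This is not a repairable slip within your framework: what you would need is a tensor idempotent $f$ with $\Phi^e(f)\simeq S^0$ and $\Phi^{C_p}(f)\simeq L_{m_1}(S^0)$, and producing such an idempotent (equivalently, showing the relevant ideal is smashing) is exactly the hard content of the proposition. Note also that your argument nowhere uses the hypothesis $m_0\le m_1+1$, which is a red flag. The paper's proof confronts this head-on: its first wedge summand ${C_p}_+\smsh E(m_0)$ is \emph{not} smashing by \hyperref[3.23]{3.23}, so \hyperref[2.12]{2.12} does not apply, and instead one shows directly that $\Phi^{C_p}(L_{m_0,m_1}(S^0))$ is $E(m_1)$-local via a fracture square identifying it as a pullback involving $L_{m_0}(S^0)^{tC_p}$, where the Hovey--Sadofsky theorem supplies the locality precisely when $m_0\le m_1+1$. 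Some such non-formal input is unavoidable here.
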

\begin{proof}
Setting $E(m_0,m_1)=({C_p}_+\smsh E(m_0))\vee(\tilde{E}C_p\smsh i_*E(m_1))$, one checks easily the claim about Bousfield classes. It suffices to show that for any family $\{Y_i\}$ of $E(m_0,m_1)$-locals, the map
\[\phi:\bigvee\limits_iY_i\to L_{m_0,m_1}\bg(\bigvee\limits_iY_i\bg)\]
is an equivalence. It induces an underlying equivalence as $i^{C_p}_{e}\circ L_{m_0,m_1}\simeq L_{m_0}\circ i^{C_p}_{e}$, so it suffices to show $\Phi^{C_p}(\phi)$ is an equivalence. If we knew that $\Phi^{C_{p}}(L_{m_0,m_1}(S^0))$ were $E(m_1)$-local, then $\Phi^{C_{p}}(L_{m_0,m_1}(Y))$ would be $E(m_1)$-local for any $Y$, as a module over an  $E(m_1)$-local ring spectrum. But then $\Phi^{C_{p}}(\phi)$ would be an $E(m_1)$-equivalence between $E(m_1)$-locals.

From Section \hyperref[sec3]{3}, we have
\aln{
L_{{C_p}_+\smsh E(m_0)}(X)&\simeq F(E{C_p}_+,i_*L_{m_0}(S^0)\smsh X)\\
L_{\tilde{E}C_p\smsh i_*E(m_1)}(X)&\simeq \tilde{E}C_p\smsh i_*L_{m_1}(S^0)\smsh X
}
It follows that $L_{{C_p}_+\smsh E(m_0)}\circ L_{\tilde{E}C_p\smsh i_*E(m_1)}\simeq*$, and hence by a general argument (see \cite{Bauer}), there is a natural homotopy pullback square
\[
\btz
L_{m_0,m_1}(X)\arrow[r]\arrow[d]&F(E{C_p}_+,i_*L_{m_0}(S^0)\smsh X)\arrow[d]\\
\tilde{E}C_p\smsh i_*L_{m_1}(S^0)\smsh X\arrow[r]&\tilde{E}C_p\smsh i_*L_{m_1}(S^0)\smsh F(E{C_p}_+,i_*L_{m_0}(S^0)\smsh X)
\etz
\]
Setting $X=S^0$, and applying $\Phi^{C_p}(-)$, we have a homotopy pullback square
\[
\btz
\Phi^{C_p}(L_{m_0,m_1}(S^0))\arrow[r]\arrow[d]&L_{m_0}(S^0)^{tC_p}\arrow[d]\\
L_{m_1}(S^0)\arrow[r]&L_{m_1}(L_{m_0}(S^0)^{tC_p})
\etz
\]
and by the main result of \cite{Hovey}, the right hand map is an equivalence if $m_0\le m_1+1$.
\end{proof}
\begin{theorem}\label{4.10}
For every sequence of natural numbers $m_0,\ldots,m_n$ satisfying $m_i\le m_{i+j}+1$ for all $0\le i\le n-1$ and $1\le j\le n-i$, there is a smashing $C_{p^n}$-spectrum $E(m_0,\ldots,m_n)$ with the property that
\[\Phi^{C_{p^i}}(E(m_0,\ldots,m_n))\simeq E(m_i)\]
for all $0\le i\le n$.  \end{theorem}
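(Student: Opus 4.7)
The plan is to induct on $n$, with base case $n=1$ given by Proposition \hyperref[4.9]{4.9}. For the inductive step I assume a smashing $C_{p^{n-1}}$-spectrum $E(m_1,\ldots,m_n)$ with $\Phi^{C_{p^i}}(E(m_1,\ldots,m_n))\simeq E(m_{i+1})$ for $0\le i\le n-1$ (the tail sequence satisfies the required inequalities). Letting $q:C_{p^n}\surj C_{p^{n-1}}$ be the quotient, set
\[
E(m_0,\ldots,m_n):=A\vee B,\quad A:=E{C_{p^n}}_+\smsh i_*E(m_0),\quad B:=\tilde{E}C_{p^n}\smsh q^*E(m_1,\ldots,m_n).
\]
Using $\Phi^{C_{p^i}}(E{C_{p^n}}_+)\simeq S^0$ only for $i=0$, $\Phi^{C_{p^i}}(\tilde{E}C_{p^n})\simeq S^0$ only for $i\ge 1$, and $\Phi^{C_{p^i}}\circ q^*\simeq\Phi^{C_{p^{i-1}}}$ for $i\ge 1$ from \hyperref[3.12]{3.12}(4), the inductive hypothesis immediately gives $\Phi^{C_{p^i}}(E(m_0,\ldots,m_n))\simeq E(m_i)$ for all $i$.

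The substantive step is showing $L_E:=L_{E(m_0,\ldots,m_n)}$ is smashing, for which I follow the proof of \hyperref[4.9]{4.9}. Proposition \hyperref[3.21]{3.21} gives $L_A(X)\simeq F(E{C_{p^n}}_+,i_*L_{m_0}(S^0)\smsh X)$, and since $\tilde{E}C_{p^n}$ is a smashing idempotent and $q^*E(m_1,\ldots,m_n)$ is smashing by \hyperref[3.12]{3.12}(4), Corollary \hyperref[2.12]{2.12} gives $L_B(X)\simeq\tilde{E}C_{p^n}\smsh q^*L_{E(m_1,\ldots,m_n)}(S^0)\smsh X$. Because $i^{C_{p^n}}_e\tilde{E}C_{p^n}\simeq*$, the underlying spectrum of $L_B(X)$ vanishes, so $L_A\circ L_B\simeq *$, and Bauer's chromatic fracture \cite{Bauer} presents $L_E(X)$ as the pullback of $L_A(X)\to L_B L_A(X)\leftarrow L_B(X)$. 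Applying $\Phi^{C_{p^i}}$ at $X=S^0$ for $i\ge 1$, using $\Phi^H F(E{C_{p^n}}_+,i_*W)\simeq W^{tH}$ and $\Phi^{C_{p^{i-1}}}L_{E(m_1,\ldots,m_n)}(S^0)\simeq L_{m_i}(S^0)$ (by the inductive hypothesis and \hyperref[3.12]{3.12}(3)), yields the pullback
\[
\btz
\Phi^{C_{p^i}}(L_E(S^0))\arrow[r]\arrow[d]&L_{m_0}(S^0)^{tC_{p^i}}\arrow[d]\\
L_{m_i}(S^0)\arrow[r]&L_{m_i}(L_{m_0}(S^0)^{tC_{p^i}})
\etz
\]
where the bottom-right identification uses that $L_{m_i}$ is smashing nonequivariantly. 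By Hovey \cite{Hovey}, the right vertical is an equivalence precisely when $m_0\le m_i+1$, which is the $(k,j)=(0,i)$ instance of the given inequalities; hence the pullback collapses and $\Phi^{C_{p^i}}(L_E(S^0))\simeq L_{m_i}(S^0)$ is $E(m_i)$-local. The $i=0$ case, where the bottom row vanishes, gives $\Phi^{\{e\}}(L_E(S^0))\simeq L_{m_0}(S^0)$.

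With these local computations in hand, I conclude as in \hyperref[4.9]{4.9}: for any family $\{Y_j\}$ of $E$-locals, the comparison $\phi:\bigvee_j Y_j\to L_E(\bigvee_j Y_j)$ is an $E$-equivalence, so each $\Phi^{C_{p^i}}(\phi)$ is an $E(m_i)$-equivalence; the source (a coproduct of $E(m_i)$-local spectra, using nonequivariant smashing of $L_{m_i}$) and the target (a module over the $E(m_i)$-local ring spectrum $\Phi^{C_{p^i}}(L_E(S^0))$) are both $E(m_i)$-local, so $\Phi^{C_{p^i}}(\phi)$ is an equivalence for every $i$, and joint conservativity of the $\Phi^{C_{p^i}}$ finishes. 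The hardest part is orienting the fracture correctly (so that $L_A\circ L_B\simeq*$ rather than the reverse) and verifying that Hovey's theorem applies at every level $i\ge 1$; the inequalities $m_0\le m_i+1$ in the hypothesis are exactly what keeps the Tate terms $L_{m_0}(S^0)^{tC_{p^i}}$ inside the appropriate chromatic range.
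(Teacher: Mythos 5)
Your proof is correct, but it takes a genuinely different route from the paper's. The paper explicitly declines to prove that the ``naive'' split spectrum $(E{C_{p^n}}_+\smsh i_*E(m_0))\vee(\tilde{E}C_{p^n}\smsh q^*E(m_1,\ldots,m_n))$ is smashing; instead it builds a Bousfield-equivalent representative that is \emph{manifestly} smashing by a three-way case analysis on how $m_0$ compares to $m_1$: inflation $q^*E(m_1,\ldots,m_n)$ when $m_0=m_1$, the wedge $i_*E(m_0)\vee(\tilde{E}C_{p^n}\smsh q^*E(m_1,\ldots,m_n))$ when $m_0<m_1$, and a norm $N_{C_p}^{C_{p^n}}E(m_0,m_1)$ of the $C_p$-spectrum from 4.9 when $m_0=m_1+1$, each of which is smashing by the closure properties of 3.12 and 2.12. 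The hard analytic input (Hovey--Sadofsky) is thus quarantined in the $C_p$ base case. You instead run the fracture-square argument of 4.9 at every level of geometric fixed points for the uniform construction. What your approach buys is uniformity (no case split, and it confirms the paper's assertion that the naive construction is itself smashing); what the paper's approach buys is that it never has to analyze the generalized Tate constructions $\Phi^{C_{p^i}}F(E{C_{p^n}}_+,i_*W)$ for $i\ge 2$.

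That last point is the one genuine inaccuracy in your write-up: for $i\ge 2$ the identification $\Phi^{C_{p^i}}F(E{C_{p^n}}_+,i_*W)\simeq W^{tC_{p^i}}$ is false, since $\Phi^{C_{p^i}}$ is computed by smashing with $\tilde{E}\mc P$ for $\mc P$ the family of \emph{proper} subgroups of $C_{p^i}$, not with $\tilde{E}C_{p^i}$. What is true (and what the paper itself is careful to say in the proof of 3.22) is that this geometric fixed point object is a \emph{module} over the ring $W^{tC_{p^i}}$, because $\tilde{E}\mc P$ is a $\tilde{E}C_{p^i}$-algebra. Your argument survives this correction: all you need from the top-right corner $T_i$ of the $\Phi^{C_{p^i}}$-applied square is that it is $E(m_i)$-local, and $T_i$ is a module over $L_{m_0}(S^0)^{tC_{p^i}}$, which is $E(m_0-1)$-local by Hovey--Sadofsky and hence $E(m_i)$-local when $m_0\le m_i+1$; modules over local ring spectra are local by 2.5, so the right vertical map is an equivalence and the square still collapses to give $\Phi^{C_{p^i}}(L_E(S^0))\simeq L_{m_i}(S^0)$. (Relatedly, ``precisely when'' overstates Hovey--Sadofsky, which gives only the sufficiency you need, and in the final step you should justify that each $\Phi^{C_{p^i}}(Y_j)$ is $E(m_i)$-local by the same module-over-$\Phi^{C_{p^i}}(L_E(S^0))$ argument you use for the target.) With those repairs the proof is complete.
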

\begin{proof}
We proceed by induction on $n$, and we may assume $n>1$ by the previous proposition. As stated above, it suffices to show there is a spectrum  $E(m_0,\ldots,m_n)$ with the property that
\[\ip{\Phi^{C_{p^i}}(E(m_0,\ldots,m_n))}=\ip{ E(m_i)}\]
for all $i$. There are 3 cases to check:

(i) $m_0=m_1$: By induction, we may assume there is a smashing $C_{p^{n-1}}$-spectrum $E(m_1,\ldots,m_n)$ with the stated properties. Let $q:C_{p^n}\to C_{p^{n-1}}$ be the usual quotient map. Then $E(m_0,\ldots,m_n):=q^*E(m_1,\ldots,m_n)$ is a smashing $C_{p^n}$-spectrum and
\[\Phi^{C_{p^i}}(q^*E(m_1,\ldots,m_n))=\begin{cases}\Phi^{C_{p^{i-1}}}(E(m_1,\ldots,m_n))&i>0\\\Phi^{\{e\}}(E(m_1,\ldots,m_n))&i=0\end{cases}\]

(ii) $m_0<m_1$. Here we set 
\[E(m_0,\ldots,m_n):=i_*E(m_0)\vee(\tilde{E}C_{p^n}\smsh q^*E(m_1,\ldots,m_n))\] 
This is a smashing $C_{p^n}$-spectrum as in \hyperref[2.12]{2.12}, and we have
\[\Phi^{C_{p^i}}(E(m_0,\ldots,m_n))=\begin{cases}E(m_0)\vee\Phi^{C_{p^{i-1}}}(E(m_1,\ldots,m_n))&i>0\\E(m_0)&i=0\end{cases}\]
Note however that $m_0\le m_i$ for all $i>0$ as $m_i\ge m_1-1$ for all $i>1$, hence $\ip{E(m_0)\vee E(m_i)}=\ip{E(m_i)}$.

(iii) $m_0=m_1+1$. Since we have assumed $n>1$, we can form the smashing $C_{p^n}$-spectrum
\[E(m_0,\ldots,m_n):=N_{C_p}^{C_{p^{n+1}}}E(m_0,m_1)\vee(\tilde{E}C_{p^n}\smsh q^*E(m_1,\ldots,m_n))\]
and we have
\[\Phi^{C_{p^i}}(E(m_0,\ldots,m_n))=\begin{cases}\Phi^{C_p}(E(m_0,m_1))^{\smsh k(i)}\vee \Phi^{C_{p^{i-1}}}(E(m_1,\ldots,m_n))&i>0\\E(m_0)^{\smsh p^{n-1}} &i=0\end{cases}\]
where $k(i)$ is some positive integer that won't affect the Bousfield class. Note that since $m_0=m_1+1$, $m_i\ge m_1$ for all $i>0$, hence we have
\[\ip{\Phi^{C_{p^i}}(E(m_0,\ldots,m_n))}=\ip{E(m_1)}\vee\ip{E(m_i)}=\ip{E(m_i)}\]
for $i>0$, and
\[\ip{\Phi^{\{e\}}(E(m_0,\ldots,m_n))}=\ip{E(m_0)^{\smsh p^{n-1}}}=\ip{E(m_0)}\]
\end{proof}

\section{Consequences for Localizations of $N_\infty$-algebras}\label{sec5}
We begin this section by recalling a surprising theorem of Blumberg and Hill. In this section, $\tx{Map}_G(-,-)$ will denote the $G$-space of maps in the category of $G$-spaces.
\begin{theorem}\label{5.1}
$($\cite{BH}\textnormal{, Theorem 1.4}$)$ If $\mc O$ is an $N_\infty$-operad, and $R$ is an $\mc O$-algebra such that $R$ is cofree, then $R$ is equivalent (as an $\mc O$-algebra) to a genuine $G$-$E_\infty$ ring. 
\end{theorem}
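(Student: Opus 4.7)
The plan is to promote $R$ to a genuine $G$-$E_\infty$ ring by showing that the forgetful functor from genuine $G$-$E_\infty$-algebras to $\mc O$-algebras, restricted to the cofree subcategory, is an equivalence. The guiding principle is that all $N_\infty$-operads collapse to the same thing after passage to the Borel world, where no nontrivial indexing system is visible.

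First, I would pick a map of $N_\infty$-operads $\iota:\mc O\to\mc O_\infty$, with $\mc O_\infty$ a genuine $G$-$E_\infty$-operad (one whose indexing system consists of all finite $G$-sets). Standard $\infty$-operadic machinery produces the restriction functor $\iota^*$ between the relevant categories of algebras together with its left adjoint $\iota_!$ given by operadic extension of scalars. The theorem then reduces to showing that, for a cofree $\mc O$-algebra $R$, the unit $R\to\iota^*\iota_!R$ is an equivalence, at which point $\iota_!R$ furnishes the desired genuine $E_\infty$ lift.

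Second, I would use the symmetric monoidal equivalence between the full subcategory of cofree $G$-spectra and the category of spectra with $G$-action (Borel $G$-spectra), mediated by $F(EG_+,-)$. The key point is that the data of an $N_\infty$-operad, namely the $(G\times\Sigma_n)$-spaces $\mc O(n)$ whose isotropy determines the admissible norms, becomes homotopically inessential after smashing with $EG_+$: the $G$-action on $\mc O(n)$ is freely resolved, and the distinction between admissible and inadmissible orbits collapses. Consequently the $\mc O$- and $\mc O_\infty$-algebra structures on a cofree spectrum $R$ are both determined by the same underlying coherent multiplicative data on the spectrum $i^G_e R$ equipped with its $G$-action.

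The main step is to actually produce the genuine $E_\infty$-structure. I would work entirely inside Borel $G$-spectra, where one can unambiguously promote the underlying $E_\infty$-with-$G$-action structure on $R$ to an $\mc O_\infty$-algebra (since in the Borel category any two $N_\infty$-operads are weakly equivalent), and then transport back to $Sp^G$ along the symmetric monoidal cofree functor to obtain a genuine $G$-$E_\infty$-structure on $R$ whose restriction along $\iota$ recovers the given $\mc O$-algebra structure. The hard part, as always with operadic coherence, is producing the upgraded structure not merely levelwise but as a map of $\infty$-operads into the endomorphism operad of $R$; this is where the symmetric monoidality of $F(EG_+,-)$ does essentially all the work, since on Borel objects there is no genuine choice of operadic structure to make.
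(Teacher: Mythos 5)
There is a genuine gap, and it sits exactly at what you call the main step. A genuine $G$-$E_\infty$-structure differs from a naive one by the indexed multiplications $N_H^GR\to R$, and these are not visible to the symmetric monoidal structure alone; so the claim that ``the symmetric monoidality of $F(EG_+,-)$ does essentially all the work'' cannot be right as stated. The equivalence between cofree $G$-spectra (with the localized smash product) and Borel $G$-spectra transports naive $E_\infty$-structures, but producing the norms requires identifying the operad that actually acts on $F(EG_+,R)$ and computing its isotropy. Moreover, the one justification you offer --- that smashing with $EG_+$ freely resolves the $G$-action on $\mc O_n$ so that ``the distinction between admissible and inadmissible orbits collapses'' --- points in the wrong direction. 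A product $E\mc F'\times EG$ is a universal space for the \emph{intersection} of the corresponding families of graph subgroups, so tensoring with $EG$ can only shrink the family of admissible sets; it degrades every $N_\infty$-operad to one carrying no norms. Taken at face value, your mechanism would show only that a cofree $\mc O$-algebra is a naive $E_\infty$-ring, which is not the theorem.

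The paper's proof supplies precisely the missing ingredient, and it is the \emph{cotensor} rather than the tensor: since $R\simeq F(EG_+,R)$, the spectrum $R$ is canonically an algebra over $\mathrm{Map}_G(EG,\mc O)$, and one computes levelwise that a map $E\mc F'\to\mathrm{Map}_G(EG,\mc O_n)$ is adjoint to a map $E\mc F'\times EG\to\mc O_n$, which always exists because $E\mc F'\times EG$ is universal for the minimal family; hence $\mathrm{Map}_G(EG,\mc O_n)$ is terminal among the relevant universal spaces, i.e.\ an $E_G\Sigma_n$, and $\mathrm{Map}_G(EG,\mc O)$ is a complete $N_\infty$-operad. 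This computation is the precise content of your unproved assertion that ``in the Borel category any two $N_\infty$-operads are weakly equivalent,'' and without it the argument is circular. (Separately, the opening reduction to showing that the unit $R\to\iota^*\iota_!R$ is an equivalence is set up and then abandoned; once the cotensor operad is identified, the left adjoint $\iota_!$ is not needed at all.)
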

\begin{proof}
$R\simeq F(EG_+,R)$, and since $R$ is an $\mc O$-algebra, $F(EG_+,R)$ is canonically a $\tx{Map}_G(EG,\mc O)$-algebra. Each $\tx{Map}_G(EG,\mc O_n)$ is a universal space for some family $\mc F$ of graph subgroups of $G\times\Sigma_n$, and if $\mc F'$ is any other such family, a map
\[E\mc F'\to \tx{Map}_G(EG,\mc O_n)\]
is the same thing  as a map
\[E\mc F'\times EG\to\mc O_n\]
But $E\mc F'\times EG\simeq EG$, hence there is always such a map, as $EG$ is initial in the category of such universal spaces. Therefore $\tx{Map}_G(EG,\mc O_n)$ is terminal, so it is an $E_G\Sigma_n$, and $\tx{Map}_G(EG,\mc O)$ is equivalent to the terminal $N_\infty$-operad.
\end{proof}
This is an extremely useful theorem: many genuine equivariant homotopy types come naturally as cofree spectra equipped with naive $E_\infty$-structures. The Lubin-Tate theories $E_{(k,\mathbb G)}$ with their actions by (subgroups of) the Morava stabilizer group, furnished by the Goerss-Hopkins-Miller theorem \cite{Rezk}, come to us this way, and similarly for various equivariant forms of $TMF$. For example, the $C_2$-spectrum $Tmf_1(3)$ studied by Hill and Meier \cite{HM}. These cofree theories $E$ therefore come equipped with canonical maps of genuine commutative ring spectra
\[N^TE\to E\]
for finite $G$-sets $T$. These maps play an essential role in computations involving the above spectra, see for example Section 6 of \cite{Hahn} in the $E_{(k,\mb G)}$-case.

We give a series of generalizations of this result that concern $H$-cofree $G$-spectra and induced localizations. This is a natural direction of generalization as $F(EG_+,-)$ is simply the induced Bousfield localization functor $L_{G_+}(-)$. Let $E\in Sp^G$ and let $\un{\mc Z_E}$ denote the nonunital symmetric monoidal coefficient system (as in \cite{HH}) of $E$-acyclics. That is, $\un{\mc Z_E}$ is the contravariant (pseudo-)functor from the orbit category $\mc O_G$ to nonunital symmetric monoidal categories with values
\[\un{\mc Z_E}(G/H)=\mc Z_{i^G_HE}\]
We now recall the following theorem of Hill-Hopkins and Gutierrez-White:
\begin{theorem}\label{5.2}
\cite{HH}\cite{GW} Let $\mc O$ be an $N_\infty$-operad for the group $G$, and $E\in Sp^G$. Then $L_E(-)$ preserves $\mc O$-algebras if and only if for all $K\subset H\subset G$ such that $H/K$ is an admissible $H$-set of $\mc O$,
\[N^{H/K}(\un{\mc Z_E}(G/H))\subset \un{\mc Z_E}(G/H)\]
\end{theorem}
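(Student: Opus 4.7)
The plan is to prove both implications by unpacking the principle that an $N_\infty$-operad $\mc O$ encodes, on any $\mc O$-algebra $R$, structure maps $N^{H/K}(i_K^H R) \to i_H^G R$ for each admissible $H/K$, and that preservation of $\mc O$-algebras under $L_E$ is controlled by whether each such norm functor preserves $E$-equivalences, or equivalently, $E$-acyclics.

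For the forward direction, assume $L_E$ preserves $\mc O$-algebras and fix an admissible $H/K$ together with $Z \in \un{\mc Z_E}(G/K)$. One tests the condition against a suitable free $\mc O$-algebra built from an induced spectrum $G_+ \smsh_K Z$. By hypothesis, $L_E$ applied to this free algebra is again an $\mc O$-algebra, so its structure map at $H/K$ provides a norm constraint. The key point is that the $H$-restriction of the free $\mc O$-algebra on $G_+ \smsh_K Z$ splits off a summand containing $N^{H/K}(Z)$; tracking this summand under the $\mc O$-algebra map on the localization forces $N^{H/K}(Z)$ to lie in $\mc Z_{i_H^G E}$, since $Z$ already lies in $\mc Z_{i_K^G E}$.

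For the converse, assume the norm-closure condition and construct a transferred model structure on $\mc O$-algebras whose weak equivalences are the underlying $E$-equivalences. The essential check is that pushouts of the form
$$ R \longrightarrow R \coprod_{\mb F_\mc O(A)} \mb F_\mc O(B) $$
along a generating $E$-acyclic cofibration $A \inj B$ are underlying $E$-equivalences. Filter such a pushout by the standard enveloping-operad filtration; its associated graded at each stage decomposes as a wedge of norms $N^{H/K}$, with $H/K$ ranging over admissible sets of $\mc O$, applied to relative smash products built from $A$, $B$, and $R$. The norm-closure hypothesis on $\un{\mc Z_E}$ is precisely what is needed to conclude that each filtration quotient is $E$-acyclic, hence that the total pushout is an $E$-equivalence. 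The transferred model structure then exists, and its fibrant replacement functor equips $L_E R$ with a canonical $\mc O$-algebra structure lifting the underlying localization.

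The main obstacle is the enveloping-operad filtration analysis: one must produce, for the free $\mc O$-algebra on a cofibration, a natural filtration whose associated graded splits on the underlying level into norms indexed by exactly the admissible sets of $\mc O$. This is the technical core of both \cite{HH} and \cite{GW}, and it is where the combinatorial content of $N_\infty$-operads, in the guise of indexing systems of admissible orbits, is used in an essential way.
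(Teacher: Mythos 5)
This statement is recalled in the paper from \cite{HH} and \cite{GW} with no proof supplied, so there is no internal argument to compare against; your outline has to be judged against the proofs in those references, and in broad strokes it follows them. The sufficiency direction is the standard one: transfer a model structure (or semi-model structure) to $\mc O$-algebras whose weak equivalences are underlying $E$-equivalences, and verify the pushout axiom by filtering free pushouts so that the associated graded is built from norms $N^{H/K}$ over admissible $H/K$; the norm-closure hypothesis on $\un{\mc Z_E}$ is exactly what makes each filtration quotient acyclic. That is the technical core of \cite{GW}, as you say.

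The forward direction as you have written it has a gap. Knowing that $L_E$ of the free algebra $\mb P_{\mc O}(G_+\smsh_K Z)$ is again an $\mc O$-algebra does not, by "tracking a summand," force $N^{H/K}(Z)$ to be acyclic --- the summand lives in the \emph{unlocalized} free algebra, and the claim that it dies in the localization is precisely the assertion that $\mb P_{\mc O}(G_+\smsh_K Z)\to \mb P_{\mc O}(*)=S^0$ is an $E$-equivalence, which is what you are trying to prove; as stated the argument is circular. The missing step is the lemma that preservation of $\mc O$-algebras forces the free functor $\mb P_{\mc O}$ to send $E$-equivalences of cofibrant spectra to $E$-equivalences: one checks via the free--forgetful adjunction that $\mb P_{\mc O}X\to\mb P_{\mc O}Y$ induces equivalences of mapping spaces into every $E$-local $\mc O$-algebra, and then uses the hypothesis that $L_E\mb P_{\mc O}X$ and $L_E\mb P_{\mc O}Y$ \emph{are} such algebras to conclude $L_E\mb P_{\mc O}X\simeq L_E\mb P_{\mc O}Y$. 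Only after this does the wedge decomposition of $\mb P_{\mc O}(G_+\smsh_K Z)$ into induced norms over admissible sets, together with the fact that $G_+\smsh_H(-)$ detects acyclicity, yield the norm condition. A minor further point: the theorem applies $N^{H/K}=N_K^H\circ i_K^H$ to objects of $\un{\mc Z_E}(G/H)$, whereas you start from $Z\in\un{\mc Z_E}(G/K)$; the two formulations are equivalent (any $i_K^GE$-acyclic $K$-spectrum is a retract of $i_K^H(H_+\smsh_K Z)$ with $H_+\smsh_K Z$ an $i_H^GE$-acyclic, and acyclics are closed under retracts), but you should state which one you are proving and why they agree.
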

\begin{proposition}\label{5.3}
Let $\mc O$ be an $N_\infty$ operad for the group $G$. If $H\subset G$ and $E\in Sp^H$ is such that $L_E(-)$ preserves $i^G_H\mc O$-algebras, then $L_{G_+\smsh_HE}(-)$ preserves $\mc O$-algebras.
\end{proposition}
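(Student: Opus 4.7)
The strategy is to apply Theorem \hyperref[5.2]{5.2} in both directions: once to reduce the conclusion to a norms-preserve-acyclics statement for $G_+\smsh_H E$, and once to feed in the hypothesis on $L_E$. By Theorem \hyperref[5.2]{5.2}, it suffices to show that whenever $K \subset H' \subset G$ and $H'/K$ is an admissible $H'$-set of $\mc O$, the functor $N^{H'/K}$ preserves $\mc Z_{i^G_{H'}(G_+\smsh_H E)}$. Fix such a pair $K \subset H'$ and a $Z \in \mc Z_{i^G_{H'}(G_+\smsh_H E)}$.

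Next, I would characterize the acyclics via the double coset formula
\[i^G_{H'}(G_+\smsh_H E) \simeq \bigvee_{[g]\in H'\backslash G / H} H'_+ \smsh_{H'\cap{^gH}} {^gE},\]
where $^gE$ denotes the $(H'\cap{^gH})$-spectrum obtained by restricting and conjugating. A spectrum $W \in Sp^{H'}$ is then $i^G_{H'}(G_+\smsh_H E)$-acyclic if and only if $i^{H'}_{H'\cap{^gH}} W \in \mc Z_{^gE}$ for each double coset $[g]$. Applying the analogous double coset formula for the restriction of a norm (\cite{HHR}, Proposition B.207) to $W = N^{H'/K}(Z) = N^{H'}_K(i^{H'}_K Z)$ yields
\[i^{H'}_{H'\cap{^gH}} N^{H'/K}(Z) \simeq \bigwedge_{[h]\in (H'\cap{^gH})\backslash H' / K} N^{H'\cap{^gH}}_{(H'\cap{^gH})\cap{^hK}}\bigl({^h\bigl(i^{H'}_K Z\bigr)}\bigr).\]
Since a smash product is $^gE$-acyclic whenever a single factor is, it suffices to control the $[h]=[e]$ factor, which a short bookkeeping check identifies with $N^{(H'\cap{^gH})/((H'\cap{^gH})\cap K)}(i^{H'}_{H'\cap{^gH}} Z)$.

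To apply the hypothesis, I would verify that the subquotient $(H'\cap{^gH})/((H'\cap{^gH})\cap K)$ is an admissible $(H'\cap{^gH})$-set for $i^G_{^gH}\mc O$. This holds because it is a single orbit of the restricted $(H'\cap{^gH})$-set $i^{H'}_{H'\cap{^gH}}(H'/K)$, and indexing systems for $N_\infty$-operads are closed under restriction to subgroups and passage to orbits; admissibility is moreover intrinsic to the subgroup pair, so it passes to $i^G_{^gH}\mc O$. Conjugation by $g$ gives a symmetric monoidal equivalence $Sp^{^gH}\simeq Sp^H$ intertwining $i^G_{^gH}\mc O$ with $i^G_H\mc O$ and $^gE$ with $E$, so the hypothesis on $L_E$, unpacked via Theorem \hyperref[5.2]{5.2}, translates across this equivalence to the assertion that $N^{(H'\cap{^gH})/((H'\cap{^gH})\cap K)}$ preserves $^gE$-acyclics. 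Combined with $i^{H'}_{H'\cap{^gH}} Z \in \mc Z_{^gE}$ (the $[g]$-th assumption on $Z$), this shows the $[h]=[e]$ factor, hence the whole smash, is $^gE$-acyclic. Running over all $[g]$ completes the argument.

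The main obstacle is the conjugation-and-admissibility bookkeeping in the last paragraph: one has to transport the hypothesis (which a priori only sees subgroups of $H$) across $G$-conjugation to the conjugate subgroup $H'\cap{^gH}\subset{^gH}$. This uses only the standard closure properties of indexing systems for $N_\infty$-operads (under restriction, orbits, and $G$-conjugation), but the notation is unpleasant; everything else is a formal double-coset computation.
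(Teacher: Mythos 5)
Your proof is correct and follows essentially the same route as the paper's: reduce via Theorem \hyperref[5.2]{5.2} to a norms-preserve-acyclics statement, decompose $i^G_{H'}(G_+\smsh_H E)$ by the double coset formula, apply the double coset formula for the restriction of a norm, isolate the identity-coset factor, and transport the hypothesis across $G$-conjugation using the standard closure properties of indexing systems. The only differences are notational (your $H', K$ versus the paper's $K, K'$, and your shorthand $\mc Z_{^gE}$ for the acyclics of the restricted conjugate).
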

\begin{proof}
Let $K'\subset K\subset G$ be such that $K/K'$ is an admissible $K$-set for $\mc O$, then we must show that
\[N^{K/K'}\un{\mc Z_{G_+\smsh_HE}}(G/K)\subset\un{\mc Z_{G_+\smsh_HE}}(G/K) \]
The double coset formula states
\[i^G_KG_+\smsh_HE=\bigvee\limits_{[g]\in K\backslash G/H}K_+\smsh_{K\cap {^gH}}i^{^gH}_{K\cap ^gH}({^gE})\]
hence $Z\in \un{\mc Z_{G_+\smsh_HE}}(G/K)\iff i^K_{K\cap ^gH}Z\in \mc Z_{i^{^gH}_{K\cap ^gH}(^gE)}$ for all $g\in G$. We therefore assume that $i^K_{K\cap ^gH}Z\in \mc Z_{i^{^gH}_{K\cap ^gH}(^gE)}$, and we must show that $i^K_{K\cap ^gH}N^{K/K'}(Z)\in\mc Z_{i^{^gH}_{K\cap ^gH}(^gE)}$, but we have
\[i^K_{K\cap ^gH}N^{K/K'}(Z)=\bigwedge\limits_{[h]\in (K\cap ^gH)\backslash K/K'}N^{K\cap ^gH}_{(K\cap ^gH)\cap ^h(K')}i^{^h(K')}_{(K\cap ^gH)\cap ^h(K')}{(^h(i^K_{K'}Z))}\]
This smash product is in $\mc Z_{i^{^gH}_{K\cap ^gH}(^gE)}$ if any of its factors is, hence we may take $h=e$ so that it suffices to show that
\[N^{K\cap ^gH}_{(K\cap ^gH)\cap K'}i^{K'}_{(K\cap ^gH)\cap K'}(i^K_{K'}Z)=N^{(K\cap ^gH)/((K\cap ^gH)\cap K')}(i^K_{K\cap ^gH}Z)\in \mc Z_{i^{^gH}_{K\cap ^gH}(^gE)}\]
Since $\mc O$ admits $K/K'$, $\mc O$ admits $(K\cap ^gH)/((K\cap ^gH)\cap K')$ since the admissible sets for $\mc O$ are closed under restriction in this way. If we knew then that $L_{^gE}$ preserves $i^G_{^gH}\mc O$-algebras, \hyperref[5.2]{5.2} would guarantee that $\mc Z_{i^{^gH}_{K\cap ^gH}(^gE)}$ is closed under this norm.

The fact that
\[L_E(-)\tx{ preserves }i^G_H\mc O\tx{-algebras}\implies L_{^gE}(-)\tx{ preserves }i^G_{^gH}\mc O\tx{-algebras}\]
follows from the fact that the admissible sets for $\mc O$ are closed under conjugacy, along with the observations
\[N^{K/K'}Z\smsh i^H_KE\simeq*\iff N^{^gK/^g(K')}(^gZ)\smsh i^{^gH}_{^gK}(^gE)\simeq *\]
\[Z\smsh i^H_KE\simeq*\iff ^gZ\cap i^{^gH}_{^gK}(^gE)\simeq*\]
which follow from the fact that $^g(-):Sp^H\to Sp^{^gH}$ is a symmetric monoidal equivalence of categories.
\end{proof}
\begin{corollary}\label{5.4}
If $H\subset G$, and $E\in Sp^H$ is such that $L_E(-)$ preserves $H$-commutative rings, then $L_{G_+\smsh_HE}(-)$ preserves $G$-commutative rings.
\end{corollary}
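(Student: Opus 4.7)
The plan is to obtain this corollary as a direct specialization of Proposition \hyperref[5.3]{5.3} to the terminal $N_\infty$-operad. I would take $\mc O$ to be the genuine $G$-$E_\infty$-operad, i.e., the terminal object among $N_\infty$-operads for $G$. By definition, $\mc O$-algebras are precisely the $G$-commutative ring spectra, and the admissible $G$-sets for $\mc O$ are all finite $G$-sets of the form $G/K$.

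The first step is to identify the restricted operad $i^G_H \mc O$ as the terminal $N_\infty$-operad for $H$. Since admissibility is controlled by the families of graph subgroups at each level of the operad, and restricting a terminal $N_\infty$-operad from $G$ to $H$ declares all finite $H$-sets $H/K'$ admissible, the restriction $i^G_H \mc O$ is a model for the genuine $H$-$E_\infty$-operad. Consequently, $i^G_H \mc O$-algebras are precisely the $H$-commutative ring spectra, and by hypothesis $L_E(-)$ preserves them.

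With this identification in place, applying Proposition \hyperref[5.3]{5.3} to the pair $(\mc O,E)$ immediately yields that $L_{G_+ \smsh_H E}(-)$ preserves $\mc O$-algebras, i.e., $G$-commutative ring spectra. The only substantive content beyond Proposition \hyperref[5.3]{5.3} is the operadic identification in the first step, and this is routine; there is no significant obstacle here, as the corollary is essentially just the specialization of Proposition \hyperref[5.3]{5.3} to the terminal operad.
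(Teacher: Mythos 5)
Your proof is correct and matches the paper's intent: Corollary \hyperref[5.4]{5.4} is exactly the specialization of Proposition \hyperref[5.3]{5.3} to the terminal $N_\infty$-operad, using the standard fact that the restriction to $H$ of a genuine $G$-$E_\infty$-operad is a genuine $H$-$E_\infty$-operad. The paper offers no separate argument, so there is nothing further to compare.
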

As in \hyperref[5.1]{5.1}, we will see that the situation is actually better than this: induced localizations automatically upgrade the available norms for an $N_\infty$-algebra, and we make this precise using the results of Section \hyperref[sec3]{3}. We may give $E\mc F_H$ the trivial $\Sigma_n$-action, and as such it becomes the universal $G\times\Sigma_n$-space for the family
\[\mc F_{H\times\Sigma_n}:=\{\Lambda\subset G\times\Sigma_n\st \tx{pr}_1(\Lambda)\in\mc F_H\}\]
where $\tx{pr}_1:G\times\Sigma_n\to G$ is the projection onto the first factor. It is easy to check that if $X$ is any $G\times\Sigma_n$-space, then if we give the $G$-space $\tx{Map}_G(E\mc F_H,X)$ a $G\times\Sigma_n$-action by postcomposing with the action of $\Sigma_n$ on $X$, this is isomorphic as a $G\times\Sigma_n$-space to $\tx{Map}_{G\times\Sigma_n}(E\mc F_H,X)$, where $E\mc F_H$ has a trivial $\Sigma_n$ action as above. If $\mc O$ is any $N_\infty$-operad for the group $G$, it follows that $\tx{Map}_G(E\mc F_H,\mc O)$ is as well. Moreover, if $R$ is an algebra over $\mc O$, then $F(E{\mc {F}_H}_+,R)$ is an algebra over $\tx{Map}_G(E\mc F_H,\mc O)$. \hyperref[3.7]{3.7} and \hyperref[5.3]{5.3} together give: 
\begin{corollary}\label{5.5}
If $R\in Sp^G$, $E\in Sp^H$, and $\mc O$ is an $N_\infty$ operad for the group $G$ such that $R$ is an $\mc O$-algebra and $L_E$ preserves $i^G_H\mc O$-algebras, then $L_{G_+\smsh_HE}(R)$ is a $\mathrm{Map}_G(E{\mc F_{H}},\mc O)$-algebra. 
\end{corollary}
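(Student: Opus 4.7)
The plan is to combine Proposition 3.7 and Proposition 5.3 with the observation made immediately before the corollary. The key insight is that the induced localization $L_{G_+\smsh_HE}$ simultaneously preserves the $\mc O$-algebra structure (by 5.3) and produces an $H$-cofree object (by 3.7), and $H$-cofree completion of an $\mc O$-algebra automatically upgrades its operadic structure.

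First, I would invoke Proposition 5.3 directly to conclude that $L_{G_+\smsh_HE}(R)$ is itself an $\mc O$-algebra. The hypothesis of that proposition matches ours verbatim: $L_E$ preserves $i^G_H\mc O$-algebras, so $L_{G_+\smsh_HE}$ preserves $\mc O$-algebras. In particular, $L_{G_+\smsh_HE}(R)\in Sp^G$ is an $\mc O$-algebra.

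Next, since $L_{G_+\smsh_HE}(R)$ is by construction $G_+\smsh_HE$-local, Proposition 3.7 tells us it is $H$-cofree, i.e.\ the canonical map
\[L_{G_+\smsh_HE}(R)\x{\simeq} F(E{\mc F_H}_+,L_{G_+\smsh_HE}(R))\]
is an equivalence in $Sp^G$. Now apply the discussion preceding the corollary: for \emph{any} $\mc O$-algebra $R'$, the spectrum $F(E{\mc F_H}_+,R')$ carries a canonical $\tx{Map}_G(E\mc F_H,\mc O)$-algebra structure, since the $\tx{Map}_G(E\mc F_H,\mc O)$-action on $F(E{\mc F_H}_+,R')$ is inherited from the $\mc O$-action on $R'$ by postcomposition. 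Taking $R' = L_{G_+\smsh_HE}(R)$, which we have just shown is an $\mc O$-algebra, we conclude that $F(E{\mc F_H}_+,L_{G_+\smsh_HE}(R))$ is a $\tx{Map}_G(E\mc F_H,\mc O)$-algebra, and hence so is $L_{G_+\smsh_HE}(R)$ by transport along the equivalence above.

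There is no real obstacle here beyond bookkeeping: the proof is a three-line assembly of results already in hand. The one subtlety to flag is that one should check the two $\tx{Map}_G(E\mc F_H,\mc O)$-structures on $L_{G_+\smsh_HE}(R)$ that a reader might naturally consider, namely the one coming from its $\mc O$-algebra structure restricted along $\tx{Map}_G(E\mc F_H,\mc O)\to\mc O$ (which exists because cofreeness provides a map back, or more precisely because $E{\mc F_H}\to\pt$ is a map of $G$-spaces), and the one coming from pointwise postcomposition through the cofree equivalence, are compatible in the homotopy category. This is essentially the content of the construction recalled right before the statement of the corollary, so no new argument is needed.
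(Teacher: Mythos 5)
Your proof is correct and is exactly the argument the paper intends: apply Proposition 5.3 to get the $\mc O$-algebra structure on the localization, Proposition 3.7 to identify it with $F(E{\mc F_H}_+,L_{G_+\smsh_HE}(R))$, and the observation preceding the corollary to upgrade to a $\mathrm{Map}_G(E\mc F_H,\mc O)$-algebra structure. The paper records no further proof beyond citing 3.7 and 5.3, so your assembly (including the compatibility remark) matches its approach.
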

In the situation of the corollary, we find that $R$ acquires \ti{more} norms after localizing at $G_+\smsh_HE$ since  the collection of admissible sets for $\tx{Map}_G(E{\mc F_{H}},\mc O)$ contains that of $\mc O$. We determine now exactly which new norms it acquires. If $\mc O$ is any $N_\infty$-operad for the group $H$, then the coinduced operad $F_H(G,\mc O)$ is an $N_\infty$-operad for the group $G$ (\cite{BH}, 6.14), and we have the following:
\begin{proposition}\label{5.6}
$\mathrm{Map}_G(E\mc F_H,\mc O)\simeq F_H(G,i^{G}_{H}\mc O)$ as $N_\infty$-operads.
\end{proposition}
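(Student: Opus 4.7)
The plan is to show that both operads arise as universal $(G \times \Sigma_n)$-spaces for the same family of graph subgroups of $G \times \Sigma_n$ at each arity $n$. Since $N_\infty$-operads are classified up to equivalence by their indexing systems, this will establish the equivalence.

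First, I would adapt the argument in the proof of \hyperref[5.1]{5.1} to identify $\mathrm{Map}_G(E\mc F_H, \mc O_n)$ as a universal space. The fixed points at a graph subgroup associated to a homomorphism $\sigma: K \to \Sigma_n$ reduce, via cartesian closure, to $\mathrm{Map}^K(i^G_K E\mc F_H, \mc O_n^\sigma)$, where $\mc O_n^\sigma$ denotes $\mc O_n$ with the $K$-action twisted by $\sigma$. Since $i^G_K E\mc F_H$ is the universal $K$-space for the family $\{L \subset K : L \in \mc F_H\}$, this fixed-point space is contractible precisely when, for every such $L$, the graph subgroup associated to $\sigma|_L$ lies in the family for $\mc O_n$, and it is empty otherwise.

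Next, I would compute the corresponding fixed points of $F_H(G, i^G_H \mc O_n)$ using the double-coset decomposition $G = \bigsqcup_{[g] \in H \backslash G / K} HgK$. A fixed element is a function $\phi: G \to \mc O_n$ satisfying $\phi(hxk) = h \sigma(k)^{-1} \phi(x)$, and is determined by its values at representatives $g_i$. The stabilizer condition forces each $\phi(g_i)$ to lie in the fixed points of $\mc O_n$ under the graph subgroup corresponding to $g_i^{-1}Hg_i \cap K \subset K$ with twist $\sigma|_{g_i^{-1}Hg_i \cap K}$, and the resulting product is contractible iff each of these graph subgroups lies in the family for $\mc O_n$.

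Finally, the two admissibility conditions coincide: each $g_i^{-1} H g_i \cap K$ lies in $\mc F_H$ by construction, and conversely every $L \subset K$ in $\mc F_H$ is $K$-conjugate to a subgroup of some $g_i^{-1} H g_i \cap K$. The closure of the family for $\mc O_n$ under $K$-conjugation and subgroup inclusion then gives the equivalence of the two conditions, so the two $N_\infty$-operads have the same indexing system. The main obstacle will be the fixed-point calculation for $F_H(G, i^G_H \mc O_n)$: one must carefully track the interaction between left $H$-equivariance and the right $K$-action twisted by $\sigma$ to extract the correct stabilizer data, including a $G$-conjugation to rewrite the resulting graph subgroups as subgroups of $K \times \Sigma_n$ before comparing with the family for $\mc O$.
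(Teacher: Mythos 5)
Your proposal is correct, and it reaches the same intermediate goal as the paper --- that $\mathrm{Map}_G(E\mathcal{F}_H,\mathcal{O})_n$ and $F_H(G,i^G_H\mathcal{O})_n$ are universal $G\times\Sigma_n$-spaces for the same family of graph subgroups --- but by a genuinely different route. The paper never computes fixed points directly: it uses the observation (via Elmendorf) that a universal space $E$ is $E\mathcal{F}$ for $\mathcal{F}$ the union of all families $\mathcal{F}'$ admitting a map $E\mathcal{F}'\to E$, and then identifies that union for each of the two spaces by pure adjunction juggling ($\mathrm{Map}_G(E\mathcal{F}_H,-)$ against $-\times E\mathcal{F}_H$, and $F_H(G,-)$ against restriction to $H\times\Sigma_n$), reducing both to the condition $\mathcal{F}\cap\mathcal{F}_{H\times\Sigma_n}\subset\mathcal{F}_{\mathcal{O}_n}$. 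You instead evaluate fixed points at each graph subgroup $\Lambda_{K,\sigma}$ head-on, using the double-coset decomposition of $F_H(G,-)$; your stabilizer analysis (landing in the graphs of $\sigma$ restricted to $g_i^{-1}Hg_i\cap K$, up to the conjugation you flag) and your matching of the two admissibility conditions via closure under $K$-conjugation and passage to subgroups are both correct, and parallel exactly the double-coset manipulations the paper performs elsewhere (e.g.\ in \hyperref[5.3]{5.3}). The one structural difference worth noting is the final step: the paper converts ``same family at each level'' into an operad equivalence by an elementary zig-zag through the product operad $\mathrm{Map}_G(E\mathcal{F}_H,\mathcal{O})\times F_H(G,i^G_H\mathcal{O})$, whose projections are then levelwise equivalences of universal spaces, whereas you invoke the classification of $N_\infty$-operads by indexing systems. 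Your appeal to the classification theorem is legitimate (it was known by the time of this paper), but it is a heavier input; the zig-zag costs nothing and keeps the argument self-contained, so you might prefer it.
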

\begin{proof}
We have a zig zag of maps of operads
\[
\btz
\tx{Map}_G(E\mc F_H,\mc O)&\tx{Map}_G(E\mc F_H,\mc O)\times F_H(G,i^{G}_{H}\mc O)\arrow[l]\arrow[r]&F_H(G,i^{G}_{H}\mc O)
\etz
\]
given by the projection maps. It follows that if, for all $n\ge0$, $\tx{Map}_G(E\mc F_H,\mc O)_n$ and $F_H(G,i^{G}_{H}\mc O)_n$ are universal $G\times\Sigma_n$-spaces for the same family of subgroups, then both projections are equivalences.

Let $\mc U_n$ be the category of universal $(G\times\Sigma_n)$-spaces $E\mc F$ for $\mc F$ a family of graph subgroups of $G\times\Sigma_n$. It is an immediate consequence of Elmendorf's theorem that $Ho(\mc U_n)$ is equivalent to the poset of families of graph subgroups of $G\times\Sigma_n$, via inclusion. Therefore, if $E\in\mc U_n$, then $E=E\mc F$ is a universal $G\times\Sigma_n$-space for the family of subgroups 
\[\mc F=\bigcup\limits_{\substack{\mc F'\st \\\exists E\mc F'\to E}}\mc F'\]
given by the union of families $\mc F'$ having the property that there is a $G\times\Sigma_n$-equivariant map $E\mc F'\to E$. For $\tx{Map}_G(E\mc F_H,\mc O_n)$, by adjunction, there is such a map if and only if there is a map
\[E\mc F\times E\mc F_H\to\mc O_n\]
Since $E\mc F\times E\mc F_H\simeq E(\mc F\cap \mc F_{H\times\Sigma_n})$, this happens if and only if
\[\mc F\cap \mc F_{H\times\Sigma_n}\subset\mc F_{\mc O_n}\]
One may show that $F_H(G,i^{G\times\Sigma_n}_{H\times\Sigma_n}\mc O_n)\cong F_{H\times\Sigma_n}(G\times\Sigma_n,i^{G\times\Sigma_n}_{H\times\Sigma_n}\mc O_n)$ so that there is a $G\times\Sigma_n$-map
\[E\mc F\to F_H(G,i^{G\times\Sigma_n}_{H\times\Sigma_n}\mc O_n)\]
if and only if there is a map
\[i^{G\times\Sigma_n}_{H\times\Sigma_n}E\mc F\to i^{G\times\Sigma_n}_{H\times\Sigma_n}\mc O_n\]
by adjunction. One checks easily that these are the following universal $H\times\Sigma_n$-spaces
\[i^{G\times\Sigma_n}_{H\times\Sigma_n}E\mc F=E(\Gamma\subset H\times\Sigma_n\st \Gamma\in\mc F)\]
\[i^{G\times\Sigma_n}_{H\times\Sigma_n}\mc O_n=E(\Gamma\subset H\times\Sigma_n\st \Gamma\in\mc O_n)\]
Hence the map above exists if and only if
\[\{\Gamma\subset H\times\Sigma_n\st \Gamma\in\mc F\}\subset\mc F_{\mc O_n}\]
Since $\mc F_{\mc O_n}$ is a family and in particular closed under subconjugates, this happens if and only if
\[\mc F\cap\{\Gamma\subset G\times\Sigma_n\st \Gamma\tx{ is subconjugate to }H\times\Sigma_n\}\subset\mc F_{\mc O_n}\]
It therefore suffices to observe that
\[\mc F_{H\times\Sigma_n}=\{\Gamma\subset G\times\Sigma_n\st \Gamma\tx{ is subconjugate to }H\times\Sigma_n\}\]
\end{proof}

\begin{corollary}\label{5.7}
For any $K\subset G$, a $K$-set $T$ is admissible for $\mathrm{Map}_G(E\mc F_H,\mc O)$ if and only if for all $g\in G$, $i^{gKg^{-1}}_{H\cap gKg^{-1}}{^gT}$ is admissible for $\mc O$. In particular, if $i^G_H\mc O$ is genuine $H$-$E_\infty$, then $\mathrm{Map}_G(E\mc F_H,\mc O)$ is genuine $G$-$E_\infty$.
\end{corollary}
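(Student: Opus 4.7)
The plan is to deduce this from Proposition 5.6 by unwinding what admissibility for the coinduced operad means at the level of graph subgroups. Recall that a $K$-set $T$ of cardinality $n$ is admissible for a $G$-operad $\mc P$ iff the graph $\Gamma_T \subset K \times \Sigma_n \subset G \times \Sigma_n$ of the action map $\tau : K \to \Sigma_n$ lies in the family $\mc F_{\mc P_n}$. The proof of Proposition 5.6 computed this family for $\mc P = \tx{Map}_G(E\mc F_H, \mc O)$: a subgroup $\Gamma \subset G \times \Sigma_n$ lies in $\mc F_{\mc P_n}$ iff every subgroup of $\Gamma$ that is subconjugate to $H \times \Sigma_n$ lies in $\mc F_{\mc O_n}$. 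Since $\mc F_{\mc O_n}$ is closed under passage to subgroups, this reduces to the condition that $\Gamma \cap (gHg^{-1} \times \Sigma_n) \in \mc F_{\mc O_n}$ for every $g \in G$.

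Specializing to $\Gamma = \Gamma_T = \{(k, \tau(k)) : k \in K\}$, one computes
\[\Gamma_T \cap (gHg^{-1} \times \Sigma_n) = \{(k, \tau(k)) : k \in K \cap gHg^{-1}\},\]
which is precisely the graph subgroup of the restricted $(K \cap gHg^{-1})$-set $i^K_{K \cap gHg^{-1}} T$. Hence $T$ is admissible for $\tx{Map}_G(E\mc F_H, \mc O)$ iff $i^K_{K \cap gHg^{-1}} T$ is admissible for $\mc O$ for every $g \in G$. Reindexing $g \leftrightarrow g^{-1}$ and conjugating each such set by $g$ to obtain the corresponding $gKg^{-1}$-set (an operation preserving admissibility, since $\mc F_{\mc O_n}$ is closed under conjugacy in $G \times \Sigma_n$) transforms this into the stated form $i^{gKg^{-1}}_{H \cap gKg^{-1}}{^gT}$. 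For the parenthetical consequence, if $i^G_H \mc O$ is genuine $H$-$E_\infty$ then every $H'$-set with $H' \subset H$ is admissible for $\mc O$; since $H \cap gKg^{-1} \subset H$ for every $g$, the biconditional condition is then automatic for every $K \subset G$ and every $K$-set $T$, so $\tx{Map}_G(E\mc F_H, \mc O)$ is genuine $G$-$E_\infty$.

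The main burden is bookkeeping: translating between the graph-subgroup language of Proposition 5.6 and the admissible-set language of Theorem 5.2, and handling the conjugation conventions carefully so that the final form matches the statement. Once the identification $\Gamma_T \cap (gHg^{-1} \times \Sigma_n) = \Gamma_{i^K_{K \cap gHg^{-1}} T}$ is made explicit and one observes that the maximal subgroups of $\Gamma_T$ subconjugate to $H \times \Sigma_n$ are exhausted by these intersections as $g$ ranges over $G$, the rest is formal manipulation.
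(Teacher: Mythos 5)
Your proof is correct and follows the same basic route as the paper: everything is deduced from Proposition 5.6. The only difference is in how the admissible sets are extracted. The paper simply combines the statement of Proposition 5.6 (the identification $\mathrm{Map}_G(E\mc F_H,\mc O)\simeq F_H(G,i^G_H\mc O)$) with the known formula from Blumberg--Hill (their 6.16) for the admissible sets of a coinduced operad, after noting that for $K\subset H$ a $K$-set is admissible for $i^G_H\mc O$ iff it is admissible for $\mc O$. You instead bypass the coinduction identification and re-derive that formula directly from the family computation inside the proof of 5.6: a graph subgroup $\Gamma$ lies in the family of $\mathrm{Map}_G(E\mc F_H,\mc O)_n$ iff all its subconjugates lying in $\mc F_{H\times\Sigma_n}$ lie in $\mc F_{\mc O_n}$, which by closure under subgroups and conjugation reduces to the maximal intersections $\Gamma_T\cap(gHg^{-1}\times\Sigma_n)=\Gamma_{i^K_{K\cap gHg^{-1}}T}$; your reindexing $g\leftrightarrow g^{-1}$ and conjugation then match the stated form. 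This buys you a self-contained argument at the cost of redoing the bookkeeping that the citation to [BH, 6.16] absorbs; both are valid.
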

\begin{proof}
It is clear that if $K\subset H$, and $T$ is a $K$-set, then $i^{G}_H\mc O$ admits $T$ iff $\mc O$ admits $T$. Now we apply the previous proposition and (\cite{BH}, 6.16).
\end{proof}
The following is the most direct generalization of \hyperref[5.1]{5.1} above, in the case where $i^G_H\mc O$ is genuine $H$-$E_\infty$.
\begin{corollary}\label{5.8}
Let $R\in Sp^G$ be an algebra over an $N_\infty$-operad $\mc O$ such that $i^G_H\mc O$ is a genuine $H$-$E_\infty$-operad, and let $E\in Sp^H$. If $L_E(-)$ preserves $H$-commutative rings, then $L_{G_+\smsh_HE}(R)$ is a $G$-commutative ring. In particular, if $R$ is an $\mc O$-algebra such that $i^G_H\mc O$ is genuine $H$-$E_\infty$, then $F(E{\mc F_H}_+,R)$ is a $G$-commutative ring.
\end{corollary}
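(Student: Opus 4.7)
The plan is to assemble Corollaries \hyperref[5.5]{5.5}, \hyperref[5.6]{5.6}, and \hyperref[5.7]{5.7} in sequence. First I would observe that since $i^G_H\mc O$ is assumed to be a genuine $H$-$E_\infty$-operad, the notions of $i^G_H\mc O$-algebra and $H$-commutative ring coincide (up to equivalence in $\mc O$-algebras), because any two genuine $H$-$E_\infty$-operads are equivalent as $N_\infty$-operads. Consequently, the hypothesis that $L_E(-)$ preserves $H$-commutative rings is precisely what is needed to apply \hyperref[5.5]{5.5}, yielding that $L_{G_+\smsh_H E}(R)$ is canonically an algebra over the operad $\mathrm{Map}_G(E\mc F_H,\mc O)$.

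Next I would invoke \hyperref[5.7]{5.7}: under the hypothesis that $i^G_H\mc O$ is genuine $H$-$E_\infty$, the operad $\mathrm{Map}_G(E\mc F_H,\mc O)$ is genuine $G$-$E_\infty$. Therefore an algebra over it is equivalently a $G$-commutative ring, which gives the first claim. Note that \hyperref[5.6]{5.6} is the technical input being used here, as it identifies $\mathrm{Map}_G(E\mc F_H,\mc O)$ with $F_H(G,i^G_H\mc O)$, to which the Blumberg-Hill description of the admissible sets of a coinduced operad (\cite{BH}, 6.16) applies.

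For the \emph{in particular} statement, the strategy is to specialize $E$ to the sphere. Take $E=S^0\in Sp^H$; then $L_{S^0}=\mathrm{id}$, which trivially preserves $H$-commutative rings, so the first part of the corollary applies to this choice. On the other hand, $G_+\smsh_H S^0\simeq G/H_+$, and \hyperref[3.5]{3.5} identifies $L_{G/H_+}(R)\simeq F(E{\mc F_H}_+,R)$. Combining these, $F(E{\mc F_H}_+,R)$ inherits a canonical $G$-commutative ring structure.

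There is no genuine obstacle here beyond carefully matching hypotheses: all the heavy lifting has been done in the preceding propositions. The only subtle point is the first one above, namely that when $i^G_H\mc O$ is genuine $H$-$E_\infty$, preservation of $H$-commutative rings by $L_E$ is the same as preservation of $i^G_H\mc O$-algebras, which lets \hyperref[5.5]{5.5} take over.
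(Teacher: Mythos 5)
Your proof is correct and takes essentially the same route as the paper: the first assertion is obtained by combining \hyperref[5.5]{5.5} and \hyperref[5.7]{5.7} (your preliminary observation that genuine $H$-$E_\infty$ means $i^G_H\mc O$-algebras coincide with $H$-commutative rings is exactly the hypothesis-matching the paper leaves implicit), and the \emph{in particular} clause follows by specializing to $E=S^0$, using $G_+\smsh_H S^0\simeq G/H_+$ and $F(E{\mc F_H}_+,R)\simeq L_{G/H_+}(R)$.
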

\begin{proof}
Note that $F(E{\mc F_H}_+,R)\simeq L_{G/H_+}(R)\simeq L_{G_+\smsh_HS^0}(R)$, so that the second assertion follows from the first. For the first assertion, we simply combine \hyperref[5.5]{5.5} and \hyperref[5.7]{5.7}.
\end{proof}
\begin{example}\label{5.9}
$L_{E_G(m)}$ sends $\mc O$-algebras to $G$-commutative rings for all $n$ and $m$.
\end{example}

\section{Restriction of Idempotents along a Quasi-Galois Extension}\label{sec6}
We digress from categories of $G$-spectra to highlight the extent to which the results of Section \hyperref[sec3.3]{3.3} may be generalized to other settings in which Bousfield localization is possible. This is motivated by the following theorem of Balmer, Dell'Ambroglio, and Sanders:
\begin{theorem}\label{6.1}
$($\cite{BDS}\textnormal{, Theorem 1.1}$)$ For $H\subset G$ a subgroup, there is an equivalence of tt-categories
\[Ho(Sp^H)\simeq \mathrm{Mod}_{Ho(Sp^G)}(F(G/H_+,S^0))\]
where the latter is the category of modules in $Ho(Sp^G)$ over the ring spectrum $F(G/H_+,S^0)$. Under this equivalence, the functor $i^G_H(-)$ corresponds to extension of scalars along the unit map $S^0\to F(G/H_+,S^0)$, and $G_+\smsh_H(-)\simeq F_H(G_+,-)$ corresponds to restriction of scalars.
\end{theorem}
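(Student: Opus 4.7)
The plan is to exhibit a tt-equivalence between the two categories by explicitly constructing functors both ways and verifying they are mutually inverse using the separability of the ring spectrum $A := F(G/H_+, S^0)$. I begin by noting that since $G/H_+$ is self-dual in $Sp^G$ (a consequence of the Wirthm\"uller isomorphism for finite groups, with trivial dualizing representation), we have natural equivalences
\[
A \smsh X \simeq G/H_+ \smsh X \simeq G_+ \smsh_H i^G_H X
\]
for any $X \in Sp^G$. Thus extension of scalars along the unit $\eta: S^0 \to A$ is isomorphic to the endofunctor $G_+ \smsh_H \circ\, i^G_H$, and in particular for any $Y \in Sp^H$, the $G$-spectrum $G_+ \smsh_H Y$ carries a canonical $A$-module structure, yielding a candidate functor $\mc F: Sp^H \to \mathrm{Mod}_{Ho(Sp^G)}(A)$.

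To construct a quasi-inverse, I would use the double coset formula to split $i^G_H(G/H)_+ \simeq H/H_+ \vee T$ in pointed $H$-sets for some $H$-set $T$, producing an orthogonal decomposition $i^G_H A \simeq S^0 \vee F(T, S^0)$ in $Sp^H$ and a canonical idempotent $e_H \in \mathrm{End}_{Sp^H}(i^G_H A)$ projecting onto the $S^0$-summand. For an $A$-module $M$, define $\mc G(M) := e_H \cdot i^G_H M$, viewed as an $H$-spectrum via the $S^0$-factor of $i^G_H A$. A direct computation using the double coset formula on $i^G_H(G_+ \smsh_H Y)$ shows that the identity component recovers $Y$, giving $\mc G \circ \mc F \simeq \id_{Sp^H}$.

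The main obstacle is proving $\mc F \circ \mc G \simeq \id$ and tt-compatibility. For the former, I would invoke that $A$ is a separable commutative algebra: the diagonal $G/H_+ \to (G/H \times G/H)_+$ splits in pointed $G$-sets by collapsing non-diagonal orbits to the basepoint, and dualizing gives a bimodule section of the multiplication $\mu: A \smsh A \to A$. Separability underwrites a Barr--Beck style descent which reconstructs any $A$-module from its restriction to $Sp^H$ together with its module structure, supplying the natural equivalence $\mc F \circ \mc G \simeq \id$. The symmetric monoidal compatibility is implemented by the projection formula
\[
(G_+ \smsh_H Y_1) \smsh_A (G_+ \smsh_H Y_2) \simeq G_+ \smsh_H (Y_1 \smsh Y_2),
\]
which makes $\mc F$ strong symmetric monoidal (with exactness inherited from that of $G_+ \smsh_H(-)$). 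Finally, the identifications of $i^G_H$ with extension of scalars and of $G_+ \smsh_H(-) \simeq F_H(G_+, -)$ with restriction of scalars follow immediately from the first display and the definition of $\mc F$. A cleaner packaging is possible by invoking a general Morita theorem for separable commutative monoids in rigidly-compactly generated tt-categories: such a theorem guarantees that $\mathrm{Mod}(A)$ is a tt-category with restriction of scalars as the forgetful functor, so that the entire content of the theorem reduces to identifying $\mathrm{Mod}(A)$ with $Sp^H$ via $\mc F$, which is the content of the explicit construction above.
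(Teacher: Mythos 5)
This statement is quoted from \cite{BDS} and the paper offers no proof of its own, so the comparison has to be with the argument in \cite{BDS} itself; your sketch follows essentially that argument. The key ingredients you use --- self-duality of $G/H_+$ so that $A\smsh X\simeq G_+\smsh_H i^G_H X$, separability of $A$ witnessed by splitting the diagonal $G/H_+\to (G/H\times G/H)_+$ in pointed $G$-sets, the idempotent in $i^G_H A$ cutting out the identity double coset, and the projection formula for monoidality --- are exactly the ingredients of the original proof, and your construction of $\mc F$ and $\mc G$ is a repackaging of their fully-faithful-plus-essentially-surjective argument. One point to tighten: ``Barr--Beck style descent'' is not the right mechanism here, since you are working in the homotopy category, where no cosimplicial descent is available. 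What separability actually gives you is more elementary and sufficient: the bimodule section $\sigma$ of $\mu$ makes the action map $A\smsh M\to M$ a split epimorphism of $A$-modules, so every $A$-module is a retract of a free module $A\smsh M\simeq \mc F(i^G_H M)$; since $\mc F$ is fully faithful (which follows from your computation of $\mc G\circ\mc F$ together with the adjunction $[G_+\smsh_H Y_1, G_+\smsh_H Y_2]^G\cong [Y_1, i^G_H(G_+\smsh_H Y_2)]^H$ and the observation that $A$-linearity picks out the identity-coset summand) and $Ho(Sp^H)$ is idempotent complete, its essential image is closed under retracts and $\mc F$ is an equivalence. You correctly note at the end that separability is also what makes $\mathrm{Mod}_{Ho(Sp^G)}(A)$ triangulated in the first place; that caveat is essential and should be stated up front rather than as an afterthought.
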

Mathew, Noel, and Naumann upgraded this to an equivalence of symmetric monoidal $\infty$-categories \cite{MNN}
\[Sp^H\simeq \tx{Mod}_{Sp^G}(F(G/H_+,S^0))\]
and studied the extent to which a commutative algebra $A$ in a presentable, symmetric monoidal stable $\infty$-category $(\mc C,\otimes,\mathds 1)$ exhibited categorical properties similar to those seen in equivariant homotopy theory with $A=F(G/H_+,S^0)$. In our context, the analogy suggests that perhaps a smashing $A$-module $M$ will pull back to an object in $\mc C$ whose Bousfield localization functor becomes smashing after completion at $A$. That is, if $\eta:\mathds 1\to A$ is the unit map of $A$, if $M\in \tx{Mod}_{\mc C}(A)$ determines a smashing localization in $\tx{Mod}_{\mc C}(A)$, following \hyperref[3.20]{3.20}, we expect a formula
\[L_{\eta^*M}(-)=L_A(L_{\eta^*M}(\mathds{1})\otimes -)\]
in $\mc C$. However, \hyperref[3.24]{3.24} tells us that, even in the motivating example $S^0\to F(G/H_+,S^0)$, we need $H$ to be normal for such a formula to hold. Hence we are led to ask that $\eta$ be a quasi-Galois extension.

\subsection{Background on stable $\infty$-categories and quasi-Galois extensions.}\label{sec6.1} We review what is needed to establish the desired localization formulae for a quasi-Galois extension. We use the language of $\infty$-categories following \cite{HTT} and closely follow the discussion in Section 1 of \cite{MNN}, where more detail can be found. In all that follows, we will let $(\mc C,\otimes,\mathds{1})$ be a presentable, symmetric monoidal stable $\infty$-category in which $-\otimes-$ commutes with colimits in colimits in each variable.

\begin{definition}\label{6.2}
Let $M\in\mc C$. We let $\mc Z_{M}$ be the full subcategory of $\mc C$ consisting of those $Z\in\mc C$ such that $Z\otimes M\simeq *$. We let $\mc L_M$ denote the full subcategory of $\mc C$ consisting of those $Y\in \mc C$ such that the space $\tx{Map}_{\mc C}(Z,Y)$ is contractible for all $Z\in\mc Z_{M}$.
\end{definition}
It follows formally from (\cite{HTT}, Section 5.5) that $\mc L_M$ is a presentable stable $\infty$-category, and the inclusion $\mc L_M\inj\mc C$ admits a left adjoint, $L_M(-)$. Moreover, by (\cite{HA}, 2.2.1.9), $\mc L_M$ inherits the structure of a symmetric monoidal $\infty$-category so that $L_M:\mc C\to\mc L_M$ is symmetric monoidal. The tensor product in $\mc L_M$ is then necessarily given by the formula
\[L_M(X)\hat{\otimes} L_M(Y):=L_M(X\otimes Y)\]
With this in place, the discussion in Section \hyperref[sec2]{2} may be repeated in this setting \ti{mutatis mutandis}. In particular, we may use smashing localizations and tensor idempotents interchangeably (see \cite{HA}, Section 6.3 or \cite{GGN}, Section 3 for more details).

Suppose now we have an object $A\in\tx{CAlg}(\mc C)$ - this induces an Ind-Res adjunction
\[
\btz
\mc C=\tx{Mod}_{\mc C}(\mathds{1})\arrow[d,shift right=1ex,"\eta_*"']\\
\tx{Mod}_{\mc C}(A)\arrow[u,shift right=1ex,"\eta^*"']
\etz
\]
$\tx{Mod}_{\mc C}(A)$ is a presentable, symmetric monoidal stable $\infty$-category, $\eta_*$ is a symmetric monoidal functor, and the adjunction $\eta_*\dashv \eta^*$ satisfies the projection formula
\[N\otimes \eta^*(M)\simeq \eta^*(\eta_*N\otimes_AM)\]
(see \cite{MNN}, Section 5.2). Under the assumption that $A$ is dualizable in $\mc C$, \cite{MNN} deduce the following description of $\mc L_A$.\\

\begin{theorem}\label{6.3}
If $A$ is dualizable in $\mc C$, the functor $\eta_*$ descends to an equivalence of symmetric monoidal $\infty$-categories
\[
\btz
\mc L_A\simeq \mathrm{Tot}(\mathrm{Mod}_{\mc C}(A)\arrow[r,shift left=0.5ex]\arrow[r,shift right=0.5ex]&\mathrm{Mod}_{\mc C}(A\otimes A)\arrow[r,shift left=0.5ex]\arrow[r]\arrow[r,shift right=0.5ex]&\cdots)
\etz
\]
\end{theorem}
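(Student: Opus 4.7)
The plan is to prove this as a comonadic descent statement along the unit map $\eta : \mathds{1} \to A$, modeled on the proof sketched in Section 5 of \cite{MNN} and ultimately resting on the Barr--Beck--Lurie theorem (\cite{HA}, 4.7.5.2). First, I would set up the relevant adjunction at the level of $\mc L_A$: since every $A$-module is $A$-local (exactly as in \hyperref[2.5]{2.5}, with the same argument carried out in $\mc C$), the functor $\eta^* : \mathrm{Mod}_{\mc C}(A) \to \mc C$ factors through $\mc L_A$, and the composition $\bar\eta_* := \eta_* \circ \iota_{\mc L_A} : \mc L_A \to \mathrm{Mod}_{\mc C}(A)$ has right adjoint $\eta^*$ (the unit $L_A \circ \eta^*$ is just $\eta^*$ since the target is already local).

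Next, I would verify the two hypotheses of the comonadic Barr--Beck--Lurie theorem for the adjunction $\bar\eta_* \dashv \eta^*$. Conservativity of $\bar\eta_*$ is automatic: if $X \in \mc L_A$ and $A \otimes X \simeq 0$, then $X$ is simultaneously $A$-acyclic and $A$-local, hence $X \simeq 0$. For the second hypothesis---preservation of $\bar\eta_*$-split totalizations---I would use the assumption that $A$ is dualizable. Dualizability gives that $A \otimes (-) : \mc C \to \mc C$ is simultaneously a left and right adjoint (its left adjoint being $A^\vee \otimes (-)$), so $\eta_*$ and hence $\bar\eta_*$ preserves arbitrary limits, which is stronger than what is needed. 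This is the key place where dualizability of $A$ enters; I expect it to be the subtlest point to state cleanly, since the factorization through $\mc L_A$ has to be checked to still preserve the relevant limits, but since $\mc L_A \hookrightarrow \mc C$ preserves limits and $\bar\eta_*$ is the composite of this inclusion with the limit-preserving $\eta_*$, this is formal.

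Having established comonadicity, we obtain an equivalence
\[
\mc L_A \simeq \mathrm{coMod}_T(\mathrm{Mod}_{\mc C}(A))
\]
where $T = \bar\eta_* \circ \eta^*$ is the comonad on $\mathrm{Mod}_{\mc C}(A)$ sending $M \mapsto A \otimes \eta^* M$ with $A$-action via the left factor (equivalently, extension along $A \to A \otimes A$ along the first unit). The third step is to identify this category of $T$-comodules with the totalization of the Amitsur/cobar complex $[n] \mapsto \mathrm{Mod}_{\mc C}(A^{\otimes (n+1)})$; this is the standard description of comodules over the comonad attached to a commutative algebra in terms of descent data along its cobar resolution, carried out in the $\infty$-categorical setting in \cite{HA}, Section 4.7 (see also \cite{MNN}, Proposition 2.10 and Corollary 3.21).

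Finally, I would upgrade the equivalence to one of symmetric monoidal $\infty$-categories. Here the symmetric monoidal structure on $\mc L_A$ is the one inherited from $\mc C$ via $L_A$ (so that $L_A$ is symmetric monoidal), and the one on the totalization is the limit of the symmetric monoidal structures on each $\mathrm{Mod}_{\mc C}(A^{\otimes(n+1)})$. Because $\bar\eta_*$ is symmetric monoidal and the cobar coalgebra structure on $T$ is compatible with $\otimes$, the comparison functor is symmetric monoidal by the universal property of the totalization in symmetric monoidal $\infty$-categories (\cite{HA}, 3.2.2). I expect the main obstacle to be bookkeeping around this last symmetric monoidal enhancement rather than any genuine mathematical difficulty, since all constructions are natural and the hard analytic input is the dualizability hypothesis feeding into Barr--Beck--Lurie.
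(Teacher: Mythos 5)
The paper does not prove this statement at all: it is quoted directly from \cite{MNN} (see the sentence preceding it, ``\dots \cite{MNN} deduce the following description of $\mc L_A$''), so there is no internal argument to compare against. Your reconstruction is correct and is essentially the proof in the cited source: restrict the extension-of-scalars/forgetful adjunction to $\mc L_A$, check conservativity (an $A$-local, $A$-acyclic object is zero) and preservation of the relevant totalizations (dualizability makes $A\otimes(-)$ limit-preserving), apply comonadic Barr--Beck--Lurie, and then invoke the standard identification of comodules over the resulting comonad with the totalization of the Amitsur complex of module categories; only that last identification, together with the symmetric monoidal enhancement, is genuinely outsourced to \cite{HA} and \cite{MNN}, and that is exactly how the reference handles it as well.
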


In our motivating example of $A=F(G/H_+,S^0)\in\tx{CAlg}(Sp)$ for $H\lhd G$, the double coset formula allows us to identify the simplicial object on the right hand side as the cobar complex computing $(Sp^H)^{h(G/H)}$. This generalizes to the following situation:

\begin{definition}\label{6.4} Let $G$ be a finite group, $R\in\tx{CAlg}(\mc C)$, and $A\in \tx{Fun}(BG,\tx{CAlg}(\mc C)_{R/})$. Consider the diagram in $\tx{CAlg}(\mc C)$.
\[
\btz
R\arrow[r]\arrow[d]&A\arrow[ddr,"\Delta^{tw}",bend left]\arrow[d]\\
A\arrow[r]\arrow[rrd,"\Delta"',bend right]&A\otimes_RA\arrow[dr,dashed,"\phi"]\\
&&\prod\limits_{g\in G}A
\etz
\]
where $\pi_g\circ\Delta^{tw}=g:A\to A$. We say that $R\to A$ is a \tb{quasi-Galois extension} if $\phi$ is an equivalence.
\end{definition}

\begin{remark}\label{6.5}
If we required additionally that the morphism $R\to A^{hG}$ be an equivalence, this would be the usual definition of a Galois extension, due to Rognes \cite{Rognes}. This terminology is used in \cite{Quasi} where quasi-Galois extensions are studied in a tt-geometry context.
\end{remark}

As before, we will take $R=\mathds 1$, and we record the following immediate consequence of \hyperref[6.3]{6.3}:

\begin{corollary}\label{6.6}
If $A$ is dualizable in $\mc C$, and $\eta:\mathds 1\to A$ is a quasi-Galois extension, the functor $\eta_*$ descends to an equivalence of symmetric monoidal $\infty$-categories
\[
\mc L_A\simeq (\mathrm{Mod}_{\mc C}(A))^{hG}
\]
\end{corollary}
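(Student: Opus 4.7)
The plan is to combine Theorem 6.3 with an iterated use of the quasi-Galois identification $A \otimes A \simeq \prod_G A$. Theorem 6.3 already produces an equivalence
\[\mc L_A \simeq \mathrm{Tot}\bigl(\mathrm{Mod}_{\mc C}(A) \rightrightarrows \mathrm{Mod}_{\mc C}(A\otimes A) \to \cdots\bigr),\]
so the entire content of the corollary is to identify this totalization with the homotopy fixed point object $(\mathrm{Mod}_{\mc C}(A))^{hG}$. I would organize the argument as an identification of cosimplicial objects level-by-level, followed by a verification that the coface and codegeneracy maps match.

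First, I would iterate the equivalence defining quasi-Galois. Writing $A^{\otimes n}$ for the $n$-fold tensor product over $\mathds{1}$, an induction using the fact that $\phi : A \otimes A \simeq \prod_G A$ is an equivalence in $\mathrm{CAlg}(\mc C)$ together with the fact that the functor $A \otimes_R (-)$ preserves finite products of commutative algebras (since $\mc C$ is stable, finite products agree with finite coproducts in $\mathrm{CAlg}(\mc C)$) yields compatible equivalences of commutative algebras $A^{\otimes(n+1)} \simeq \prod_{G^n} A$, where the action of the various coface maps in the cobar complex on the indexing set $G^n$ corresponds to insertion of identities and to the multiplication $G \times G \to G$ (precisely as in the classical double coset computation for $F(G_+, S^0)^{\otimes(n+1)}$).

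Next, I would upgrade to module categories. Since $A$ is dualizable and $\mc C$ is stable presentable, the module category over a finite product of algebras decomposes as a product of module categories: $\mathrm{Mod}_{\mc C}(\prod_{G^n} A) \simeq \prod_{G^n} \mathrm{Mod}_{\mc C}(A)$, naturally in the algebra maps. Applying this to the equivalences from the previous step produces an equivalence of cosimplicial symmetric monoidal $\infty$-categories between the cobar complex of Theorem 6.3 and the standard cosimplicial object
\[\mathrm{Mod}_{\mc C}(A) \rightrightarrows \prod_{G} \mathrm{Mod}_{\mc C}(A) \to \prod_{G^2} \mathrm{Mod}_{\mc C}(A) \to \cdots\]
whose coface and codegeneracy maps are given by the $G$-action on $\mathrm{Mod}_{\mc C}(A)$ inherited from $A \in \mathrm{Fun}(BG, \mathrm{CAlg}(\mc C))$. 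Taking totalizations gives the desired identification with $(\mathrm{Mod}_{\mc C}(A))^{hG}$, and this transport of structure is automatically symmetric monoidal since each level equivalence is.

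The main obstacle is purely bookkeeping: matching the coface maps in the cobar complex (which involve $\eta$ in various slots) with the coface maps of the homotopy fixed point cosimplicial object (which involve the $G$-action on $A$). The key identity making this work is $\pi_g \circ \Delta^{tw} = g \cdot (-)$ built into the definition of quasi-Galois, and the compatibility of $\phi$ with the two codegeneracies of $A \otimes A$. Once this naturality is in hand at the level of algebras, passing to modules and totalizing is formal.
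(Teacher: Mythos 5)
Your argument is correct and is precisely the intended one: the paper treats \hyperref[6.6]{6.6} as an immediate consequence of \hyperref[6.3]{6.3}, with the quasi-Galois identification $A\otimes A\simeq\prod_G A$ iterated to recognize the cobar complex as the cosimplicial object computing $(\mathrm{Mod}_{\mc C}(A))^{hG}$, exactly as you spell out. Your version simply makes explicit the levelwise identifications and the matching of coface maps that the paper leaves implicit.
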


\begin{remark}\label{6.7}
If $\eta$ were a Galois extension, the dualizability condition on $A$ would be automatic (\cite{Rognes}, 6.2.1).
\end{remark}

When $\eta$ is a quasi-Galois extension, the projection formula gives the following decomposition, of which the double-coset formula for $i^G_H(G_+\smsh_H-)$ is a special case.
\begin{lemma}\label{6.8}
For $M\in\mathrm{Mod}_{\mc C}(A)$,
\[\eta_*\eta^*M=\bigoplus\limits_{g\in G}{^gM}\]
\end{lemma}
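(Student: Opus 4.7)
The plan is to unpack both functors using the definitions, then invoke the quasi-Galois isomorphism of Definition \hyperref[6.4]{6.4}. First I would observe that, by definition of the Ind-Res adjunction, $\eta^*M$ is simply $M$ regarded as an object of $\mc C$ (forgetting the $A$-action), while $\eta_*X\simeq A\otimes X$ for $X\in \mc C$, viewed as an $A$-module via the left factor. Combining these and using $M\simeq A\otimes_A M$, we can rewrite
\[
\eta_*\eta^*M \;\simeq\; A\otimes_{\mathds{1}} M \;\simeq\; (A\otimes_{\mathds{1}} A)\otimes_A M,
\]
where the relative tensor is formed over the right-hand copy of $A$ and the outer $A$-module structure comes from the left-hand copy.

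Next I would apply the quasi-Galois isomorphism $\phi\colon A\otimes_{\mathds{1}} A \xrightarrow{\simeq} \prod_{g\in G} A$. The key bookkeeping step is that $\phi$ is the twisted diagonal, whose $g$-th projection is $a\otimes b\mapsto a\cdot g(b)$. Under $\phi$, the left $A$-action (from the first factor of $A\otimes A$) transports to the diagonal $A$-action on $\prod_g A$, while the right $A$-action (from the second factor) transports to the action on the $g$-th factor twisted through the ring automorphism $g\colon A\to A$. Substituting this and using that the relative tensor product commutes with finite products yields
\[
(A\otimes_{\mathds{1}} A)\otimes_A M \;\simeq\; \prod_{g\in G}\bigl(A\otimes_{A,g} M\bigr) \;\simeq\; \prod_{g\in G} {}^g M,
\]
where ${}^gM$ denotes $M$ with its $A$-action conjugated by $g$. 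Finally, since $G$ is finite and $\mc C$ is stable, finite products and coproducts coincide, giving $\bigoplus_{g\in G}{}^gM$ as desired.

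The main obstacle is purely bookkeeping: tracking which of the two $A$-actions on $A\otimes_{\mathds{1}} A$ we are using at each stage and verifying that $\phi$ intertwines them with, respectively, the diagonal and twisted-pointwise actions on $\prod_{g\in G}A$. Once this is in hand, no further input beyond Definition \hyperref[6.4]{6.4} and the projection formula is needed, and the argument is the direct analog of the computation that turns the double coset formula into $i^G_H(G_+\smsh_H N)\simeq\bigoplus_{[g]\in G/H}{}^gN$ for $H\lhd G$, to which the lemma specializes in the motivating example of \hyperref[6.1]{6.1}.
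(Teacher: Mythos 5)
Your argument is correct and is exactly the intended one: the paper itself offers no written proof beyond the remark that "the projection formula gives" the decomposition, and your computation $\eta_*\eta^*M\simeq (A\otimes_{\mathds 1}A)\otimes_AM\simeq(\prod_{g\in G}A)\otimes_AM\simeq\bigoplus_{g\in G}{}^gM$, using that $\phi$ identifies the two $A$-module structures on $A\otimes A$ with the diagonal and $g$-twisted ones, is the standard way to fill this in. The bookkeeping of the twisted right action (up to the harmless reindexing $g\leftrightarrow g^{-1}$) and the identification of finite products with coproducts in the stable setting are both handled correctly.
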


\subsection{Smashing $A$-modules}\label{sec6.2}
Our desired localization formulae are of the form
\[L(-)=L_A(L(\mathds 1)\otimes-)\]
By definition of the symmetric monoidal structure in $\mc L_A$, producing a localization functor $L(-)$ on $\mc C$ given by such a formula is equivalent to producing a smashing localization in $\mc L_A$. Corollary \hyperref[6.6]{6.6} tells us that smashing localizations in $\mc L_A$ are the same thing as smashing localizations in $(\tx{Mod}_{\mc C}(A))^{hG}$. This allows us to produce smashing localizations in $\mc L_A$ from smashing localizations in $\tx{Mod}_{\mc C}(A)$ via norm functors.

\begin{construction}\label{6.9}
Let $(\mc D,\otimes,\mathds 1)$ be a presentable, symmetric monoidal $\infty$-category with $G$-action (e.g. $\tx{Mod}_{\mc C}(A)$ as above). There is a symmetric monoidal functor
\[N:\mc D\to \mc D^{hG}\]
such that the composite
\[\mc D\x{N}\mc D^{hG}\inj D\]
is given by the functor
\[M\mapsto\bigotimes\limits_{g\in G}{^gM}\]
\end{construction}

\begin{remark}\label{6.10}
There is a right adjoint 
\[\tx{Fun}(BG,\tx{SymMon}\infty\tx{-Cat})\to G\tx{-SymMon}\infty\tx{-Cat}\]
to the forgetful functor which sends $(\mc D,\otimes,\mathds 1)$, a presentable, symmetric monoidal $\infty$-category with $G$-action, to the $G$-symmetric monoidal $\infty$-category $\underline{\mc D}(G/H)=\mc D^{hH}$, with norm map $\underline{\mc D}(G/e)\to \underline{\mc D}(G/G)$ as in \hyperref[6.9]{6.9}. This is a higher algebra analog of the functor that sends a commutative ring with $G$-action to its fixed-point Tambara functor. An account of this construction is to appear in \cite{PHT}.

This construction appears in the context of the symmetric monoidal $G$-categories of Guillou, May, Merling, and Osorno (see 3.7 of \cite{GMMO}), the normed symmetric monoidal categories of Rubin (see 3.7 of \cite{Rubin}), and the symmetric monoidal mackey functors of Hill-Hopkins (see 2.6 \cite{HH}).
\end{remark}

By use of $N$, we may therefore send an idempotent $e$ in $\tx{Mod}_{\mc C}(A)$ to an idempotent $N(e)$ in $\mc L_A$. This determines some smashing localization in $\mc L_A$, and using \hyperref[6.8]{6.8}, we may identify its corresponding Bousfield class in terms of that of $e$. We have the following:

\begin{theorem}\label{6.11}
Suppose $(\mc C,\otimes,\mathds 1)$ and $A$ are as above. In particular, assume $\eta:\mathds 1\to A$ is a quasi-Galois extension and $A$ is dualizable in $\mc C$. If $M\in\mathrm{Mod}_{\mc C}(A)$ is smashing, then we have the formula in $\mc C$:
\[L_{\eta^*M}(-)=L_A(L_{\eta^*M}(\mathds 1)\otimes -)\]
\end{theorem}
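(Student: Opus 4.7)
The plan is to lift the smashing $M \in \mathrm{Mod}_{\mc C}(A)$ to an idempotent in $\mc L_A$ via the norm construction of Section 6.1, and identify the resulting smashing localization in $\mc L_A$ (composed with $L_A : \mc C \to \mc L_A$) with $L_{\eta^*M}$. Write $f := L_M(A) \in \mathrm{Mod}_{\mc C}(A)$ for the smashing idempotent associated to $M$, so $\ip{M} = \ip{f}$. Apply the norm $N$ of Construction 6.9 to obtain an idempotent $N(f)$ in $(\mathrm{Mod}_{\mc C}(A))^{hG}$, and let $R \in \mc L_A$ denote its image under the quasi-Galois equivalence of Corollary 6.6. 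The target will be to show $R \simeq L_{\eta^*M}(\mathds{1})$, after which the theorem falls out quickly.

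Two preparatory observations are essential. First, since $\eta^*M$ carries the structure of an $A$-module in $\mc C$, it is a retract of $A \otimes \eta^*M$, so $\mc Z_A \subset \mc Z_{\eta^*M}$; in particular every $\eta^*M$-local object lies in $\mc L_A$. Second, I establish an analog of Proposition 3.7: $Y \in \mc C$ is $\eta^*M$-local if and only if $Y \in \mc L_A$ and the underlying $A$-module of $\eta_* Y$ is $\bigoplus_{g \in G} {}^g M$-local in $\mathrm{Mod}_{\mc C}(A)$. The forward direction uses Lemma 6.8 and the adjunction $\eta_* \dashv \eta^*$, while the reverse uses the projection formula together with conservativity of $\eta^*$. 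Because $M$ is smashing, so is $\bigoplus_g {}^g M$ (by the analog of Corollary 2.12), with idempotent $\bigotimes_g {}^g f$; this is precisely the underlying $A$-module of $N(f)$, so the locality condition reads ``$\eta_* Y$ is a $\bigotimes_g {}^g f$-module.''

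To verify $R \simeq L_{\eta^*M}(\mathds{1})$, I check locality and the universal property of the unit map. Locality of $R$ follows from the Prop 3.7 analog: $R \in \mc L_A$, and its underlying $A$-module is $\bigotimes_g {}^g f$, trivially a module over itself. For the equivalence $\mathds{1} \to R$, factor as $\mathds{1} \to L_A\mathds{1} \to R$; the first map is an $A$-equivalence and hence an $\eta^*M$-equivalence by the first preparatory observation, and the fiber of the second map corresponds under $\mc L_A \simeq (\mathrm{Mod}_{\mc C}(A))^{hG}$ to the $\bigotimes_g {}^g f$-acyclic fiber of the idempotent cofiber sequence for $\bigotimes_g {}^g f$ in $\mathrm{Mod}_{\mc C}(A)$, which the Prop 3.7 analog for acyclics sends to an $\eta^*M$-acyclic object of $\mc C$.

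With $R \simeq L_{\eta^*M}(\mathds{1})$ in hand, the formula follows: $L_A(R \otimes X)$ lies in $\mc L_A$ with underlying $A$-module a $\bigotimes_g {}^g f$-module, so it is $\eta^*M$-local; and the map $X \to L_A(R \otimes X)$ factors as an $\eta^*M$-equivalence $X \to R \otimes X$ (using that $\mc Z_{\eta^*M}$ is an ideal) followed by an $A$-equivalence $R \otimes X \to L_A(R \otimes X)$, which is automatically an $\eta^*M$-equivalence. The main obstacle is the Prop 3.7 analog and, more broadly, carefully distinguishing between $N(f)$ viewed as an object of $\mathrm{Mod}_{\mc C}(A)$, of $(\mathrm{Mod}_{\mc C}(A))^{hG}$, and of $\mc L_A \subset \mc C$; it is the projection formula and the quasi-Galois equivalence that let one translate between these incarnations without losing control of the underlying $A$-module structure.
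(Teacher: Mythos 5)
Your overall strategy---push the idempotent of $M$ into $\mc L_A\simeq(\tx{Mod}_{\mc C}(A))^{hG}$ via the norm $N$ of Construction \hyperref[6.9]{6.9} and identify the result with $L_{\eta^*M}(\mathds 1)$---is essentially the paper's, and your preparatory observations (including the analog of Proposition \hyperref[3.7]{3.7}) are sound. But there is a genuine error in \emph{which} idempotent you norm. You apply $N$ to the right idempotent $f=L_M(A)$, so the underlying $A$-module of $R=N(f)$ is $\bigotimes_{g\in G}{}^gf$. By the analog of Corollary \hyperref[2.12]{2.12} this is the right idempotent of the \emph{smash} $\bigotimes_{g\in G}{}^gM$ (over $A$), not of the wedge: 2.12 says $L_{E_1\smsh\cdots\smsh E_n}(S^0)\simeq L_{E_1}(S^0)\smsh\cdots\smsh L_{E_n}(S^0)$, and it is the \emph{left} idempotents whose tensor product gives the wedge. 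Since Lemma \hyperref[6.8]{6.8} identifies $\eta_*\eta^*M$ with the wedge $\bigoplus_g{}^gM$, your $R$ carries the wrong Bousfield class whenever $\ip{\bigoplus_g{}^gM}\neq\ip{\bigotimes_g{}^gM}$ in $\tx{Mod}_{\mc C}(A)$, which happens as soon as the Galois conjugates ${}^gM$ are not all Bousfield equivalent. The step that actually breaks is the claim that the fiber of $L_A\mathds 1\to R$ is $\eta^*M$-acyclic: that fiber has underlying $A$-module $e_{\bigotimes_g{}^gM}$, which lies in $\mc Z_{\bigotimes_g{}^gM}\supseteq\mc Z_M$ but need not lie in $\mc Z_M$ (you would need the reverse inclusion).

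For a concrete failure, take $\mc C=Sp^{A_4}$ and $A=F(A_4/V_4{}_+,S^0)$, so that $\tx{Mod}_{\mc C}(A)\simeq Sp^{V_4}$ with $C_3$ permuting the three order-two subgroups $C_2^{(i)}\subset V_4$, and take $M=\tilde{E}\mc F_{C_2^{(1)}}$. Then $\bigotimes_g{}^gf\simeq\tilde{E}\mc P$ while $\tx{cofib}(\bigotimes_g{}^ge_M\to S^0)\simeq\tilde{E}V_4$, and these have different Bousfield classes; here $e_{\bigotimes_g{}^gM}\simeq E\mc P_+$, which is visibly not $\tilde{E}\mc F_{C_2^{(1)}}$-acyclic, so $\mathds 1\to R$ is not an $\eta^*M$-equivalence. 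The fix is what the paper does: norm the left idempotent $e_M$ and use $\tx{cofib}(N(e_M)\to\mathds 1)$ as the right idempotent in $\mc L_A$. Note you cannot instead take the fiber of $\mathds 1\to N(f)$ and expect to recover $N(e_M)$, since the norm does not preserve cofiber sequences (compare \hyperref[3.15]{3.15}--\hyperref[3.17]{3.17}). With that substitution the remainder of your argument goes through and recovers the paper's proof.
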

\begin{proof}
For $X\in\mc C$, the composite
\[X\to L_{\eta^*M}(\mathds 1)\otimes X\to L_A(L_{\eta^*M}(\mathds 1)\otimes X)\]
becomes an equivalence after applying $-\otimes \eta^*M$. This is clear for the first map, and for the second map, for any $Y\in \mc C$, we have a commutative diagram
\[
\btz
Y\otimes \eta^*M\arrow[r]\arrow[d,"\simeq"]&L_A(Y)\otimes\eta^*M\arrow[d,"\simeq"]\\
\eta^*(\eta_*Y\otimes_AM)\arrow[r]&\eta^*(\eta_*(L_A(Y))\otimes_AM)
\etz
\]
by the projection formula. The bottom arrow is an equivalence because $\eta_*Y\to\eta_*(L_A(Y))$ is an equivalence by definition. It suffices now to show that $L_A(L_{\eta^*M}(\mathds 1)\otimes X)$ is $\eta^*M$-local in $\mc C$. 

Suppose we knew that $L_A(\eta^*M)$ determined a smashing Bousfield class in $\mc L_A$. Then if $Z\in\mc Z_{\eta^*M}$, we have $L_A(Z)\hat{\otimes}L_A(\eta^*M)\simeq*$, and
\[
\tx{Map}_{\mc C}(Z,L_A(L_{\eta^*M}(\mathds 1)\otimes X))\simeq\tx{Map}_{\mc L_A}(L_A(Z),L_A(L_{\eta^*M}(\mathds 1))\hat{\otimes} L_A(X))
\]
Since $L_A(\eta^*M)$-locals form a $\hat{\otimes}$-ideal in $\mc L_A$ by assumption, it would therefore suffice to show that $L_A(L_{\eta^*M}(\mathds 1))$ is $L_A(\eta^*M)$-local in $\mc L_A$. If $L_A(Z')\in\mc L_A$ is an $L_A(\eta^*M)$-$\hat{\otimes}$-acyclic, then
\[
\tx{Map}_{\mc L_A}(L_A(Z'),L_A(L_{\eta^*M}(\mathds 1)))\simeq\tx{Map}_{\mc C}(Z',L_{\eta^*M}(\mathds 1))\simeq*\]
The first equivalence is by adjunction and the fact that $\mc L_{\eta^*M}\subset\mc L_{A}$, as $\mc Z_{A}\subset\mc Z_{\eta^*M}$ by the projection formula. For the second, $Z'\otimes \eta^*M$ is $A$-local, as an $A$-module, hence 
\[Z'\otimes \eta^*M\simeq L_A(Z'\otimes\eta^*M)\simeq L_A(Z')\hat{\otimes}L_A(\eta^*M)\simeq*\]

We now show that $L_A(\eta^*M)$ determines a smashing Bousfield class in $\mc L_A$. Let $e_M,f_M\in \tx{Mod}_{\mc C}(A)$ denote the left and right idempotents corresponding to $M$, respectively. If $N(-)$ is the functor in \hyperref[6.9]{6.9}, we have that 
\[\ker(-\hat{\otimes}\tx{cofib}(N(e_M)\to\mathds 1))\]
is a smashing ideal in $\mc L_A$. It suffices to show that it coincides with $\ker(-\hat{\otimes}L_A(\eta^*M))$. Since $\eta_*$ is conservative on $\mc L_A$, we have
\aln{
Z\hat{\otimes}\tx{cofib}(N(e_M)\to\mathds 1)\simeq*&\iff \eta_*(Z\hat{\otimes}\tx{cofib}(N(e_M)\to\mathds 1))\simeq*\\
&\iff \eta_*(Z)\otimes_A\tx{cofib}(\eta_*N(e_M)\to\mathds 1)\simeq*\\
&\iff \eta_*(Z)\otimes_A\tx{cofib}(\bigotimes\limits_{g\in G}{^ge_M}\to\mathds 1)\simeq*\\
&\iff \eta_*(Z)\otimes_A\tx{cofib}(e_{\bigoplus\limits_{g\in G}{^gM}}\to \mathds 1)\simeq *\\
&\iff  \eta_*(Z)\otimes_A f_{\bigoplus\limits_{g\in G}{^gM}}\simeq*\\
&\iff  \eta_*(Z)\otimes_A \bigoplus\limits_{g\in G}{^gM}\simeq*\\
&\iff  \eta_*(Z)\otimes_A \eta_*\eta^*M\simeq*\\
&\iff \eta_*(Z\otimes \eta^*M)\simeq*\\
&\iff Z\hat{\otimes}\eta^*M\simeq*
}
The third equivalence is by definition of $N(-)$, the fourth and sixth follow as in \hyperref[2.12]{2.12}, and the seventh is \hyperref[6.8]{6.8}.
\end{proof}

\begin{remark}\label{6.12}
When $\mc C=Sp^G$ and $A=F(G/H_+,S^0)$ for $H\lhd G$, this recovers \hyperref[3.20]{3.20}. Moreover, the description of $L_A(-)$ in this case may be generalized: it follows from (\cite{MNN}, Proposition 2.21) that in the situation of \hyperref[6.11]{6.11}, we have the formula
\[L_A(Y)\simeq (Y\otimes A)^{hG}\simeq F(\mb D(A),Y)^{hG}\simeq F(\mb D(A)_{hG},Y)\]
and so the formula in \hyperref[6.11]{6.11} may be made more explicit:
\[L_{\eta^*M}(X)\simeq F(\mb D(A)_{hG},L_{\eta^*M}(\mathds 1)\otimes X)\] 
\end{remark}

\begin{remark}\label{6.13}
We may derive an analogous formula for a general quasi-Galois extension $\eta:R\to A$ in $\mc C$ such that $A$ is dualizable in $\tx{Mod}_{\mc C}(R)$: a smashing $R$-linear $A$-module $M\in\tx{Mod}_{\tx{Mod}_{\mc C}(R)}(A)$ determines a smashing localization in the category of $A$-locals in $\tx{Mod}_{\mc C}(R)$ corresponding to the Bousfield class of $\eta^*M$. The same proofs work since $R$ is the unit in $\tx{Mod}_{\mc C}(R)$.
\end{remark}

We have also in this setting a necessary and sufficient condition for $\eta^*M$ to be smashing in $\mc C$, i.e. a generalization of \hyperref[3.19]{3.19}. In \cite{MNN}, the role of the geometric fixed points functor is generalized to this setting as follows: consider the cofiber sequence
\[\mb D(A)\x{\mb D(\eta)}\mathds 1\x{a_A}C(A)\]
Define
\[U_A:=\colim(\mathds 1\x{a_A}C(A)\x{a_A}C(A)^{\otimes 2}\x{a_A}\cdots)\]
Then $U_A$ is a right idempotent in $\mc C$, and for any $X\in\mc C$, there is a homotopy pullback square
\[
\btz
X\arrow[r]\arrow[d]&X\otimes U_A\arrow[d]\\
L_A(X)\arrow[r]&L_A(X)\otimes U_A
\etz
\]

\begin{proposition}\label{6.14}
In the situation of \hyperref[6.11]{6.11}, $\eta^*M$ is smashing in $\mc C$ if and only if $L_{\eta^*M}(\mathds 1)\otimes U_A\simeq*$.
\end{proposition}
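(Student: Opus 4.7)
The plan is to establish the auxiliary fact that $\eta^*M$ is always $U_A$-acyclic, and then use this to handle both directions. To see that $A \otimes U_A \simeq *$, I would identify the map $A \otimes \mb D(A) \to A$ induced by $\mb D(\eta)$, via $A \otimes \mb D(A) \simeq F(A,A)$, with precomposition along $\eta$, i.e.\ $F(A,A) \to F(\mathds 1, A) = A$. Commutativity of $A$ supplies a splitting $a \mapsto (x \mapsto a \cdot x)$, so $A \otimes C(A)$ is the cofibre of a split surjection and hence contractible; therefore $A \otimes U_A \simeq \colim_n A \otimes C(A)^{\otimes n} \simeq *$. Since $M$ is an $A$-module, the action $A \otimes M \to M$ exhibits $M$ as a retract of $A \otimes M$ in $\mc C$, making $\eta^*M \otimes U_A$ a retract of $(A \otimes U_A) \otimes M \simeq *$. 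So $U_A \in \mc Z_{\eta^*M}$ unconditionally.

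The forward direction then follows immediately: if $\eta^*M$ is smashing, then by the analogue of characterization (6) of Proposition \hyperref[2.7]{2.7} in this setting, $\ip{\eta^*M} = \ip{L_{\eta^*M}(\mathds 1)}$, whence $\mc Z_{L_{\eta^*M}(\mathds 1)} = \mc Z_{\eta^*M}$ contains $U_A$.

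The backward direction is the substantive step and the expected main obstacle. Starting from $L_{\eta^*M}(\mathds 1) \otimes U_A \simeq *$, the first move is to promote this to $L_{\eta^*M}(Y) \otimes U_A \simeq *$ for \emph{every} $Y \in \mc C$. I would argue that $L_{\eta^*M}(\mathds 1)$ is an idempotent ring spectrum and $L_{\eta^*M}(Y)$ is naturally a module over it (hence a retract of $L_{\eta^*M}(\mathds 1) \otimes L_{\eta^*M}(Y)$ by unitality), and then smash the retract diagram with $U_A$ so that $L_{\eta^*M}(Y) \otimes U_A$ becomes a retract of $(L_{\eta^*M}(\mathds 1) \otimes U_A) \otimes L_{\eta^*M}(Y) \simeq *$. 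With this acyclicity in place, I would apply the pullback square to $X = L_{\eta^*M}(\mathds 1) \otimes Y$, using Theorem \hyperref[6.11]{6.11} to identify the lower-left corner with $L_{\eta^*M}(Y)$; both right-hand corners are now contractible, so the pullback forces the left vertical $L_{\eta^*M}(\mathds 1) \otimes Y \to L_{\eta^*M}(Y)$ to be an equivalence, which is exactly the smashing condition. The delicate point is this module-retract promotion, which is essentially Lemma \hyperref[2.5]{2.5} applied to the ring spectrum $L_{\eta^*M}(\mathds 1)$, and it is what makes acyclicity of the unit enough to close the argument.
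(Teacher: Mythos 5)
Your proof is correct and follows essentially the same route as the paper: the forward direction via the split cofiber sequence showing $A\otimes U_A\simeq *$ (hence $U_A\in\mc Z_{\eta^*M}$), and the backward direction via the $U_A$-pullback square combined with Theorem \hyperref[6.11]{6.11} and the observation that modules over the contractible ring $L_{\eta^*M}(\mathds 1)\otimes U_A$ are contractible. The only differences are cosmetic (you phrase the forward direction through Bousfield-class equality rather than the identification $L_{\eta^*M}(\mathds 1)\otimes U_A\simeq L_{\eta^*M}(U_A)$, and you separate out the module-retract promotion as a preliminary step).
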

\begin{proof}
If $\eta^*M$ is smashing, then $L_{\eta^*M}(\mathds 1)\otimes U_A\simeq L_{\eta^*M}(U_A)$, but $\mc Z_A\subset \mc Z_{\eta^*M}$, and in the cofiber sequence
\[\mb D(A)\otimes A\to A\to C(A)\otimes A\]
the first map splits via the map $A\to \mb D(A)\otimes A$ adjoint to the multiplication map $A\otimes A\to A$. Therefore the map $A\to C(A)\otimes A$ is null, and so
\[U_A\otimes A\simeq\colim(A\to C(A)\otimes A\to C(A)^{\otimes 2}\otimes A\to\cdots)\simeq*\]
Conversely, suppose $L_{\eta^*M}(\mathds 1)\otimes U_A\simeq*$, we will show that $L_{\eta^*M}(\mathds 1)\otimes X\in\mc L_{\eta^*M}$ for all $X\in \mc C$. As above, we have a pullback square
\[
\btz
L_{\eta^*M}(\mathds 1)\otimes X\arrow[r]\arrow[d]&L_{\eta^*M}(\mathds 1)\otimes X\otimes U_A\arrow[d]\\
L_A(L_{\eta^*M}(\mathds 1)\otimes X)\arrow[r]&L_A(L_{\eta^*M}(\mathds 1)\otimes X)\otimes U_A
\etz
\]
By \hyperref[6.11]{6.11}, we may identify the bottom row with the map $L_{\eta^*M}(X)\to L_{\eta^*M}(X)\otimes U_A$. By assumption, $L_{\eta^*M}(\mathds 1)\otimes U_A\simeq*$, so $L_{\eta^*M}(X)\otimes U_A$ is contractible as a module over $L_{\eta^*M}(\mathds 1)\otimes U_A$. Therefore the left hand arrow is an equivalence and the target is $\eta^*M$-local.
\end{proof}

\begin{corollary}\label{6.15}
In the situation of \hyperref[6.11]{6.11} $\eta^*M$ is smashing in $\mc C$ if and only if $L_{\eta^*M}(\mathds 1)$ is in the thick subcategory of $\mc C$ generated by $A$.
\end{corollary}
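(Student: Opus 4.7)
The plan is to combine Proposition 6.14 with a characterization of the subcategory of $U_A$-acyclic objects in $\mc C$. By 6.14, $\eta^*M$ is smashing in $\mc C$ if and only if $L_{\eta^*M}(\mathds 1)\otimes U_A\simeq *$, so it suffices to show that $L_{\eta^*M}(\mathds 1)\otimes U_A\simeq *$ if and only if $L_{\eta^*M}(\mathds 1)$ lies in the thick subcategory of $\mc C$ generated by $A$.

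For the ``if'' direction, I would observe that the full subcategory
\[\mc T=\{X\in\mc C\st X\otimes U_A\simeq*\}\]
is closed under retracts, cofiber sequences, and shifts, and hence is thick. The computation in the proof of 6.14 established $A\otimes U_A\simeq*$: the cofiber sequence $\mb D(A)\otimes A\to A\to C(A)\otimes A$ splits because the map $A\to\mb D(A)\otimes A$ adjoint to the multiplication $A\otimes A\to A$ retracts the first arrow, forcing the second to be null, and then the colimit defining $U_A\otimes A$ collapses to $*$. Thus $A\in\mc T$, so $\mc T$ contains the thick subcategory generated by $A$.

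For the ``only if'' direction, which is the main content, I would exhibit $L_{\eta^*M}(\mathds 1)$ as finitely built from $A$ under the hypothesis $L_{\eta^*M}(\mathds 1)\otimes U_A\simeq*$. The plan is to first use Remark 6.12 to note that $L_A(\mathds 1)\simeq F(\mb D(A)_{hG},\mathds 1)$, and since $G$ is finite, $\mb D(A)_{hG}$ is a finite iterated extension of shifts of $\mb D(A)$, which is itself dualizable because $A$ is. Dualizing, $L_A(\mathds 1)$ lies in the thick subcategory of $\mc C$ generated by $A$. In the cofiber sequence
\[N(e_M)\to L_A(\mathds 1)\to L_{\eta^*M}(\mathds 1)\]
of the idempotent triangle in $\mc L_A$ from the proof of 6.11, it therefore suffices to place $N(e_M)$ in the thick subcategory generated by $A$. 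Here the hypothesis $L_{\eta^*M}(\mathds 1)\otimes U_A\simeq*$ enters: combined with the quasi-Galois structure, it forces the sequential colimit defining $U_A$ to truncate at a finite stage when smashed with $L_{\eta^*M}(\mathds 1)$, and hence exhibits $N(e_M)$ (which is, up to shift, related to the $G$-norm of $e_M$ in $\mrm{Mod}_{\mc C}(A)$) as finitely built out of $A$.

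The main obstacle is precisely this last step: converting the colimit-vanishing condition $L_{\eta^*M}(\mathds 1)\otimes U_A\simeq *$ into a finite-presentation statement. This is the content of a form of the nilpotence theorem (in the style of Mathew--Naumann--Noel), and I expect the proof to either cite or adapt their argument that, for a dualizable algebra $A$, objects annihilated by $U_A$ coincide with objects built finitely from $A$. The quasi-Galois hypothesis and the finiteness of $G$ make this manageable and turn the general MNN argument into a concrete finite manipulation in our setting.
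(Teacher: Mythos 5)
Your reduction via \hyperref[6.14]{6.14} and your proof of the ``if'' direction are both correct, and your instinct that the hard direction is a Mathew--Naumann--Noel-style nilpotence statement is exactly right: the paper's entire proof is to combine \hyperref[6.14]{6.14} with Theorem 4.19 of \cite{MNN}, which says that for a dualizable commutative algebra $A$, an object $X$ satisfies $X\otimes U_A\simeq *$ if and only if $X$ is $A$-nilpotent. So at the level of strategy you have located the paper's argument.

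However, the specific route you sketch for the ``only if'' direction contains a step that fails. You assert that, since $G$ is finite, $\mb D(A)_{hG}$ is a finite iterated extension of shifts of $\mb D(A)$, and hence that $L_A(\mathds 1)\simeq F(\mb D(A)_{hG},\mathds 1)$ always lies in the thick subcategory generated by $A$. Homotopy orbits over a finite group are a colimit over $BG$, which is not a finite colimit, and the conclusion is false in general: taking $\mc C=Sp^{C_2}$ and $A=F({C_2}_+,S^0)\simeq {C_2}_+$, one has $L_A(\mathds 1)=F(E{C_2}_+,S^0)$, whose geometric fixed points are $(S^0)^{tC_2}\not\simeq *$, whereas every object of the thick subcategory generated by ${C_2}_+$ has contractible geometric fixed points. (Indeed, by the very corollary you are proving, applied to $M=A$, the object $L_A(\mathds 1)$ lies in the thick subcategory generated by $A$ precisely when $A$ itself is smashing in $\mc C$, which fails here by \hyperref[3.22]{3.22}.) Consequently the cofiber sequence $N(e_M)\to L_A(\mathds 1)\to L_{\eta^*M}(\mathds 1)$ cannot be used to transfer the problem to $N(e_M)$. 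The repair is simply to apply MNN's Theorem 4.19 directly to the single object $L_{\eta^*M}(\mathds 1)$, with no intermediate decomposition: the hypothesis $L_{\eta^*M}(\mathds 1)\otimes U_A\simeq *$ is exactly their criterion for $A$-nilpotence of that object.
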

\begin{proof}
This follows immediately from \cite{MNN}, Theorem 4.19.
\end{proof}

\begin{example}\label{6.16}
Let $H\lhd G$ be a closed normal subgroup of finite index in a compact Lie group $G$. If $E\in Sp^H$ is smashing, then
\[L_{G_+\smsh_HE}(X)=F({E{\mc F}_H}_+,L_E(S^0)\smsh X)\]
for all $X\in Sp^G$.
\end{example}
\begin{proof}
In \cite{MNN}, pg. 29, it is noted that the the analog of \hyperref[4.1]{4.1} (and its $\infty$-categorical refinement) hold in this setting, and the formula now follows from \hyperref[6.12]{6.12}.
\end{proof}

\begin{example}\label{6.17}
In fact, our above arguments may be used to show the analogous version of 3.20 for induced localizations holds for any of the equivariant tt-categories studied in \cite{BDS}.
\end{example}

\bibliographystyle{plain}

\end{document}